\definecolor{lightblue}{rgb}{0.22,0.45,0.70}
\definecolor{mygray}{rgb}{0.7,0.7,0.7}
\definecolor{green1}{rgb}{0.09, 0.45, 0.27}
\definecolor{lilac}{rgb}{0.784, 0.635, 0.784}
\newtheorem{theorem}{Theorem}[section]
\newtheorem{remark}{Remark}[section]
\newtheorem{lemma}{Lemma}[section]
\numberwithin{equation}{section}
\numberwithin{theorem}{section}
\numberwithin{figure}{section}
\numberwithin{table}{section}
\numberwithin{remark}{section}
\numberwithin{lemma}{section}
\numberwithin{corollary}{section}
\definecolor{lightblue}{rgb}{0.22,0.45,0.70}
\definecolor{lightgreen}{rgb}{0.22,0.50,0.25}
\newcommand\cero{\boldsymbol{0}}
\newcommand{\norm}[1]{\left\|#1\right\|}
\newcommand\vdiv{\mathop{\mathrm{div}}\nolimits}
\newcommand\curl{\mathop{\mathrm{curl}}\nolimits}
\newcommand\bcurl{\mathop{\mathbf{curl}}\nolimits}
\newcommand\dist{\mathop{\mathrm{dist}}\nolimits}
\newcommand{\bS}{\mathbf{S}}
\newcommand{\bI}{\mathbf{I}}
\newcommand{\bu}{\boldsymbol{u}}
\newcommand{\bg}{\boldsymbol{g}}
\newcommand{\bn}{\boldsymbol{n}}
\newcommand{\bv}{\boldsymbol{v}}
\newcommand{\bw}{\boldsymbol{w}}
\newcommand{\bx}{\boldsymbol{x}}
\newcommand{\bz}{\boldsymbol{z}}
\newcommand{\br}{\mathbf{r}}
\newcommand{\be}{\boldsymbol{e}}
\newcommand{\ff}{\boldsymbol{f}}
\renewcommand{\gg}{\boldsymbol{g}}
\newcommand{\bnabla}{\boldsymbol{\nabla}}
\newcommand{\btau}{\boldsymbol{\tau}}
\newcommand{\bchi}{\boldsymbol{\chi}}
\newcommand\cA{\mathcal{A}}
\newcommand\cD{\mathcal{D}}
\newcommand\cT{\mathcal{T}}
\newcommand\cE{\mathcal{E}}
\newcommand\cF{\mathcal{F}}
\newcommand\cS{\mathcal{S}}
\newcommand\cP{\mathcal{P}}
\newcommand\bB{\mathbf{B}}
\newcommand{\ds}{\displaystyle}
\def\qan{{\quad\hbox{and}\quad}}
\def\wt{\widetilde}
\newcommand\bV{\mathbf{V}}
\newcommand\bH{\mathbf{H}}
\newcommand\bL{\mathbf{L}}
\newcommand\rL{\mathrm{L}}
\newcommand\bW{\mathbf{W}}
\newcommand\bZ{\mathbf{Z}}
\newcommand\rH{\mathrm{H}}
\newcommand\rQ{\mathrm{Q}}
\newcommand\rY{\mathrm{Y}}
\newcommand\bgamma{\boldsymbol{\gamma}}
\newcommand\bDelta{\boldsymbol{\Delta}}
\newcommand\bomega{\boldsymbol{\omega}}
\newcommand\bzeta{\boldsymbol{\zeta}}
\renewenvironment{proof}{\noindent{\it Proof.}}{\hfill$\square$}
\title{Numerical analysis of a porous natural convection system with vorticity and viscous dissipation\thanks{\textbf{Updated:} \today. \textbf{Funding:} This work has been partially supported by the Monash Mathematics Research Fund S05802-3951284; by the Australian Research Council through the \textsc{Future Fellowship} grant FT220100496 and \textsc{Discovery Project} grant DP22010316; and by the National Research and Development Agency (ANID) of the Ministry of Science, Technology, Knowledge and Innovation of Chile through the postdoctoral program \textsc{Becas Chile} grant 74220026.}}
\author{
Russel Demos\thanks{School of Mathematics, Monash University, 9 Rainforest Walk, Melbourne 3800 VIC, Australia. Email: \texttt{russel.demos@monash.edu}.}
\and 
Rashmi Dubey\thanks{School of Engineering, University of Petroleum and Energy Studies, Bidholi Campus, Dehradun, Uttarakhand 248007, India. Email: \texttt{rashmi.dubey@ddn.upes.ac.in}.}
\and 
Ricardo Ruiz-Baier\thanks{(Corresponding author). School of Mathematics, Monash University, 9 Rainforest Walk, Melbourne 3800 VIC, Australia; and Universidad Adventista de Chile, Casilla 7-D Chill\'an, Chile. Email: \texttt{ricardo.ruizbaier@monash.edu}.} 
\and 
Segundo Villa-Fuentes\thanks{School of Mathematics, Monash University, 9 Rainforest Walk, Melbourne 3800 VIC, Australia. Email: \texttt{segundo.villafuentes@monash.edu}.}
}
\date{}
\begin{document}
\maketitle

\begin{abstract}
\noindent 
In this paper we propose and analyse a new formulation and  pointwise divergence-free mixed finite element methods for the numerical approximation of Darcy--Brinkman equations in vorticity--velocity--pressure form, coupled with a transport equation for thermal energy with viscous dissipative effect and mixed Navier-type boundary conditions. The solvability analysis of the continuous and discrete problems is significantly more involved than usual as it hinges on Banach spaces needed to properly control the advective and dissipative terms in the non-isothermal energy balance equation. We proceed by decoupling the set of equations and use the Banach fixed-point theorem in combination with the abstract theory for perturbed saddle-point problems. Some of the necessary estimates are straightforward modifications of well-known results, while other technical tools require a more elaborated analysis. The velocity is approximated by Raviart--Thomas elements, the vorticity uses N\'ed\'elec spaces of the first kind, the pressure is approximated by piecewise polynomials, and the temperature by continuous and piecewise polynomials of one degree higher than pressure. Special care is needed to establish discrete inf-sup conditions since the curl of the discrete vorticity is not necessarily contained in the discrete velocity space, therefore suggesting to use two different Raviart--Thomas interpolants. A discrete fixed-point argument is used to show well-posedness of the Galerkin scheme. Error estimates in appropriate norms are derived, and a few representative numerical examples in 2D and 3D and with mixed boundary conditions are provided. 
\end{abstract}

\noindent
{\bf Key words}:  Flow--transport coupling; Highly permeable porous media; Vorticity-based formulation; Mixed finite element methods; Analysis in Banach spaces; Viscous dissipation. 

\smallskip\noindent
{\bf Mathematics subject classifications (2000)}:  65N30, 65N15, 35Q30, 35K05.

\maketitle

\section{Introduction}
\paragraph{Scope.} The interplay of vorticity and viscous dissipation in natural convection within high permeability porous media results in intricate flow and thermal dynamics. Vorticity enhances mixing and improves heat transfer efficiency, while viscous dissipation raises the local temperature, potentially altering the overall thermal gradient that drives convection. To accurately predict the system performance and design effective solutions, these interactions need to be thoroughly understood and appropriately modelled. We consider the numerical analysis of the coupled Brinkman equations with temperature including a viscous dissipation term. In fluid saturated porous domains with relatively large permeabilities (as in the regime where Brinkman equations hold), viscous dissipation effects within the fluid are typically non-negligible as discussed in \cite{dubey19b}. In the case of clear fluid, the viscous dissipation term typically includes a nonlinearity in the form of the velocity scaled by its own Laplacian. In contrast, the type of viscous dissipation used here (and more appropriate for flow in porous media) has a nonlinear term depending on the square filtration velocity modulus and it represents the power per unit volume generated by the dissipation (see also \cite{barletta21}). Some of the common features of viscous dissipation (acting as an internal heat generation mechanism) include the onset of unstable temperature gradients and secondary buoyant flow patterns that are not induced by external thermal forcing terms. 

The formulation of incompressible viscous flow equations using vorticity (or  microrotation), velocity and pressure has been used and analysed extensively in, e.g.,  \cite{amara07,amara04,anaya16,anaya19,bernardi06,chang90,duan03,dubois03,caraballo24,hanot23,karlsen11,olshanskii15,salaun15,tsai05}. Later on, models for coupled advection-diffusion equations and vorticity-based viscous flow formulations have been introduced in \cite{anaya18,lenarda17} (see also the recent contribution \cite{gao24}).  For the analysis in this case one needs to require higher  regularity of the Brinkman filtration velocity, for example. One can also follow \cite{bernardi18} (designed for the coupling of thermal energy equation and Darcy flow in mixed form) treating the advective term with a duality argument and invoking continuity and compactness of suitably chosen operators, and requiring further that the concentration-dependent permeability is Lipschitz continuous and uniformly bounded away from zero. Alternatively to those approaches, here we follow the series of works  \cite{girault90,caucao23,correa22,gatica22} (see also the references therein) where the functional structure of the problem is generalised to Banach spaces. This strategy has a number of advantages, such as relaxing the regularity assumptions and  not needing to include augmented Lagrangian terms, but it exhibits a more involved functional structure. This approach requires to find suitable Banach spaces that would allow us to show boundedness of all linear and nonlinear forms. We herein work with vorticity in the space $\bH(\bcurl_s,\Omega)$ (vector fields in $\bL^2(\Omega)$ whose curl is in $\bL^s(\Omega)$) and velocity in the space $\bH^r(\vdiv,\Omega)$ (vector fields in $\bL^r(\Omega)$ whose divergence is in $\rL^2(\Omega)$), with $\frac1r+\frac1s = 1$. 

We use a fixed-point approach to separate the Brinkman and viscous dissipated heat equations, and the  functional framework for the Brinkman equations requires the analysis of a perturbed saddle-point problem. The required inf-sup conditions necessitate non-standard regularity properties of auxiliary boundary value problems not typically available in the literature.  For vector potentials it is possible to use the elegant theory from the series of papers  \cite{amrouche98,amrouche21,amrouche13}  (see also \cite{poirier20}), which holds for any Lebesque exponent larger than one. However, the assumptions that lead to that theory do not apply to our case since the two sub-boundaries in our domain meet. Therefore we appeal to the works in \cite{mazya07,mazya09} and use a new auxiliary problem in the range of admissible Lebesgue exponents. 

Another distinctive feature of the present paper appears at the discrete level. As a consequence again of the functional structure, one of the issues is that the curl of the discrete vorticity is not in the same space as the discrete velocity. We then need to define two Raviart--Thomas type interpolators and use commutativity properties in the $\bH(\bcurl)$ space associated with vorticity and the extra Raviart--Thomas space. The approach used here might be of benefit for the analysis of other systems with similar Banach structure.

\paragraph{Plan of the paper.} The rest of the manuscript has been organised as follows. Notations and basic definitions to be utilised throughout the paper are collected in the remainder of this section. Section \ref{sec:model} states the strong form of the coupled problem in its classical form and also in terms of vorticity. There we also give a weak formulation. 
The well-posedness analysis of the continuous weak problem is developed in Section~\ref{sec:wellp} using a fixed-point approach. A finite element  method is defined in Section~\ref{sec:FE}, where we also derive the analysis of existence and uniqueness of solution to the discrete problem.  In Section~\ref{sec:error} we recall interpolation properties of specifies finite element subspaces, and derive a general C\'ea estimate. 
Section \ref{sec:results} is devoted to showing numerically the optimal convergence of the method and we perform some benchmark tests to further validate the proposed computational methods. 

\paragraph{Preliminaries and recurrent notation.} 
We will adopt standard terminology for Sobolev spaces and norms. Throughout the text, given a normed space $S$, by the boldface symbol $\bS$ we will denote the vector  extension $S^d$. If $S$ is a generic Banach space we denote its dual space by $S'$. The continuous and surjective linear trace map is denoted as $\gamma:\rH^1(\Omega)\to \rH^{1/2}(\partial \Omega)$ for which when $u$ is continuous then $\gamma(u) = u|_{\partial\Omega}$. The boldface $\bgamma$ will denote the vector-valued counterpart of $\gamma$. Similarly, the normal and tangential trace operators are denoted by 
$\gamma_{\bn}:\bH(\vdiv,\Omega)\to \rH^{-1/2}(\partial\Omega)$ and $\bgamma_{\btau}:\bH(\bcurl,\Omega)\to \bH^{-1/2}(\partial\Omega)$, respectively.  We employ $\cero$ to denote a generic null vector, and follow the convention that $C$, with or without subscripts, denotes a generic positive constant independent of the discretisation parameters, which may take different values at different instances. We use the notation $A \lesssim B$ for the inequality $A \leq CB$, where $A$ and $B$ are two scalar fields, and $C$ is a generic constant.

\section{Model problem and its weak formulation}\label{sec:model}
\subsection{Natural convection with viscous dissipation}
Let us consider a simply connected bounded and Lipschitz domain $\Omega\subset\mathbb{R}^d$, with $d \in \{2,3\}$ occupied by an incompressible fluid in a non-isothermal regime and moving within a fully saturated porous media. 
The domain boundary $\partial\Omega$ 
is partitioned into disjoint sub-boundaries where slip and tangential vorticity trace conditions are imposed  $\partial\Omega:= \overline{\Gamma} \cup \overline{\Sigma}$, $\overline{\Gamma}\cap \overline{\Sigma} = \emptyset$, and it is assumed for sake of simplicity that both sub-boundaries are non-empty $|\Gamma|\cdot|\Sigma|>0$. 
For a sufficiently smooth external body force $\ff:\Omega\to\mathbb{R}^d$ and external heat source $g:\Omega \to \mathbb{R}$, we consider the following form of the equations of steady natural convection in their  velocity--pressure--temperature form and using dimensional units (see, for example, \cite{dubey19} but here not taking into account the Rayleigh dissipation)  
\begin{subequations}\label{eq:coupled}
\begin{align}
\frac{\mu}{\kappa}\bu -\mu' \bDelta \bu + \nabla p & = \rho \ff(T) & \quad \text{in $\Omega$}, \\
\vdiv\bu & =0  & \quad \text{in $\Omega$}, \\
\sigma_0  T  + \bu\cdot\nabla T - \alpha \Delta T & = g +  \frac{\mu}{\kappa c'\rho} \bu\cdot\bu 
& \quad \text{in $\Omega$},
\end{align}
stating the balance of linear momentum and of mass, and the balance of energy including the viscous dissipation  function on the right-hand side, which is proportional to the square modulus of the seepage velocity (the kinetic energy, see \cite{rees17}).  
While viscous dissipation is a significant factor in driving convection inside the fluid-saturated porous medium, fluid phase and solid phase of the porous medium are assumed to be in local thermal equilibrium. In high-permeability porous media, the flow behaviour becomes more complex due to the intensified interaction between the fluid and the porous matrix. Such an interaction is significantly affected by, e.g., vorticity patterns and viscous dissipation, which are crucial in shaping the convection patterns and thermal distribution within the medium. 

We also consider the following set of boundary conditions representing zero tangential traces of vorticity and slip velocity (the so-called Navier boundary condition) together with insulated boundaries on the component $\Gamma$; while non-tangential flow velocity, vanishing  pressure, and fixed temperature are prescribed on the remainder of the boundary: 
\begin{align}
\bcurl \bu\times \bn  = \cero, \quad \bu\cdot \bn  = 0  \quad \text{and} \quad  \alpha\nabla T \cdot\bn = 0& \quad \text{on $\Gamma$},\label{bc:Gamma}\\
\bu \times \bn = \cero, \quad p = 0 \quad \text{and} \quad T = 0  & \quad \text{on $\Sigma$}\label{bc:Sigma}.
\end{align}
\end{subequations}
The model coefficients are the  fluid mass density $\rho$, the dynamic  viscosity $\mu$, the effective dynamic viscosity in the porous layer  $\mu'$,  the permeability of the porous medium $\kappa$, specific heat capacity per unit mass of the fluid  $c'$, heat capacity ratio $\sigma_0$, and the thermal diffusivity $\alpha$. The  linear Oberbeck--Boussinesq approximation is considered, and the exerted force due to changes in temperature is linearly temperature-dependent 
  \[\ff(T) = -\beta(T - T_0) \bg,\]
  with $\beta$ a positive constant (thermal expansion coefficient), $T_0$ a reference temperature, and $\bg$ gravity. 

Problem  \eqref{eq:coupled} can be equivalently set in terms of  velocity, vorticity, pressure, and temperature. For this we rewrite the momentum and energy balance equations using the rescaled vorticity vector
\[ \bomega := \sqrt{\mu'} \bcurl \bu,\]
and then use the vector identity 
\begin{equation*}
\bcurl \bcurl \bu = -\boldsymbol{\Delta}\bu + \nabla(\vdiv \bu),\end{equation*}
together with the incompressibility constraint and the fact that the apparent viscosity is assumed constant, which leads to the following set of equations 
\begin{subequations}\label{eq:ns-new}
\begin{align}
\bomega - \sqrt{\mu'} \bcurl \bu& = \cero & \quad \text{in $\Omega$}, \\
\frac{\mu}{\kappa}\bu+ \sqrt{\mu'} \bcurl\bomega +   \nabla p &  =\rho \ff(T)& \quad \text{in $\Omega$}, \\ 
\vdiv\bu & =0  & \quad \text{in $\Omega$}, \\
\sigma_0  T  + \bu\cdot\nabla T - \alpha \Delta T & = g+ \frac{\mu}{\kappa c'\rho} |\bu|^2  
& \quad \text{in $\Omega$},
\end{align}\end{subequations}
and the compatible vorticity trace boundary condition (the first relation in \eqref{bc:Gamma}) reads now $\bomega \times \bn = \cero$ on $\Gamma$.

\subsection{Weak formulation}
We start by considering the real numbers $r>1,s>1$ and we recall the definition of the following functional spaces 
\begin{gather*}
 \bH(\bcurl_{s},\Omega)=\left\{\bzeta \in \bL^2(\Omega): \bcurl\bzeta \in \bL^{s}(\Omega) \right\}, \qquad   \bH^r(\vdiv,\Omega)=\left\{\bv \in \bL^r(\Omega): \vdiv\bv \in \rL^2(\Omega) \right\}, \\
 \rH^1(\Omega) = \{\psi\in \rL^2(\Omega): \nabla\psi \in \bL^2(\Omega)\}, 
  \end{gather*}
equipped with the following  norms 
\begin{gather}\nonumber  \norm{\bzeta}_{\bcurl_s,\Omega}:=\norm{\bzeta}_{0,\Omega}+\norm{\bcurl\bzeta}_{\bL^s(\Omega)},\qquad \norm{\bv}_{r,\vdiv,\Omega}:=\norm{\bv}_{\bL^r(\Omega)}+\norm{\vdiv\bv}_{0,\Omega},\\
\norm{S}^2_{1,\Omega}:=\norm{S}^2_{0,\Omega}+\norm{\nabla S}^2_{0,\Omega},
   \label{eq:norms} \end{gather}
respectively. The first two spaces are Banach and the third space is Hilbert with respect to the  respective graph topology. 
In view of the boundary conditions, we also define the  following closed subspaces 
\begin{align*}
 \bH_\star(\bcurl_s,\Omega)& := \{\bzeta\in \bH(\bcurl_s,\Omega): \bgamma_{\btau}\bzeta = \cero \text{ on } \Gamma\},\nonumber\\
  \bH^r_\star(\vdiv,\Omega) & := \{\bv\in \bH^r(\vdiv,\Omega): \gamma_{\bn}\bv = 0 \text{ on } \Gamma\}, \\
  \rH^1_\star(\Omega) &:=\{ S \in \rH^1(\Omega): \gamma S = 0 \text{ on } \Sigma\}, \nonumber
   \end{align*}
 where the boundary specification is to be understood in the sense of traces restricted to sub-boundaries.  
Note that if $\bzeta \in \bH(\bcurl_s,\Omega)$ then its tangential component $\bgamma_{\btau}\bzeta$ is defined in the trace space $\bW^{-1/s,s}(\partial\Omega)$ and if $\bv \in \bH^r(\vdiv,\Omega)$ then its normal component  $\gamma_{\bn}\bv$ is  defined in $\mathrm{W}^{-1/r,r}(\partial\Omega)$. 

We proceed to multiply the momentum balance, constitutive, mass, and energy equations by suitable test functions and to integrate by parts over the domain. Note that for the divergence-based terms we use the following form of the Gauss formula conveniently extended to the case of Banach spaces (see, e.g., \cite[Section 2.2]{caucao20b}): 
\[ \int_\Omega \bv\cdot \nabla q = - \int_\Omega \vdiv \bv\, q + \langle \gamma_{\bn}\bv, \gamma q \rangle \qquad \forall \bv\in\bH^t(\vdiv,\Omega),q \in \mathrm{W}^{1,t'}(\Omega),\]
where $\langle \cdot,\cdot\rangle$ denotes the duality product between $\mathrm{W}^{-1/t,t}(\partial\Omega)$ and  $\mathrm{W}^{1/t,t'}(\partial\Omega)$, with $1<t<\infty$. Similarly,  
 for curl-based terms we use a   generalisation of  \cite[Theorem 2.11]{girault79} (see, e.g., \cite[Section 2]{amrouche21}):  
\[ \int_\Omega \bcurl\bv\cdot \bzeta = \int_\Omega \bv\cdot \bcurl\bzeta + \langle \bgamma_{\btau}\bzeta, \bgamma\bv\rangle \qquad \forall \bzeta\in\bH(\bcurl_t,\Omega),\bv\in \bW^{1,t'}(\Omega),\]
where $\langle \cdot,\cdot\rangle$ denotes the pairing between $\bW^{-1/t,t}(\partial\Omega)$ and its dual $\bW^{1/t,t'}(\partial\Omega)$. 

We then arrive at the following weak formulation for \eqref{eq:ns-new}: for given $\bg\in \bL^\infty(\Omega)$ and $g\in L^2(\Omega)$,  find $(\bomega,\bu,p,T)\in \bH_\star(\bcurl_s,\Omega)\times  \bH^r_\star(\vdiv,\Omega)\times \rL^2(\Omega)  \times \rH_\star^1(\Omega)$ such that 
\begin{subequations}\label{eq:weak} 
\begin{align}
\int_\Omega \bomega \cdot\bzeta  - \sqrt{\mu'}\int_\Omega \bu \cdot \bcurl \bzeta 
& = 0\qquad  \forall \bzeta\in {\bH_\star(\bcurl_s,\Omega)} ,\\
- \sqrt{\mu'} \int_\Omega \bcurl\bomega \cdot \bv - \frac{\mu}{\kappa} \int_\Omega  \bu \cdot \bv  + \int_\Omega p \vdiv \bv & = 
- \rho \int_\Omega \ff(T)\cdot\bv \qquad  \forall \bv\in{\bH^r_\star(\vdiv,\Omega)}, \\
\int_\Omega q\vdiv\bu &= 0\qquad  \forall q\in {\rL^2(\Omega)}, \\
\sigma_0 \int_\Omega T\,S + \int_\Omega (\bu \cdot \nabla T)\,S  
+\alpha \int_\Omega \nabla T \cdot \nabla S    
& = \int_\Omega g \,S +   \frac{\mu}{\kappa c'\rho} \int_\Omega |\bu|^2 \,S 
\qquad \forall  S \in {\rH^1_\star(\Omega)},
\end{align}\end{subequations}
where we have also used the boundary conditions \eqref{bc:Gamma}--\eqref{bc:Sigma}. 

For $\bomega,\bzeta\in \bH_\star(\bcurl_s,\Omega)$, $\bu,\bv \in \bH^r_\star(\vdiv,\Omega)$, $p,q\in\rL^2(\Omega)$,  $T,S \in \rH^1_\star(\Omega)$, we define the following bilinear and trilinear forms, as well as the (affine,linear)-form $F$ and linear functional $G$:
\begin{gather*}
a_1(\bomega,\bzeta) : = \int_\Omega \bomega \cdot\bzeta,
 \qquad
b_1(\bzeta,\bv) : =   - \sqrt{\mu'} \int_\Omega \bcurl\bzeta \cdot \bv, \qquad
b_2(\bv,q) : =  \int_\Omega q\vdiv \bv, \\
a_2 (\bu,\bv) := \frac{\mu}{\kappa} \int_\Omega  \bu \cdot \bv, \quad 
a_3(T,S):=\sigma_0 \int_\Omega T\,S + \alpha \int_\Omega \nabla T \cdot \nabla S  , \quad 
c_1(\bv;T,S):= \int_\Omega (\bv \cdot \nabla T )S ,  \\ 
c_2(\bu,\bv;S): =   \frac{\mu}{\kappa c'\rho} \int_\Omega (\bu\cdot\bv) \,S, \qquad 
F(S;\bv):= \rho \beta \int_\Omega (S-T_0)\bg \cdot\bv, \qquad 
G(S):= \int_\Omega g\,S, 
  \end{gather*}
  and denote by 
 $\bB_i$ and $\bB_i^*$ the operators induced by the bilinear form $b_i(\cdot,\cdot)$, $i=1,2$: 
\begin{align*}
\bB_1&:\bH_\star(\bcurl_s,\Omega) \to [\bH_\star^r(\vdiv,\Omega)]', \; \langle \bB_1(\bzeta),\bv\rangle := b_1(\bzeta,\bv), \; 
 \bB_1^*:\bH_\star^r(\vdiv,\Omega)\to [\bH_\star(\bcurl_s,\Omega)]',\\
 \bB_2&: \bH^r_\star(\vdiv,\Omega)\to [\rL^2(\Omega)]', \quad  \langle \bB_2(\bv),q\rangle := b_2(\bv,q), \quad 
 \bB_2^*:\rL^2(\Omega) \to [\bH^r_\star(\vdiv,\Omega)]'.
\end{align*} 
The system \eqref{eq:weak} is rewritten as follows: find the tuple $(\bomega,\bu,p,T)\in \bH_\star(\bcurl_s,\Omega) \times \bH^r_\star(\vdiv,\Omega)\times \rL^2(\Omega)  \times \rH_\star^1(\Omega)$ such that 
\begin{subequations}\label{eq:weak2}
\begin{align}
a_1(\bomega,\bzeta) + b_{1}(\bzeta,\bu)&=\;0&\qquad\forall\bzeta\in {\bH_\star(\bcurl_s,\Omega)} \label{eq:weak2-1}  ,\\
 b_1(\bomega,\bv)- a_2(\bu,\bv) + b_2(\bv,p) &=\;F(T;\bv)&\qquad\forall \bv\in{\bH^r_\star(\vdiv,\Omega)}, \\
b_2(\bu,q)&=\;0&\qquad\forall q\in {\rL^2(\Omega)},\label{eq:weak2-3}\\
a_3(T,S)+c_1(\bu;T,S)-c_2(\bu,\bu;S) 
& =\; G (S)&\qquad \forall S\in {\rH_\star^1(\Omega)}.\label{eq:weak2-4}
\end{align}
\end{subequations}

\section{Unique solvability analysis of the continuous formulation}\label{sec:wellp}
The well-posedness analysis of \eqref{eq:weak2} shall use Banach fixed-point theory. To do so we will separate the Darcy--Brinkman equations \eqref{eq:weak2-1}--\eqref{eq:weak2-3} (and will consider a reduced problem in the Kernel of $\bB_2$), from the energy equation \eqref{eq:weak2-4}. 
\subsection{Preliminaries}
Given an arbitrary $t \in (1,\infty)$, consider for each $\bz\in \bL^t(\Omega)$ the function
\begin{equation}\label{def-cal-J-t}
\mathcal J_t(\bz) \,:=\, 
\left\{
\begin{array}{cl}
\displaystyle |\bz|^{t-2}\,\bz & \quad\hbox{if}\,\, \bz \not = \mathbf 0 \,,\\[1ex]
\displaystyle \mathbf 0 & \quad \hbox{otherwise}. 
\end{array}
\right.
\end{equation}
The following two technical results from  \cite{gatica22} provide a useful pairing property, and an appropriate relationship between divergence-free vector fields, respectively.  The second result is here adapted to the case of mixed boundary conditions. 
\begin{lemma}\label{lem-p-q}
Let $t, \, t' \in (1, \infty)$ be such that $\frac1t + \frac{1}{t'} = 1$. Then, for each $\bz\in \bL^t(\Omega)$
there hold 
\begin{subequations}
\begin{gather}\label{eq-lem-p-q-1}
\bw  \,:=\, \mathcal J_t(\bz) \,\in\, \bL^{t'}(\Omega)\,, \qquad \bz \,=\, \mathcal J_{t'}(\bw)\,, \qan \\
\label{eq-lem-p-q-2}
\int_\Omega \bz \cdot \bw = \|\bz\|^t_{\bL^t(\Omega)}
= \|\bw\|^{t'}_{\bL^{t'}(\Omega)} 
= \|\bz\|_{\bL^t(\Omega)} \, \|\bw\|_{\bL^{t'}(\Omega)},
\end{gather}\end{subequations}
and therefore $\mathcal J_t : \bL^t(\Omega) \to \bL^{t'}(\Omega)$
and  $\mathcal J_{t'} : \bL^{t'}(\Omega) \to \bL^t(\Omega)$ are bijective and inverse to each other.
\end{lemma}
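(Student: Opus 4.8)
The plan is to recognise $\mathcal{J}_t$ as (a form of) the duality map between $\bL^t(\Omega)$ and $\bL^{t'}(\Omega)$, and to establish every assertion through pointwise identities that are afterwards integrated over $\Omega$. Since $\mathcal{J}_t(\bz)$ vanishes exactly where $\bz$ does, the set $\{\bz = \mathbf{0}\}$ contributes nothing to any integral and renders all claimed identities trivially true there; I would therefore perform all computations on the complementary set $\{\bz \neq \mathbf{0}\}$, first noting that $\bw = \mathcal{J}_t(\bz)$ is measurable, being the composition of the measurable field $\bz$ with continuous scalar functions.

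The crux is a short exponent computation. From the definition $\bw = |\bz|^{t-2}\bz$ in \eqref{def-cal-J-t} I would record the pointwise norm identity $|\bw| = |\bz|^{t-1}$, whence $|\bw|^{t'} = |\bz|^{(t-1)t'} = |\bz|^t$, because the conjugacy relation $\frac1t+\frac1{t'}=1$ gives $(t-1)t'=t$. Integrating this identity shows simultaneously that $\bw \in \bL^{t'}(\Omega)$ and that $\norm{\bw}^{t'}_{\bL^{t'}(\Omega)} = \norm{\bz}^{t}_{\bL^t(\Omega)}$. To obtain $\bz = \mathcal{J}_{t'}(\bw)$ I would substitute the definition once more, writing $\mathcal{J}_{t'}(\bw) = |\bw|^{t'-2}\bw = |\bz|^{(t-1)(t'-2)}\,|\bz|^{t-2}\bz$, and then check that the total exponent $(t-1)(t'-2)+(t-2)$ collapses to $0$ (again using $t'=t/(t-1)$), so that $\mathcal{J}_{t'}(\bw)=\bz$.

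For the pairing, I would integrate the pointwise equality $\bz\cdot\bw = |\bz|^{t-2}|\bz|^2 = |\bz|^t$ to get $\int_\Omega \bz\cdot\bw = \norm{\bz}^t_{\bL^t(\Omega)}$, which together with the norm identity above furnishes the first three equalities in \eqref{eq-lem-p-q-2}. The remaining equality $\norm{\bz}^t_{\bL^t(\Omega)} = \norm{\bz}_{\bL^t(\Omega)}\,\norm{\bw}_{\bL^{t'}(\Omega)}$ then follows from $\norm{\bw}_{\bL^{t'}(\Omega)} = \norm{\bz}^{t/t'}_{\bL^t(\Omega)}$ and the arithmetic identity $1 + t/t' = t$. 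Finally, the relation $\mathcal{J}_{t'}(\mathcal{J}_t(\bz)) = \bz$ just established says $\mathcal{J}_{t'}\circ\mathcal{J}_t = \mathrm{id}$ on $\bL^t(\Omega)$; interchanging the roles of $t$ and $t'$, which the symmetric conjugacy relation permits, gives $\mathcal{J}_t\circ\mathcal{J}_{t'} = \mathrm{id}$ on $\bL^{t'}(\Omega)$, so both maps are bijective and mutually inverse.

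I do not anticipate a genuine obstacle here: the whole argument is pointwise algebra followed by integration. The only care required is the bookkeeping with the conjugate exponents, namely verifying $(t-1)t'=t$ and $(t-1)(t'-2)+(t-2)=0$, together with the separate but trivial treatment of the zero set of $\bz$ that guarantees all the maps and integrals are well defined.
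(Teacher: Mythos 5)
Your proof is correct; all the exponent computations ($(t-1)t'=t$, $(t-1)(t'-2)+(t-2)=0$, $1+t/t'=t$) check out, and the separate treatment of the zero set of $\bz$ is handled properly. The paper itself does not prove this lemma but simply cites it from an external reference, and your argument is precisely the standard pointwise-algebra-plus-integration verification one would find there.
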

\begin{lemma}\label{lem:Ds}
Let $t, \, t' \in (1, \infty)$ be such that $\frac1t + \frac{1}{t'} = 1$. Then, there exists a linear 
and bounded operator $\cD_{t'}:\bL^{t'}(\Omega)\to \bL^{t'}(\Omega)$ satisfying 
\begin{equation}\label{eq-lem-S-1}
\vdiv (\cD_{t'}(\bw)) =0 \quad\text{in}\quad  \Omega \quad \text{and} \quad \cD_{t'}(\bw) \cdot \bn = 0 \quad\text{on}\quad \Gamma, \qquad \forall \, \bw \in \bL^{t'}(\Omega)\,.
\end{equation}
Moreover, for each $\bz \in \bL^t(\Omega)$ such that $\vdiv(\bz) =0$ in $\Omega$ 
and $\bz \cdot \bn = 0 $ on $\Gamma$, there holds 
\begin{equation}\label{eq-lem-S-2}
\int_\Omega \bz \cdot \cD_{t'}(\bw) = \int_\Omega \bz \cdot \bw \qquad \forall \, \bw\in \bL^{t'}(\Omega).
\end{equation}
\end{lemma}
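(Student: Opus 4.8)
The plan is to realise $\cD_{t'}$ as a Leray-type projection coming from a Helmholtz decomposition tailored to the mixed boundary conditions. Given $\bw\in\bL^{t'}(\Omega)$, I would set $\cD_{t'}(\bw):=\bw-\nabla\phi$, where $\phi$ solves the auxiliary mixed Dirichlet--Neumann problem, understood in weak sense,
\begin{equation*}
\Delta\phi = \vdiv\bw \ \text{ in } \Omega,\qquad \nabla\phi\cdot\bn = \bw\cdot\bn \ \text{ on } \Gamma,\qquad \phi=0 \ \text{ on } \Sigma.
\end{equation*}
The Dirichlet datum is imposed precisely on $\Sigma$ (and Neumann on $\Gamma$) so that the normal trace of $\cD_{t'}(\bw)$ vanishes on $\Gamma$ and so that the pairing identity \eqref{eq-lem-S-2} closes. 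Concretely, introducing $\mathrm{W}^{1,t'}_\Sigma(\Omega):=\{\psi\in\mathrm{W}^{1,t'}(\Omega):\psi=0\text{ on }\Sigma\}$ and its exponent-$t$ analogue, I seek $\phi\in\mathrm{W}^{1,t'}_\Sigma(\Omega)$ such that
\begin{equation*}
\int_\Omega \nabla\phi\cdot\nabla\psi = \int_\Omega\bw\cdot\nabla\psi\qquad\forall\,\psi\in\mathrm{W}^{1,t}_\Sigma(\Omega),
\end{equation*}
the boundary integrals on $\Gamma$ cancelling by the Neumann condition and those on $\Sigma$ disappearing since the test functions vanish there.

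The technical core is the $\mathrm{W}^{1,t'}$-solvability of this mixed problem together with the a priori bound $\norm{\nabla\phi}_{\bL^{t'}(\Omega)}\lesssim\norm{\bw}_{\bL^{t'}(\Omega)}$. This is where the difficulty flagged in the Introduction enters: because $\overline\Gamma$ and $\overline\Sigma$ touch, the clean $L^{t'}$ elliptic theory available for a single boundary condition \cite{amrouche98,amrouche21} does not apply, and one must instead invoke the sharper regularity results of \cite{mazya07,mazya09} for mixed boundary value problems, which furnish the Ba\-bu\v{s}ka--Brezzi-type inf-sup bounds for $(\phi,\psi)\mapsto\int_\Omega\nabla\phi\cdot\nabla\psi$ on $\mathrm{W}^{1,t'}_\Sigma(\Omega)\times\mathrm{W}^{1,t}_\Sigma(\Omega)$ within the admissible range of exponents. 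Since $|\Sigma|>0$, a Poincar\'e inequality on $\mathrm{W}^{1,t'}_\Sigma(\Omega)$ removes any additive-constant ambiguity and renders $\phi$ unique; linearity of $\bw\mapsto\phi$, and hence of $\cD_{t'}$, is then immediate, while the a priori bound gives $\norm{\cD_{t'}(\bw)}_{\bL^{t'}(\Omega)}\le\norm{\bw}_{\bL^{t'}(\Omega)}+\norm{\nabla\phi}_{\bL^{t'}(\Omega)}\lesssim\norm{\bw}_{\bL^{t'}(\Omega)}$, establishing boundedness of $\cD_{t'}:\bL^{t'}(\Omega)\to\bL^{t'}(\Omega)$.

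With the operator in hand, the two assertions follow by direct verification. The interior identity $\vdiv(\cD_{t'}(\bw))=\vdiv\bw-\Delta\phi=0$ holds distributionally, and the weak equation itself encodes the natural (Neumann) condition: for every $\psi\in\mathrm{W}^{1,t}_\Sigma(\Omega)$,
\begin{equation*}
\langle\gamma_{\bn}\cD_{t'}(\bw),\gamma\psi\rangle = \int_\Omega\cD_{t'}(\bw)\cdot\nabla\psi = \int_\Omega(\bw-\nabla\phi)\cdot\nabla\psi = 0,
\end{equation*}
whence $\cD_{t'}(\bw)\cdot\bn=0$ on $\Gamma$ and \eqref{eq-lem-S-1} is proved. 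For \eqref{eq-lem-S-2}, take any $\bz\in\bL^t(\Omega)$ with $\vdiv\bz=0$ in $\Omega$ and $\bz\cdot\bn=0$ on $\Gamma$ (so $\bz\in\bH^t(\vdiv,\Omega)$); the Gauss formula recalled in Section~\ref{sec:model} yields $\int_\Omega\bz\cdot\nabla\phi = -\int_\Omega(\vdiv\bz)\,\phi + \langle\gamma_{\bn}\bz,\gamma\phi\rangle = 0$, the volume term vanishing by $\vdiv\bz=0$ and the boundary pairing vanishing since $\gamma_{\bn}\bz=0$ on $\Gamma$ and $\gamma\phi=0$ on $\Sigma$. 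Therefore $\int_\Omega\bz\cdot\cD_{t'}(\bw)=\int_\Omega\bz\cdot\bw-\int_\Omega\bz\cdot\nabla\phi=\int_\Omega\bz\cdot\bw$, as claimed.

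I expect the main obstacle to be exactly the $\mathrm{W}^{1,t'}$ a priori estimate for the mixed problem at the junction $\overline\Gamma\cap\overline\Sigma$: the relevant inf-sup constant degenerates outside a geometry-dependent interval of exponents, so the role of \cite{mazya07,mazya09} is to pin down precisely the admissible range of $t'$ in which the construction — and hence the whole Banach-space framework for the Brinkman subproblem — remains valid. The verification of \eqref{eq-lem-S-1}--\eqref{eq-lem-S-2} and the linearity and boundedness of $\cD_{t'}$ are, by contrast, routine once the auxiliary problem is under control.
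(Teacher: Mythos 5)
The paper does not actually prove this lemma: it imports it from \cite{gatica22} with the remark that it is ``here adapted to the case of mixed boundary conditions''. Your construction --- $\cD_{t'}(\bw):=\bw-\nabla\phi$ with $\phi$ solving the mixed Poisson problem, i.e.\ a Leray-type projection adapted to the splitting $\partial\Omega=\overline\Gamma\cup\overline\Sigma$ --- is precisely the construction used in that source, and your verification of \eqref{eq-lem-S-1} and \eqref{eq-lem-S-2} is correct: testing against $\mathrm{C}_0^\infty(\Omega)$ gives $\vdiv(\cD_{t'}(\bw))=0$, testing against all of $\mathrm{W}^{1,t}_\Sigma(\Omega)$ kills the normal trace on $\Gamma$, and the Gauss formula with $\gamma\phi=0$ on $\Sigma$ and $\gamma_{\bn}\bz=0$ on $\Gamma$ gives the pairing identity. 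In structure, this is the intended proof.

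The one genuine gap is the load-bearing analytic input, which you state but do not establish: the inf-sup stability of $(\phi,\psi)\mapsto\int_\Omega\nabla\phi\cdot\nabla\psi$ on $\mathrm{W}^{1,t'}_\Sigma(\Omega)\times\mathrm{W}^{1,t}_\Sigma(\Omega)$ at the exponent the paper actually uses, $t'=s=\tfrac65$ (equivalently $t=r=6$). For a scalar mixed Dirichlet--Neumann problem on a Lipschitz domain where $\overline\Gamma$ and $\overline\Sigma$ meet, Gr\"oger-type results only guarantee well-posedness for exponents in a window $(2-\epsilon,2+\epsilon)$ around the Hilbert case, and $\tfrac65$ (dually, $6$) lies far outside that window unless one exploits special geometry. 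The references you invoke to fill this, \cite{mazya07} and \cite{mazya09}, concern the Stokes and Navier--Stokes systems in polyhedra, not the scalar mixed Laplacian, so as cited they do not supply the required estimate. To close the argument one would need either a parallel Mellin/weighted-Sobolev analysis of the scalar mixed problem on the polyhedron (analogous to what the paper does for the div--curl system in Lemma~\ref{lem:prop}, with explicit restrictions on the edge angles along $\overline\Gamma\cap\overline\Sigma$), or a different realisation of $\cD_{t'}$ that avoids the mixed Poisson problem at this extreme exponent. Everything else in your write-up is routine and correct, as you say.
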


Let us now present, in a form which is convenient for our analysis, the following result  regarding integrability of vector potentials in the case of mixed boundary conditions. 
\begin{lemma}\label{lem:prop}
If $\ff\in \bL^s(\Omega)$ with $\vdiv\ff = 0$ and $(\ff\cdot\bn)|_\Gamma = 0$, then there exists a unique $\bz \in \bW^{1,s}(\Omega)$ solution to 
 \begin{align}
 \bcurl \bz  = \ff \quad \text{and} \quad 
 \vdiv \bz & = 0 \quad \text{in }\Omega,\nonumber\\
 \bz \cdot \bn  &= 0 \quad 
 \text{on  }\Sigma, \label{eq:asterisk}\\
 \bz \times \bn & = \cero \quad \text{on  } \Gamma,\nonumber
 \end{align}
which satisfies  \begin{equation}\label{eq:higher-reg} \|\bcurl \bz \|_{\bL^s(\Omega)} \lesssim \|\ff\|_{\bL^s(\Omega)}.\end{equation}
\end{lemma}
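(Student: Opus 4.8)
The plan is to build $\bz$ from a vector potential for $\ff$, obtained as the solution of an auxiliary second-order elliptic problem with mixed boundary conditions, and to reduce both the a priori estimate and uniqueness to Laplace-type problems whose sharp $\bL^s$-regularity is the only nonstandard ingredient.

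First I would lift $\ff$ to a potential $\boldsymbol{\Phi}$ solving the vector Poisson problem $-\bDelta\boldsymbol{\Phi}=\ff$ in $\Omega$, supplemented with boundary conditions on $\boldsymbol{\Phi}$ chosen so that (a) the traces $\bz\times\bn=\cero$ on $\Gamma$ and $\bz\cdot\bn=0$ on $\Sigma$ are inherited by $\bz:=\bcurl\boldsymbol{\Phi}$, and (b) $\vdiv\boldsymbol{\Phi}$ is forced to vanish. Since $\vdiv\ff=0$, the scalar field $\vdiv\boldsymbol{\Phi}$ is harmonic, and together with the boundary data and the compatibility condition $(\ff\cdot\bn)|_\Gamma=0$ it is identically zero. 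The vector identity $\bcurl\bcurl\boldsymbol{\Phi}=-\bDelta\boldsymbol{\Phi}+\nabla(\vdiv\boldsymbol{\Phi})$ then gives $\bcurl\bz=\ff$, while $\vdiv\bz=\vdiv\bcurl\boldsymbol{\Phi}=0$ holds automatically, so the constructed field satisfies \eqref{eq:asterisk}. The delicate part of this step is the matching of boundary conditions, which must be arranged precisely so that the traces transfer from $\boldsymbol{\Phi}$ to $\bz$.

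Existence in $\bW^{1,s}(\Omega)$ together with the bound then follows from the regularity of the auxiliary problem: if $\boldsymbol{\Phi}\in\mathbf{W}^{2,s}(\Omega)$ with $\|\boldsymbol{\Phi}\|_{\mathbf{W}^{2,s}(\Omega)}\lesssim\|\ff\|_{\bL^s(\Omega)}$, then $\bz=\bcurl\boldsymbol{\Phi}\in\mathbf{W}^{1,s}(\Omega)$ with $\|\bz\|_{\mathbf{W}^{1,s}(\Omega)}\lesssim\|\ff\|_{\bL^s(\Omega)}$, which in particular yields \eqref{eq:higher-reg}. This is exactly where the structure of the domain enters. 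The Amrouche--Seloula vector-potential theory would deliver such a shift for homogeneous traces imposed on the whole boundary, but here the two conditions live on the adjacent pieces $\Gamma$ and $\Sigma$, whose closures meet, so that theory does not apply. For this configuration I would instead invoke the regularity estimates of \cite{mazya07,mazya09} for mixed Dirichlet--Neumann type problems, valid on the admissible range of Lebesgue exponents $s$ determined by the opening angle along $\overline{\Gamma}\cap\overline{\Sigma}$; this range is the source of the restriction on $s$ throughout the paper.

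For uniqueness I would take $\ff=\cero$ and argue by scalar reduction. Since $\Omega$ is simply connected and $\bcurl\bz=\cero$, we have $\bz=\nabla\phi$ for a scalar potential $\phi$; the constraint $\vdiv\bz=0$ makes $\phi$ harmonic, the condition $\bz\times\bn=\cero$ on $\Gamma$ forces $\phi$ to be locally constant on $\Gamma$, and $\bz\cdot\bn=0$ on $\Sigma$ gives $\partial_{\bn}\phi=0$ on $\Sigma$. Thus $\phi$ solves a homogeneous mixed Laplace problem whose only solution is a constant, whence $\bz=\cero$. The main obstacle, as anticipated, is the regularity step: away from the junction $\overline{\Gamma}\cap\overline{\Sigma}$ the $\bL^s$-estimate is classical, but the interaction of the two boundary-condition types at their meeting is precisely what confines $s$ to a bounded admissible interval and what rules out the off-the-shelf potential theory in favour of the sharper weighted estimates.
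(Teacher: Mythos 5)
Your overall strategy coincides with the paper's: build $\bz$ as the curl of a vector potential obtained from an auxiliary second-order elliptic problem, use the identity $\bcurl\bcurl\boldsymbol{\Phi}=-\bDelta\boldsymbol{\Phi}+\nabla(\vdiv\boldsymbol{\Phi})$ to recover $\bcurl\bz=\ff$, and obtain the $\bW^{1,s}$ bound from the Maz'ya--Rossmann mixed-boundary regularity theory on polyhedra (correctly identified as the reason the Amrouche-type potential theory is unavailable and as the source of the restriction on $s$). Your uniqueness argument via the scalar potential $\phi$ is sound (modulo a word on the connectedness of $\Gamma$, needed to kill the boundary flux term) and is in fact spelled out more completely than in the paper, which only records that the construction is well defined.

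However, there is a genuine gap at the step you yourself flag as ``delicate'' and then defer: you never specify the boundary conditions for the auxiliary problem, and everything nontrivial in the lemma lives there. Two separate claims depend on that unspecified choice. First, the assertion that $\vdiv\boldsymbol{\Phi}$, being harmonic, vanishes ``together with the boundary data'' is circular until you exhibit boundary conditions under which $\vdiv\boldsymbol{\Phi}$ satisfies a homogeneous well-posed scalar problem. Second, and more importantly, the transfer of traces from $\boldsymbol{\Phi}$ to $\bz=\bcurl\boldsymbol{\Phi}$ is not automatic for any natural choice of conditions on $\boldsymbol{\Phi}$ itself. The paper resolves both points at once by posing the auxiliary problem as a Stokes system with $\hat{\bz}=\cero$ on $\Sigma$ and $\bnabla\hat{\bz}\,\bn=\cero$ on $\Gamma$: the constraint $\vdiv\hat{\bz}=0$ is imposed directly (the pressure is then shown to vanish using $\vdiv\ff=0$ and $(\ff\cdot\bn)|_\Gamma=0$, so that $-\bDelta\hat{\bz}=\ff$), and the key geometric observation is that on each \emph{flat} face of the polyhedron the shape operator vanishes, whence $(\bnabla\hat{\bz})\bn=\bcurl\hat{\bz}\times\bn$ on $\Gamma$; this is exactly what converts the Neumann condition on $\hat{\bz}$ into the required condition $\bz\times\bn=\cero$ on $\Gamma$, while $\hat{\bz}=\cero$ on the flat faces of $\Sigma$ yields $\bz\cdot\bn=0$ there because the normal component of a curl involves only tangential derivatives of tangential components. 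Without naming this identity, and the polyhedral-flatness hypothesis that makes it valid, the construction does not close: for a general choice of ``Dirichlet on one piece, Neumann on the other'' the curl of the potential need not satisfy either of the stated trace conditions.
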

We stress that a proof of a similar result is found in \cite[Theorem 4.21]{amrouche21}, stating  that if the domain is  as in \cite[Section 2]{amrouche21} (but in particular, this requires that $\Gamma$ and $\Sigma$ do not actually meet) and if it is furthermore of class $C^{2,1}$, then the result holds for $1<s< \infty$. Another similar result is given in \cite{jochmann97} but it needs that the right-hand side data is in $\bL^2(\Omega)$. For Lipschitz domains, \cite{jakab09} shows that the required regularity holds when $|2-s|<\epsilon$. 
Restricting the type of domains to open polyhedra, \cite[Section 5]{mazya07} and \cite[Section 4.6]{mazya09} show a closely related, intermediate result for Stokes and Navier--Stokes equations. 
If one takes either Dirichlet or Neumann velocity boundary conditions on each face of the polyhedra, then for $\ff \in \bL^s(\Omega)$ the Stokes velocity belongs to $\bW^{2,s}(\Omega)$ with $1<s\leq \frac{8}{7}$. A slight modification of   \cite[Theorem 5.5]{mazya07} 
 allows us to extend the range and use the Lebesgue exponent $s = \frac{6}{5}$, which will match the  admissible exponent  needed in our subsequent analysis. 

A sketch of the required steps is as follows (we do not provide all details, this is part of \cite{rossmann23}). Note first that every velocity-pressure solution in $\bW^{1,2}(\Theta)\times \rL^2(\Theta)$ is  also in the weighted space $\mathcal{\boldsymbol{W}}_{\beta',0}^{1,2} \times \mathcal{W}_{\beta',0}^{0,2}$ if $\beta'$ is nonnegative and $\Theta$ is bounded. 
Next it is necessary to show (as in \cite[Theorem 4.10]{mazya07}) that the 
lines $\mathrm{Re}\,\lambda= -1/2$ and $\mathrm{Re}\,\lambda=2-0-3/s$ coincide, and this line is free of eigenvalues of the pencils $\{\mathfrak A\}_j(\lambda)$. This can be proven as in \cite[Theorem 3.1]{kozlov98}. Moreover, if the edge angles
$\theta_k$ are less than $\pi$, then we have $\mu_k >1/2$, since 
the equation $\cos(\lambda\theta_k) (\lambda^2\sin^2(\theta_k) -\cos^2(\lambda\theta_k))=0$ has no solutions with real part in the interval $[0,\frac12]$ 
(as was written after \cite[Lemma 2.8]{mazya07}). Then the assumption on the eigenvalues of the pencils $\{\mathfrak A\}_j(\lambda)$ in \cite[Theorem 5.5]{mazya07} readily holds for the non-weighted spaces (with $\beta=\beta'=0$, $\delta = 0$ and $s=\frac65$).  This confirms that the condition ${\max(2-\mu_k,0)}<0+2/s <2$ in  \cite[Theorem 5.5]{mazya07} is also satisfied,
and we can conclude that the velocity-pressure pair belongs to $\mathcal{\boldsymbol{W}}_{0,0}^{2,s} \times \mathcal{W}_{0,0}^{1,s}$. 


\begin{proof} (\textit{Proof of Lemma~\ref{lem:prop}})
Let us assume that $\Omega$ is polyhedral, with all dihedral angles less than $\pi$, and that each face lies in either $\Sigma$ or $\Gamma$. From  the modification of \cite[Section 5]{mazya07} outlined above, 
if $\ff\in \bL^s(\Omega)$ 
with $1<s\leq\frac{6}{5}$, 
then there exists a unique $(\hat{\bz},\pi) \in \bW^{1,s}(\Omega)\times \rL^s(\Omega)$ solution to the Stokes problem with mixed boundary conditions 
 \begin{align}
 -\bDelta \hat{\bz} + \nabla \pi & = \ff \quad \text{in }\Omega,\nonumber\\
 \vdiv \hat{\bz} & = 0 \quad \text{in }\Omega,\nonumber\\
 \hat{\bz}   &= \cero\quad \text{on  }\Sigma, \label{eq:aux-stokes2}\\
 \bnabla\hat{\bz}  \bn & = \cero \quad \text{on  } \Gamma.\nonumber
 \end{align}
Moreover, from the same reference  
we have that  $\hat{\bz} \in \bW^{2,s}(\Omega)$ and 
 \begin{equation}\label{eq:higher-reg-stokes} 
  \|\bcurl \hat{\bz} \|_{\bW^{1,s}(\Omega)} \leq \|\bnabla \hat{\bz} \|_{\mathbb{W}^{1,s}(\Omega)} \lesssim \|\ff\|_{\bL^s(\Omega)}.\end{equation}
Assuming now that $\vdiv\ff = 0$ we readily infer that pressure is zero. Also $\hat{\bz} \times \bn = \cero$ on $\Sigma$. Since on each face lying on $\Gamma$ the shape operator on that surface is zero, then we have 
$(\bnabla\hat{\bz})\bn = \bcurl \hat{\bz} \times \bn = \cero$ on $\Gamma$ (see, e.g., \cite[Section 3.1.2]{kim21}).

Then, thanks to the above considerations, we can define (uniquely, thanks to the uniqueness of $\hat{\bz}$), $\bz \equiv \bcurl \hat{\bz}$ belonging to $\bW^{1,s}(\Omega)$, that satisfies $\bz\times \bn = \cero$ on $\Gamma$, 
$\vdiv\bz = 0$ and $-\bDelta \bz = \bcurl \bcurl \bz+ \bnabla(\vdiv\bz) = \ff$ in $\Omega$. This shows existence and uniqueness of solution to \eqref{eq:asterisk}, and by virtue of the continuous dependence on data \eqref{eq:higher-reg-stokes}, we  have 
\begin{equation}\label{eq:higher-aux}
\|\bz\|_{\bW^{1,s}(\Omega)} \lesssim \|\ff\|_{\bL^s(\Omega)},
\end{equation}
which, from norm definitions, implies \eqref{eq:higher-reg}. 
\end{proof}

We continue by collecting key properties of the bilinear and trilinear forms. 
\subsection{Properties of bilinear and trilinear forms}
First, it is straightforward to see that, thanks to H\"older and Cauchy--Schwarz inequalities, the estimate 
\begin{equation}\label{eq:0-r}
\|\cdot\|_{0,\Omega} \leq C_{\Omega,r} \|\cdot\|_{\bL^r(\Omega)},
\end{equation}    
with $C_{\Omega,r}:=|\Omega|^{(r-2)/2r}$, and the norm definitions \eqref{eq:norms}, the bilinear forms $a_i(\cdot,\cdot),b_i(\cdot,\cdot)$ are all bounded as follows 
\begin{subequations}
\begin{align}
|a_1(\bomega,\bzeta)  | & \leq \|\bomega\|_{0,\Omega}  \|\bzeta\|_{0,\Omega}  \leq \|\bomega\|_{\bcurl_s,\Omega} \|\bzeta\|_{\bcurl_s,\Omega},\label{eq:bound-a1}\\
|b_1(\bzeta,\bv) | & \leq  \sqrt{\mu'} \|\bcurl \bzeta\|_{\bL^s(\Omega)} \|\bv\|_{\bL^r(\Omega)} \leq  \sqrt{\mu'} \|\bzeta\|_{\bcurl_s,\Omega}\|\bv\|_{r,\vdiv,\Omega} ,\label{eq:bound-b1}\\
|a_2(\bu,\bv) |& \lesssim \frac{\mu}{\kappa} \|\bu\|_{\bL^r(\Omega)} \|\bv\|_{\bL^r(\Omega)} \leq \frac{\mu}{\kappa} \|\bu\|_{r,\vdiv,\Omega} \|\bv\|_{r,\vdiv,\Omega},\label{eq:bound-a2}\\
|b_2(\bv,q)| & \leq \|\bv\|_{0,\Omega}\|q\|_{0,\Omega} \lesssim   \|\bv\|_{r,\vdiv,\Omega}\|q\|_{0,\Omega},\label{eq:bound-b2}\\
|a_3(T,S)  | & \leq\max\{\sigma_0,\alpha\} \|T\|_{1,\Omega} \|S\|_{1,\Omega}.\label{eq:bound-a3}
\end{align}\end{subequations}
Similarly the (affine,linear)-form $F:\rH^1(\Omega)\times \bH^r_\star(\vdiv,\Omega)\to \mathbb{R}$ and the linear functional $G:\rH^1(\Omega)\to \mathbb{R}$ are bounded:
\begin{subequations}
\begin{align} 
|F(S;\bv)| &\lesssim \rho \beta \|\bg\|_{\bL^\infty(\Omega)} \|S\|_{1,\Omega}\|\bv\|_{r,\vdiv,\Omega},\label{eq:F-bound}\\
|G(S)| &\leq \|g\|_{0,\Omega}\|S\|_{1,\Omega}.\label{eq:G-bound}
\end{align} \end{subequations}
In particular, note that $F$ may be viewed as an affine map $F:\rH^1_\star(\Omega) \to \bH^r_\star(\vdiv,\Omega)'$ in which case we may express it as the sum $F = \widetilde{F} + F_0$, where $\widetilde{F}: \rH^1_\star(\Omega) \to \bH^r_\star(\vdiv,\Omega)'$ denotes the linear part of $F$.

\noindent Continuing, we recall that $\rH^1(\Omega)$ is continuously embedded into $\rL^t(\Omega)$ with $t \in (1, \infty)$ in $\mathbb{R}^2$ and $t\in (1,6]$ in $\mathbb{R}^3$. More precisely, we have the following inequality
\begin{equation}\label{eq:Sobolev-inequality}
\|w\|_{\rL^t(\Omega)}\leq   C_{\mathrm{Sob}}\,\|w\|_{1,\Omega}\quad 
\forall\,w \in \rH^1(\Omega), 
\end{equation}
with $C_{\mathrm{Sob}}> 0$ depending only on $|\Omega|$ and $t$ (see \cite[Theorem 1.3.4]{quarteroni94}). Then we note that, by virtue of H\"older's inequality, the estimate \eqref{eq:0-r} and \eqref{eq:Sobolev-inequality}, we have that the trilinear forms $c_1(\cdot;\cdot,\cdot)$  and $c_2(\cdot,\cdot;\cdot)$ are bounded as follows 
\begin{subequations}\label{eq:c1-c2-bound}
\begin{align}
|c_1(\bv;T,S)| & \leq \|\bv\|_{\bL^r(\Omega)}\|\nabla T\|_{0,\Omega}\|S\|_{\rL^{t}(\Omega)} \lesssim \|\bv\|_{r,\vdiv,\Omega} \|T\|_{1,\Omega}\|S\|_{1,\Omega},\label{bound-c1}\\
|c_2(\bu,\bv;S) |& \leq \frac{\mu}{\kappa c'\rho} \|\bu\|_{\bL^r(\Omega)} \|\bv\|_{\bL^r(\Omega)} \|S\|_{\rL^{t}(\Omega)} \lesssim \frac{\mu}{\kappa c'\rho} \|\bu\|_{r,\vdiv,\Omega} \|\bv\|_{r,\vdiv,\Omega} \|S\|_{1,\Omega} \label{bound-c2}.
\end{align}\end{subequations}
\begin{remark}
    At this point we can choose a feasible value for the Lebesgue exponents {$r>2$ and $\frac1r+\frac1s=1$}  which is motivated in particular by the boundedness stated in \eqref{eq:bound-b1} and \eqref{bound-c1},  which is valid in both 2D and 3D.  Next, and owing to the   regularity needed for the proof of inf-sup condition for $b_1(\cdot,\cdot)$ (cf.  Lemma~\ref{lem:inf-sup}, below), we specify {$r=6$}.   In this case we take {$s = \frac{6}{5}$} and we can choose the exponent {$t=3$} in \eqref{bound-c1}, and {$t=\frac{3}{2}$} in \eqref{bound-c2}. For the sake of notation, we maintain the indexes $r$ and $s$ in the spaces below, but we stress that we restrict the analysis to the aforementioned specification. 
\end{remark}

Let us denote by  $\bV_0$ the Kernel of the operator $\bB_2$
\begin{align}\label{eq:charact}
    \bV_0& := \mathrm{Ker}(\bB_2) = \{ \bv\in \bH^r(\vdiv,\Omega): b_2(\bv,q) = 0 \quad \forall q\in \rL^2(\Omega)\} \nonumber \\
    & = \{ \bv\in \bH^r(\vdiv,\Omega): \vdiv\bv = 0\}.
\end{align}
We also recall that using the chain rule for the term $\vdiv(T\bv)$, applying integration by parts, and using the boundary conditions for temperature and velocity, the advection term can be written in the anti-symmetric form 
\[ c_1(\bv;T,S) = \frac12 \int_\Omega (\bv \cdot \nabla T )S - \frac12 \int_\Omega (\bv \cdot \nabla S )T \qquad \forall \bv \in \bV_0,\ T,S \in \rH_\star^1(\Omega),\]
and therefore it satisfies the following well-known property 
\begin{equation}\label{eq:c1-zero} c_1(\bv;S,S) = 0 \qquad \forall \bv \in \bV_0,\ S \in \rH_\star^1(\Omega).\end{equation}

We continue with the following Lemma, concerning inf-sup conditions for the bilinear forms $b_i(\cdot,\cdot)$, $i=1,2$.
\begin{lemma}\label{lem:inf-sup}
Assume that the domain {has a polyhedral boundary}. Then there exist positive constants $\beta_1,\beta_2$ such that 
\begin{subequations}
\begin{align}
  \label{eq:inf-sup-b1}
    \sup_{\bzeta \in \bH_\star(\bcurl_s,\Omega)\setminus\{\cero\}} \frac{b_1(\bzeta,\bv)}{\|\bzeta\|_{\bcurl_s,\Omega}} & \geq \beta_1 \|\bv\|_{r,\vdiv,\Omega} \qquad \forall \bv\in \bV_0,\\
    \sup_{\bv \in \bH_\star^r(\vdiv,\Omega)\setminus\{\cero\}} \frac{b_2(\bv,q)}{\|\bv\|_{r,\vdiv,\Omega}} & \geq \beta_2 \|q\|_{0,\Omega} \qquad \forall q\in \rL^2(\Omega).\label{eq:inf-sup-b2}
\end{align}\end{subequations}
\end{lemma}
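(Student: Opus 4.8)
The plan is to establish each inf-sup condition by exhibiting, for a given $\bv\in\bV_0$ (respectively $q\in\rL^2(\Omega)$), an explicit supremiser in the relevant test space whose associated Rayleigh quotient is bounded below by the target norm. The second estimate \eqref{eq:inf-sup-b2} is the more classical of the two and I would treat it first; the first estimate \eqref{eq:inf-sup-b1} is where the Banach structure really bites, and there the preliminary Lemmas~\ref{lem-p-q}, \ref{lem:Ds} and \ref{lem:prop} must be combined.

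For \eqref{eq:inf-sup-b2}, given $q\in\rL^2(\Omega)$ I would introduce the auxiliary mixed boundary value problem for the Laplacian,
\[
-\Delta z = q \ \text{ in }\Omega, \qquad z = 0 \ \text{ on }\Sigma, \qquad \nabla z\cdot\bn = 0 \ \text{ on }\Gamma,
\]
which is uniquely solvable in $\rH^1_\star(\Omega)$ since $|\Sigma|>0$ furnishes a Poincar\'e inequality. Setting $\bv:=-\nabla z$ gives $\vdiv\bv = q$ in $\rL^2(\Omega)$ and $\bv\cdot\bn = 0$ on $\Gamma$, so that $\bv\in\bH^r_\star(\vdiv,\Omega)$ as soon as $\nabla z\in\bL^r(\Omega)=\bL^6(\Omega)$. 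That membership, together with the bound $\|\nabla z\|_{\bL^6(\Omega)}\lesssim\|q\|_{0,\Omega}$, is exactly the elliptic regularity of the mixed problem on the polyhedral domain (via the embedding $\rH^2\hookrightarrow\rW^{1,6}$ in three dimensions, or directly through the same Mazya--Rossmann framework invoked for Lemma~\ref{lem:prop}). Then $b_2(\bv,q)=\int_\Omega q\,\vdiv\bv = \|q\|^2_{0,\Omega}$ while $\|\bv\|_{r,\vdiv,\Omega}=\|\nabla z\|_{\bL^6(\Omega)}+\|q\|_{0,\Omega}\lesssim\|q\|_{0,\Omega}$, and evaluating the supremum at this single $\bv$ yields \eqref{eq:inf-sup-b2}.

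For \eqref{eq:inf-sup-b1}, fix $\bv\in\bV_0$, so that $\vdiv\bv=0$, $\bv\cdot\bn=0$ on $\Gamma$, and $\|\bv\|_{r,\vdiv,\Omega}=\|\bv\|_{\bL^6(\Omega)}$. The idea is to realise the $\bL^6$ norm of $\bv$ by duality and then manufacture a vorticity test function whose curl reproduces the dual object. Concretely, I would set $\bw:=\mathcal{J}_r(\bv)\in\bL^s(\Omega)$, so that by Lemma~\ref{lem-p-q} one has $\int_\Omega\bv\cdot\bw=\|\bv\|_{\bL^6(\Omega)}\,\|\bw\|_{\bL^{6/5}(\Omega)}$ and $\|\bw\|_{\bL^{6/5}(\Omega)}=\|\bv\|_{\bL^6(\Omega)}^{5}$. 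Since $\bw$ need not be solenoidal, I would pass to $\cD_s(\bw)\in\bL^s(\Omega)$ from Lemma~\ref{lem:Ds}: this field is divergence-free with vanishing normal trace on $\Gamma$, obeys $\|\cD_s(\bw)\|_{\bL^s(\Omega)}\lesssim\|\bw\|_{\bL^{6/5}(\Omega)}$, and---crucially, because $\bv\in\bV_0$---preserves the pairing, $\int_\Omega\bv\cdot\cD_s(\bw)=\int_\Omega\bv\cdot\bw$ by \eqref{eq-lem-S-2}. Now Lemma~\ref{lem:prop} applied to $\ff=\cD_s(\bw)$ produces a potential $\bzeta\in\bW^{1,s}(\Omega)$ with $\bcurl\bzeta=\cD_s(\bw)$, $\bzeta\times\bn=\cero$ on $\Gamma$, and $\|\bzeta\|_{\bW^{1,s}(\Omega)}\lesssim\|\cD_s(\bw)\|_{\bL^s(\Omega)}$ from \eqref{eq:higher-aux}.

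The final check is that this $\bzeta$ is an admissible test function and delivers the bound. Membership $\bzeta\in\bH_\star(\bcurl_s,\Omega)$ follows from the Sobolev embedding $\bW^{1,6/5}(\Omega)\hookrightarrow\bL^2(\Omega)$ in three dimensions (so $\bzeta\in\bL^2(\Omega)$), the fact that $\bcurl\bzeta=\cD_s(\bw)\in\bL^s(\Omega)$, and the tangential condition $\bzeta\times\bn=\cero$ on $\Gamma$. Combining \eqref{eq:higher-aux}, the boundedness of $\cD_s$, and the identity for $\|\bw\|_{\bL^{6/5}(\Omega)}$ gives $\|\bzeta\|_{\bcurl_s,\Omega}\lesssim\|\bv\|^{5}_{\bL^6(\Omega)}$, whereas $b_1(-\bzeta,\bv)=\sqrt{\mu'}\int_\Omega\cD_s(\bw)\cdot\bv=\sqrt{\mu'}\,\|\bv\|^6_{\bL^6(\Omega)}$. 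Dividing, the two factors $\|\bv\|^5_{\bL^6(\Omega)}$ cancel and leave $\|\bv\|_{\bL^6(\Omega)}=\|\bv\|_{r,\vdiv,\Omega}$, which is \eqref{eq:inf-sup-b1}. I expect the main obstacle to be exactly the bookkeeping at this interface: verifying that the vector potential furnished by Lemma~\ref{lem:prop} lands in $\bL^2(\Omega)$ (the borderline embedding in 3D) and that the non-solenoidal part of $\mathcal{J}_r(\bv)$ is correctly neutralised by $\cD_s$ without destroying the pairing---this is precisely why \eqref{eq-lem-S-2} must be applied with $\bz=\bv\in\bV_0$. For \eqref{eq:inf-sup-b2} the only delicate point is the $\bL^6$ elliptic estimate for the mixed Laplacian on the polyhedron.
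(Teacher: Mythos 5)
Your treatment of \eqref{eq:inf-sup-b1} is essentially the paper's own argument: the same chain $\bv\mapsto\mathcal{J}_r(\bv)\mapsto\cD_s[\mathcal{J}_r(\bv)]\mapsto$ vector potential via Lemma~\ref{lem:prop}, with the pairing preserved by \eqref{eq-lem-S-2} because $\bv\in\bV_0$, and admissibility of the test function obtained from the (borderline, in 3D) embedding $\bW^{1,6/5}(\Omega)\hookrightarrow\bL^2(\Omega)$. That part is correct, and if anything you are more careful than the paper about the sign in $b_1(\cdot,\cdot)$ and about which Sobolev exponent is actually available at $s=6/5$.

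The gap is in \eqref{eq:inf-sup-b2}. You build the supremiser as $\bv=-\nabla z$ with $z$ solving the mixed Dirichlet--Neumann Poisson problem, and you then need $\|\nabla z\|_{\bL^6(\Omega)}\lesssim\|q\|_{0,\Omega}$, i.e.\ essentially full $\rH^2$ (equivalently $\rW^{1,6}$ for the gradient) regularity with data only in $\rL^2(\Omega)$. This is not available on a general polyhedron with mixed boundary conditions: at a point where $\overline\Sigma$ and $\overline\Gamma$ meet at interior angle $\omega$, the solution generically contains a singular part behaving like $r^{\pi/(2\omega)}$, whose gradient already fails to lie in $\bL^6$ for $\omega\in[3\pi/4,\pi)$ in 2D, and edge singularities in 3D are more restrictive still; the regularity $\rH^{3/2-\epsilon}$ that mixed problems do deliver only embeds into $\rW^{1,3-\epsilon}$, short of $\bL^6$. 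Your fallback to the Mazya--Rossmann framework does not rescue this either, since those results give $\rW^{2,s}$ with $s\le 6/5$ (hence gradients in $\bL^2$ at best), not $\rW^{2,2}$. The paper avoids the issue entirely: it takes $\tilde\bv=\br$ where $(\br,z)$ solves a mixed Stokes problem with $\vdiv\br=q$, $\br=\cero$ on $\Gamma$ and a pseudo-traction condition on $\Sigma$, for which only the energy estimate $\|\br\|_{\bH^1(\Omega)}\lesssim\|q\|_{0,\Omega}$ is required (well-posedness of the saddle point, not elliptic regularity), followed by $\bH^1(\Omega)\hookrightarrow\bL^6(\Omega)$. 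You should replace your scalar auxiliary problem by such a divergence lifting in $\bH^1$ (Stokes or Bogovskii-type); otherwise the construction only works under geometric restrictions far stronger than the lemma's hypothesis.
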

\begin{proof}
For a given $\bv\in \bV_0$, using Lemmas~\ref{lem-p-q} and \ref{lem:Ds} with $t=r,t'=s$, we can construct 
\[ \cD_s[\mathcal{J}_r(\bv)] := \cD_s(|\bv|^{r-2}\bv)  \in \bL^{s}(\Omega),\]
which furthermore satisfies  $\vdiv (\cD_s[\mathcal{J}_r(\bv)]) = 0$ (due to Lemma~\ref{lem:Ds}) and $(\cD_s[\mathcal{J}_r(\bv)] \cdot \bn)_\Gamma = 0$. Then, owing to Lemma~\ref{lem:prop} (with $\ff = \cD_s[\mathcal{J}_r(\bv)]$ and $t=r$, $t'=s$), we know that there exists $\bz \in \bW^{1,s}(\Omega)$ such that 
\[\bcurl\bz = \cD_s[\mathcal{J}_r(\bv)], \quad \vdiv\bz = 0 \quad \text{in $\Omega$}, \quad 
\bz \times \bn = \cero \quad \text{on $\Gamma$}, \quad \bz\cdot\bn = 0\quad  \text{on $\Sigma$},\]
and it satisfies the estimate 
\begin{equation}\label{eq:bound-7}\|\bcurl\bz\|_{\bL^{s}(\Omega)} 
\lesssim \|\cD_s[\mathcal{J}_r(\bv)]\|_{\bL^{s}(\Omega)}.\end{equation}
By virtue of the Sobolev embedding theorem (and recalling that $r>2$ and that $\frac1r+\frac1s=1$), there holds  $\bz \in \bW^{1,s}(\Omega) \hookrightarrow \bL^r(\Omega) \subset \bL^2(\Omega)$, and we can conclude that $\bz \in \bH_\star(\bcurl_s,\Omega)$.

Next, and thanks again to Lemma~\ref{lem-p-q}, we have that 
\begin{align*}  \sup_{\bzeta \in \bH_\star(\bcurl_s,\Omega)\setminus\{\cero\}} \frac{b_1(\bzeta,\bv)}{\|\bzeta\|_{\bcurl_s,\Omega}} & \geq \frac{b_1(\bz, \bv)}{\|\bz\|_{\bcurl_s,\Omega}} = \frac{\int_\Omega \cD_s[\mathcal{J}_r(\bv)]  \cdot \bv}{\|\bz\|_{\bcurl_s,\Omega}} \\
& = \frac{\| \cD_s[\mathcal{J}_r(\bv)] \|_{\bL^{s}(\Omega)} \|\bv\|_{\bL^r(\Omega)}}{\|\bz\|_{\bcurl_s,\Omega}} \\
& \gtrsim  \|\bv\|_{\bL^r(\Omega)},\end{align*}
where we have used \eqref{eq:bound-7}. The inf-sup constant in \eqref{eq:inf-sup-b1} depends then on the hidden elliptic regularity constant, on the exponents $r=6$, $s=\frac65$ using \eqref{eq:0-r},  and on the dimension $d$.

On the other hand, for the inf-sup condition of $b_2(\cdot,\cdot)$, we use the usual regularity enjoyed by the Stokes equations with mixed boundary conditions: for a given $q\in \rL^2(\Omega)$, find $\br,z $ 
such that 
\[ -\bDelta \br + \nabla z = \cero \quad \text{and} \quad \vdiv \br = q \quad \text{in $\Omega$}, \qquad 
\br = \cero \quad \text{on $\Gamma$}, \qquad (\bnabla \br - z\bI) \bn = \cero \quad \text{on $\Sigma$}.\]
From \cite{girault79} it follows that $\br\in \bH^1_\star(\Omega)$, $z\in \rL^2(\Omega)$ and 
\[\| \br\|_{\bH^1(\Omega)} \lesssim \|q\|_{0,\Omega}.\]
Then we can choose 
\begin{equation}\label{re01}
    \tilde{\bv}\equiv \br \quad \text{with} \quad (\br\cdot\bn)|_\Gamma = 0 \ \text{and} \ \vdiv \tilde{\bv} = q \in \rL^2(\Omega),
\end{equation} 
and using next the Sobolev embedding from $\bH^1(\Omega)$ into $\bL^r(\Omega)$, we can assert that 
\begin{equation}\|\tilde{\bv}\|_{\bL^r(\Omega)}  \lesssim \|\tilde{\bv}\|_{\bH^1(\Omega)} \lesssim \|q\|_{0,\Omega}, \label{re02}\end{equation}
and therefore $\|\tilde{\bv}\|_{r,\vdiv,\Omega}\lesssim \|q\|_{0,\Omega}$. Then \eqref{eq:inf-sup-b2} follows straightforwardly from \eqref{re01}--\eqref{re02} giving the inf-sup constant $\beta_2$ depending on the hidden Stokes  regularity constant, on the dimension,  and on the continuous injection constant.
\end{proof}

Note that  from Lemma~\ref{lem:prop}, for a given $\bv \in \bV_0$, there exists a unique vector potential $\bz$ satisfying $\bcurl\bz = \bv$. Then, and similarly to \eqref{eq:charact}, we can obtain the following characterisation of the null space of $\bB_1^\star$ restricted to the null space of $\bB_2$ 
\begin{equation}\label{eq:char-2}
\text{Ker}(\bB_1^\star|_{\bV_0})=\{\bv\in \bV_0: \int_\Omega \bcurl \bzeta \cdot \bv = 0 \quad \forall \bzeta \in \bH_\star(\bcurl_s,\Omega)\} =\{\cero\}.\end{equation} 

In addition, we shall denote by $\bZ_0$  the Kernel of the operator $\bB_1$
\begin{align}\label{eq:charact-Z0}
 \bZ_0& := \mathrm{Ker}(\bB_1) = \{ \bzeta\in \bH_\star(\bcurl_s,\Omega): b_1(\bzeta,\bv) = 0 \quad \forall \bv\in \bH_\star^r(\vdiv,\Omega)\} \nonumber \\
 & = \{ \bzeta\in \bH_\star(\bcurl_s,\Omega): \bcurl\bzeta = \cero\}.
\end{align}

Finally, and directly from their definition, we can state the positivity and coercivity of the diagonal bilinear forms $a_i(\cdot,\cdot)$, $i =1,2,3$:
\begin{subequations}
    \begin{align}
  \label{eq:a1-coer}  a_1(\bzeta,\bzeta) & =\|\bzeta\|^2_{0,\Omega} =  \|\bzeta\|^2_{\bcurl_s,\Omega}  \qquad \forall\bzeta\in \bZ_0,\\
   \label{eq:a2-pos}     a_2(\bv,\bv) & = {\frac{\mu}{\kappa}\|\bv\|^2_{0,\Omega} \geq 0 \qquad \forall \bv \in \bV_0},  \\
\label{eq:a3-coer}    a_3(S,S) & \geq \min\{\sigma_0,\alpha\} \|S\|^2_{1,\Omega} \qquad \forall S \in \rH_\star^1(\Omega).
    \end{align}
\end{subequations}

We finalise this preliminary section stating an abstract result (unique solvability of perturbed saddle-point problems in Banach spaces) required in the proof of well-posedness of the decoupled Darcy--Brinkman equations. Its proof can be found in the recent work \cite[Theorem 3.4]{correa22}.

\begin{theorem}\label{th:abstract}
Let $X,Y$ be reflexive Banach spaces and consider bounded bilinear forms $a:X\times X\to \mathbb{R}$, $b:X\times Y\to \mathbb{R}$, and $c:Y\times Y\to \mathbb{R}$ (with boundedness constants $\|a\|,\|b\|$, and $\|c\|$, respectively). Let $\bB:X\to Y'$ be a bounded linear map induced by $b(\cdot,\cdot)$ and let $X_0$ denote its null space. Assume that $\mathrm{Ker}(\bB^*)=\{0\}$ and further suppose that 
\begin{enumerate}
    \item $a(\cdot,\cdot)$ and $c(\cdot,\cdot)$ are symmetric and semi-positive definite over $X$ and $Y$, respectively,
    \item there exists $\tilde{\alpha}>0$ such that 
    \[\sup_{\tau\in X_0\setminus\{0\}} \frac{a(\sigma_0,\tau)}{\|\tau\|_X}\geq \tilde{\alpha} \|\sigma_0\|_X \qquad \forall \sigma_0\in X_0,\]
    \item and there exists $\tilde{\beta}>0$ such that 
      \[\sup_{\tau\in X\setminus\{0\}} \frac{b(\tau,v)}{\|\tau\|_X}\geq \tilde{\beta} \|v\|_Y \qquad \forall v\in Y.\]
\end{enumerate}
Then, for each $(f,g)\in X'\times Y'$ there exists a unique $(\sigma,u)\in X\times Y$ solution to the following perturbed saddle-point problem 
\begin{align*}
    a(\sigma,\tau) + b(\tau,u)&=\;f(\tau)\qquad\forall\tau\in X, \\
 b(\sigma,v)- c(u,v) &=\;g(v)\qquad\forall v\in Y.
\end{align*}
Moreover, the solution satisfies the stability bound  
\[\|(\sigma,u)\|_{X\times Y} \lesssim \|f\|_{X'}+\|g\|_{Y'},\]
where the hidden constant depends only on $\|a\|,\|c\|,\tilde{\alpha}$, and $\tilde{\beta}$.
\end{theorem}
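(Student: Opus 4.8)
The plan is to recast the two coupled equations as a single symmetric variational problem on the product space $X\times Y$ and then to verify one global inf-sup condition, from which existence, uniqueness and the stability bound follow by the Banach--Ne\v{c}as--Babu\v{s}ka theorem. Concretely, I would introduce the bounded bilinear form
\[
\mathcal{A}((\sigma,u),(\tau,v)) := a(\sigma,\tau)+b(\tau,u)+b(\sigma,v)-c(u,v),
\]
so that the problem reads: find $(\sigma,u)\in X\times Y$ with $\mathcal{A}((\sigma,u),(\tau,v)) = f(\tau)+g(v)$ for all $(\tau,v)\in X\times Y$. Using the symmetry of $a$ and $c$ from item~1, one checks directly that $\mathcal{A}$ is symmetric; hence on the reflexive space $X\times Y$ the non-degeneracy in the second argument required by Banach--Ne\v{c}as--Babu\v{s}ka follows from the inf-sup estimate in the first, and it suffices to exhibit a constant $\gamma>0$ with
\[
\sup_{0\neq(\tau,v)}\frac{\mathcal{A}((\sigma,u),(\tau,v))}{\|(\tau,v)\|_{X\times Y}}\;\geq\; \gamma\,\|(\sigma,u)\|_{X\times Y}\qquad\forall\,(\sigma,u)\in X\times Y.
\]

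To build the required test functions I would first record the consequences of the hypotheses. The inf-sup assumption in item~3 is equivalent to $\bB^*$ being bounded below, so by the closed-range and open-mapping theorems $\bB:X\to Y'$ is surjective with a bounded (possibly nonlinear) right inverse; this lets me split any $\sigma = \sigma_0 + z$ with $\sigma_0\in X_0$, $\bB z=\bB\sigma$ and $\|z\|_X\lesssim \tilde{\beta}^{-1}\|\bB\sigma\|_{Y'}$. The same assumption produces, for the given $u$, an element $\tau_u\in X$ with $b(\tau_u,u)\gtrsim\tilde{\beta}\|u\|_Y^2$ and $\|\tau_u\|_X\lesssim\|u\|_Y$, and a test $v_\sigma\in Y$ with $b(\sigma,v_\sigma)\gtrsim\|\bB\sigma\|_{Y'}^2$ and $\|v_\sigma\|_Y\lesssim\|\bB\sigma\|_{Y'}$; from the inf-sup of $a$ on the kernel (item~2) I obtain $\tau_0\in X_0$ with $a(\sigma_0,\tau_0)\geq\tilde{\alpha}\|\sigma_0\|_X^2$ and $\|\tau_0\|_X=\|\sigma_0\|_X$. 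The semi-positive definiteness in item~1 enters through the basic identity
\[
\mathcal{A}((\sigma,u),(\sigma,-u)) = a(\sigma,\sigma)+c(u,u)\geq 0,
\]
which controls the degenerate energy but not yet the full norms.

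The core of the argument is then a gluing step: I would test against
\[
(\tau,v) := \bigl(\sigma + \epsilon_1\tau_0 + \epsilon_2\tau_u,\; -u + \epsilon_3 v_\sigma\bigr),
\]
expand $\mathcal{A}((\sigma,u),(\tau,v))$, and use the boundedness constants $\|a\|,\|b\|,\|c\|$ together with Young's inequality to absorb all cross terms into the positive contributions $\tilde{\alpha}\|\sigma_0\|_X^2$, $\|\bB\sigma\|_{Y'}^2$ and $\tilde{\beta}\|u\|_Y^2$. Choosing $\epsilon_1,\epsilon_2,\epsilon_3$ small and in the right order yields a lower bound of the form $\gamma\bigl(\|\sigma_0\|_X^2+\|\bB\sigma\|_{Y'}^2+\|u\|_Y^2\bigr)$, and since the decomposition gives $\|\sigma\|_X\lesssim\|\sigma_0\|_X+\|\bB\sigma\|_{Y'}$ this dominates $\gamma\|(\sigma,u)\|_{X\times Y}^2$, while $\|(\tau,v)\|_{X\times Y}\lesssim\|(\sigma,u)\|_{X\times Y}$. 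This is exactly the global inf-sup, and the stability estimate $\|(\sigma,u)\|_{X\times Y}\lesssim\|f\|_{X'}+\|g\|_{Y'}$ is then automatic, with $\gamma$ depending only on $\tilde{\alpha},\tilde{\beta},\|a\|,\|c\|$ as claimed.

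I expect the main obstacle to be precisely this balancing of test functions, because $a$ and $c$ are only semi-positive definite (so neither controls a full norm on its own) and the perturbation enters the symmetric form with the opposite sign $-c$. The delicate point is to order the smallness of $\epsilon_1,\epsilon_2,\epsilon_3$ so that the indefinite cross terms—in particular those coupling $z$, $\tau_u$ and the $c$-block—are dominated without destroying the gains from items~2 and~3. A secondary nuisance is that, unlike in the Hilbert setting, there is no orthogonal projection onto $X_0$; the decomposition $\sigma=\sigma_0+z$ must instead be furnished by the bounded right inverse coming from item~3, and one has to track the resulting constants to confirm they depend only on the advertised quantities.
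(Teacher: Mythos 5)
First, note that the paper does not actually prove this statement: it is quoted as an abstract tool and its proof is delegated to \cite[Theorem 3.4]{correa22}. Your strategy --- pass to the symmetric global form $\mathcal{A}((\sigma,u),(\tau,v))=a(\sigma,\tau)+b(\tau,u)+b(\sigma,v)-c(u,v)$, prove a single inf-sup condition by gluing test functions built from the decomposition $\sigma=\sigma_0+z$, and invoke Banach--Ne\v{c}as--Babu\v{s}ka on the reflexive product space --- is exactly the spirit of the cited proof, and all the preparatory steps (surjectivity of $\bB$ with a $\tilde{\beta}^{-1}$-bounded right inverse, the near-supremizers $\tau_0$, $\tau_u$, $v_\sigma$, the cancellation $b(\tau_0,u)=0$) are correct.

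There is, however, a genuine gap in the absorption step, precisely at the point you flag as ``delicate'' but do not resolve. If, as you propose, every cross term is absorbed only into the three $\epsilon$-weighted positive contributions $\epsilon_1\tilde{\alpha}\|\sigma_0\|_X^2$, $\epsilon_3\|\bB\sigma\|_{Y'}^2$ and $\epsilon_2\tilde{\beta}\|u\|_Y^2$, then Young's inequality forces the cyclic constraints $\epsilon_2\leq C_1\epsilon_1$ (from the $\|\sigma_0\|_X^2$ part of $\epsilon_2 a(\sigma,\tau_u)$), $\epsilon_1\leq C_2\epsilon_3$ (from the $\|\bB\sigma\|_{Y'}^2$ part of $\epsilon_1 a(z,\tau_0)$), and $\epsilon_3\leq C_3\epsilon_2$ (from the $\|u\|_Y^2$ part of $\epsilon_3 c(u,v_\sigma)$), where the $C_i$ are fixed by $\|a\|,\|c\|,\tilde{\alpha},\tilde{\beta}$. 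Chaining gives $\epsilon_3\leq C_1C_2C_3\,\epsilon_3$, which admits a positive solution only if $C_1C_2C_3\geq 1$ --- a smallness condition on the data that the theorem does not assume. The missing idea is that the two $\epsilon$-free positive terms you discard as merely ``degenerate energy,'' namely $a(\sigma,\sigma)\geq 0$ and $c(u,u)\geq 0$, must themselves serve as absorbers via the generalized Cauchy--Schwarz inequality for symmetric positive semi-definite forms: $|c(u,v_\sigma)|\leq c(u,u)^{1/2}c(v_\sigma,v_\sigma)^{1/2}$ yields $\epsilon_3|c(u,v_\sigma)|\leq\tfrac12 c(u,u)+\tfrac12\epsilon_3^2\|c\|\,\|\bB\sigma\|_{Y'}^2$, and $|a(\sigma,\tau_u)|\leq a(\sigma,\sigma)^{1/2}a(\tau_u,\tau_u)^{1/2}$ yields $\epsilon_2|a(\sigma,\tau_u)|\leq\tfrac12 a(\sigma,\sigma)+\tfrac12\epsilon_2^2\|a\|\,\|u\|_Y^2$. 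These replace the constraints $\epsilon_3\lesssim\epsilon_2$ and $\epsilon_2\lesssim\epsilon_1$ by \emph{absolute} smallness conditions on $\epsilon_2,\epsilon_3$, leaving only the one-way requirement $\epsilon_1\lesssim\epsilon_3$ and breaking the cycle. With that modification your construction goes through and delivers the global inf-sup constant depending only on $\|a\|,\|c\|,\tilde{\alpha},\tilde{\beta}$ as claimed.
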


\subsection{Solvability of the decoupled Darcy--Brinkman equations}
We start by establishing the wellposedness of the Darcy--Brinkman problem for a given temperature. The analysis for the Hilbertian case and in 2D is performed in \cite{anaya16} using Banach--Ne\v{c}as--Babu\v{s}ka's theory and working in the Kernel of the divergence operator. Here the proof follows instead a perturbed saddle-point argument adapted to the Banach spaces' context and using Theorem~\ref{th:abstract}. 

Let us denote the product space 
\begin{equation}\label{def:frak}
\mathfrak{U} := \bH_\star(\bcurl_s, \Omega) \times \bH_\star^r(\vdiv, \Omega) \times \rL^2(\Omega),\end{equation}
and define the map $\cA:\mathfrak{U} \to \mathfrak{U}'$,  $(\bomega,\bu,p)\mapsto \cA(\bomega,\bu,p)$, implicitly through the following weak Brinkman equation (for a fixed $\widetilde{T} \in \rH^1_\star(\Omega)$)
\begin{align}
    a_1(\bomega,\bzeta) + b_{1}(\bzeta,\bu)&=\;0&\qquad\forall\bzeta\in {\bH_\star(\bcurl_s,\Omega)}   ,\nonumber \\
 b_1(\bomega,\bv)- a_2(\bu,\bv) + b_2(\bv,p) &=\;F(\widetilde{T};\bv)&\qquad\forall \bv\in{\bH^r_\star(\vdiv,\Omega)}, \label{eq:decoupled-Brinkman} \\
b_2(\bu,q)&=\;0&\qquad\forall q\in {\rL^2(\Omega)}.\nonumber
\end{align}
\begin{lemma}\label{lem:Brinkman}
For a fixed $\widetilde{T} \in \rH^1_\star(\Omega)$, there exists a unique tuple $(\bomega,\bu,p) \in \mathfrak{U}
$ such that the operator equation $\mathcal{A}(\bomega,\bu,p) = (0,F(\widetilde{T}),0)$ defined by \eqref{eq:decoupled-Brinkman} is satisfied. Moreover, there holds 
\begin{equation}\label{eq:Brinkman-cont-dep}
 \|\bomega\|_{\bcurl_s,\Omega} + \|\bu\|_{r,\vdiv,\Omega} + \|p\|_{0,\Omega} \lesssim \bigl(2 + \sqrt{\mu'}{+\frac{\mu}{\kappa}}\bigr) \rho \beta \|\bg\|_{\bL^\infty(\Omega)} \|\widetilde{T}\|_{1,\Omega}, 
\end{equation}
where the hidden constant depends on the inf-sup constants $\beta_1,\beta_2$.  
\end{lemma}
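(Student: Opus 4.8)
The plan is to recast \eqref{eq:decoupled-Brinkman} as a single perturbed saddle-point problem on the reduced space obtained after eliminating the pressure, and then to invoke Theorem~\ref{th:abstract}. First I would reduce to the kernel $\bV_0$ of $\bB_2$: since the third equation in \eqref{eq:decoupled-Brinkman} forces $\bu\in\bV_0$ and, for test functions $\bv\in\bV_0$, the term $b_2(\bv,p)$ vanishes, the pair $(\bomega,\bu)$ solves the reduced system $a_1(\bomega,\bzeta)+b_1(\bzeta,\bu)=0$ for all $\bzeta\in\bH_\star(\bcurl_s,\Omega)$ and $b_1(\bomega,\bv)-a_2(\bu,\bv)=F(\widetilde T;\bv)$ for all $\bv\in\bV_0$. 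This matches the abstract structure of Theorem~\ref{th:abstract} with $X=\bH_\star(\bcurl_s,\Omega)$, $Y=\bV_0$, $\sigma=\bomega$, $u=\bu$, and with $a:=a_1$, $b:=b_1$, $c:=a_2$, $f:=0$, and $g:=F(\widetilde T;\cdot)$. The decisive feature of this choice is that the Darcy term $a_2$, which enters the momentum balance with a negative sign, plays precisely the role of the perturbation $c$ (appearing as $-c$ in the abstract problem), so that both diagonal blocks $a$ and $c$ are positive.

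Next I would verify the hypotheses of Theorem~\ref{th:abstract}. Symmetry of $a_1$ and $a_2$ is immediate, and their positive semi-definiteness follows from \eqref{eq:a1-coer} and \eqref{eq:a2-pos}; the spaces $X$ and $Y$ are reflexive, the latter being a closed subspace of $\bH^r_\star(\vdiv,\Omega)$. The condition $\mathrm{Ker}(\mathbf B^*)=\{\cero\}$ is exactly the characterisation \eqref{eq:char-2}, and the inf-sup condition for $b=b_1$ over $Y=\bV_0$ is \eqref{eq:inf-sup-b1} from Lemma~\ref{lem:inf-sup}. It then remains to establish the inf-sup condition for $a_1$ on $X_0:=\mathrm{Ker}(\mathbf B)=\{\bzeta\in\bH_\star(\bcurl_s,\Omega): b_1(\bzeta,\bv)=0\ \forall\bv\in\bV_0\}$. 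Here I would show that $X_0$ is contained in the curl-free subspace $\bZ_0$ of \eqref{eq:charact-Z0}, i.e. that $\bzeta\in X_0$ implies $\bcurl\bzeta=\cero$: since $\bcurl\bzeta\in\bL^s(\Omega)$ is divergence-free with vanishing normal trace on $\Gamma$, choosing the admissible test function $\bv^\ast:=\cD_r(\mathcal{J}_s(\bcurl\bzeta))\in\bV_0$ and combining \eqref{eq-lem-S-2} with \eqref{eq-lem-p-q-2} yields $\|\bcurl\bzeta\|^s_{\bL^s(\Omega)}=\int_\Omega\bcurl\bzeta\cdot\bv^\ast=-\tfrac{1}{\sqrt{\mu'}}\,b_1(\bzeta,\bv^\ast)=0$. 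Consequently, on $X_0$ the $\bcurl_s$-norm reduces to the $\bL^2$-norm, and testing with $\bzeta$ itself together with \eqref{eq:a1-coer} delivers the required inf-sup bound with constant one.

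With the reduced problem solved, I would recover the pressure from the second equation of \eqref{eq:decoupled-Brinkman}: the functional $\bv\mapsto F(\widetilde T;\bv)-b_1(\bomega,\bv)+a_2(\bu,\bv)$ is bounded and, by the reduced equation, vanishes on $\bV_0$, so the inf-sup condition \eqref{eq:inf-sup-b2} for $b_2$ provides a unique $p\in\rL^2(\Omega)$ realising it. Uniqueness of $(\bomega,\bu)$ comes from Theorem~\ref{th:abstract} and that of $p$ from \eqref{eq:inf-sup-b2}. For the stability estimate \eqref{eq:Brinkman-cont-dep}, Theorem~\ref{th:abstract} bounds $\|\bomega\|_{\bcurl_s,\Omega}+\|\bu\|_{r,\vdiv,\Omega}$ by $\|g\|_{Y'}$, which by \eqref{eq:F-bound} is controlled by $\rho\beta\|\bg\|_{\bL^\infty(\Omega)}\|\widetilde T\|_{1,\Omega}$; the pressure bound then follows from \eqref{eq:inf-sup-b2} and the continuity estimates \eqref{eq:bound-b1} and \eqref{eq:bound-a2}, which introduce the factors $\sqrt{\mu'}$ and $\tfrac{\mu}{\kappa}$ and yield the constant $\bigl(2+\sqrt{\mu'}+\tfrac{\mu}{\kappa}\bigr)$.

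The main obstacle I anticipate is the identification of the kernel $X_0$ as a subset of the curl-free fields $\bZ_0$. Unlike in the Hilbertian setting one cannot simply test with $\bcurl\bzeta$ itself, since it lies in $\bL^s(\Omega)$ rather than in the velocity space $\bL^r(\Omega)$; the duality map $\mathcal{J}_s$ of Lemma~\ref{lem-p-q} and the divergence-free lifting $\cD_r$ of Lemma~\ref{lem:Ds} are exactly what is needed to produce an admissible test function in $\bV_0$ and to recover the $\bL^s$-norm of $\bcurl\bzeta$. Everything else reduces to bookkeeping of the boundedness constants and a correct application of the abstract theory.
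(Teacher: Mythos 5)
Your proposal is correct and follows essentially the same route as the paper: reduction to the kernel $\bV_0$ of $\bB_2$, application of Theorem~\ref{th:abstract} with $a=a_1$, $b=b_1$, $c=a_2$, and recovery of the pressure via the inf-sup condition \eqref{eq:inf-sup-b2}. The one place where you go beyond the paper's write-up is the explicit verification that the kernel $X_0=\{\bzeta:\,b_1(\bzeta,\bv)=0\ \forall\bv\in\bV_0\}$ is contained in the curl-free space $\bZ_0$ (via the test function $\cD_r(\mathcal J_s(\bcurl\bzeta))\in\bV_0$ and Lemmas~\ref{lem-p-q}--\ref{lem:Ds}); the paper invokes the coercivity \eqref{eq:a1-coer} on this kernel without spelling out that argument, so your addition is a welcome clarification rather than a deviation.
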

Before addressing the unique solvability of \eqref{eq:decoupled-Brinkman}, note that for any $\widetilde{T} \in \rH^1_\star(\Omega)$ we have $F(\widetilde{T}) \in (\bH^r_\star(\vdiv,\Omega))'$ granted by \eqref{eq:F-bound} with 
\[\|F(\widetilde{T})\|_{(\bH^r_\star(\vdiv,\Omega))'} \leq \rho \beta \|\bg\|_{\bL^\infty(\Omega)} \|\widetilde{T}\|_{1,\Omega}.\]
Now, let us consider the following reduced problem where velocity is sought in the Kernel of $\bB_2$: for fixed $\widetilde{T} \in \rH^1_\star(\Omega)$, find $(\bomega,\bu)\in \bH_\star(\bcurl_s,\Omega)\times \bV_0$ such that 
\begin{equation}
\label{eq:decoupled-Brinkman-reduced}
\begin{aligned}    a_1(\bomega,\bzeta) + b_{1}(\bzeta,\bu)&=\;0&\qquad\forall\bzeta\in {\bH_\star(\bcurl_s,\Omega)}   , \\
 b_1(\bomega,\bv)- a_2(\bu,\bv) &=\;F(\widetilde{T};\bv)&\qquad\forall \bv\in \bV_0.
\end{aligned}\end{equation}

We stress that if $(\bomega,\bu,p)\in \bH_\star(\bcurl_s,\Omega)\times \bH^r_\star(\vdiv,\Omega)\times \rL^2(\Omega)$ is the unique solution to \eqref{eq:decoupled-Brinkman}, then it is evident that the pressure can be eliminated and then $(\bomega,\bu)\in \bH_\star(\bcurl_s,\Omega)\times \bV_0$ is a solution to \eqref{eq:decoupled-Brinkman-reduced}. Conversely, if  $(\bomega,\bu)\in \bH_\star(\bcurl_s,\Omega)\times \bV_0$ solves  \eqref{eq:decoupled-Brinkman-reduced}, then there exists {a unique}  $p\in \rL^2(\Omega)$ such that $(\bomega,\bu,p)$ is a solution to \eqref{eq:decoupled-Brinkman}. {To see this, note from the closed range theorem that the image of $\bB_2^*$ is equivalent to the annihilator of $\bV_0$. Next let us set $\mathcal{G}:\bH_\star^r(\vdiv, \Omega) \to \mathbb{R}$ by the following: 
\begin{equation}\label{eq:G-problem-equiv}
\mathcal{G}\bv := F(\widetilde{T};\bv) - b_1(\bomega,\bv) + a_2(\bu,\bv) \qquad \forall \bv \in \bH^r_\star(\vdiv,\Omega).
\end{equation}
As $\mathcal{G}|_{V_0} \equiv 0$ (cf., the second equation in \eqref{eq:decoupled-Brinkman-reduced}), there exists $p \in \rL^2(\Omega)$ yielding $\bB_2^*p = \mathcal{G}$. The uniqueness of $p$ follows from the inf-sup condition \eqref{eq:inf-sup-b2}. Then given $\bu\in \bV_0$, it is clear that 
\[b_2(\bu,q) = 0 \qquad \forall q\in \rL^2(\Omega),\]
and thus the equivalence between \eqref{eq:decoupled-Brinkman} and \eqref{eq:decoupled-Brinkman-reduced} is shown.}


\begin{proof} (\textit{Proof of Lemma~\ref{lem:Brinkman}})
First we note that if we restrict ourselves to the reduced problem (in the Kernel of $\bB_2$), then the Kernel of $\bB_1^*$ is the zero vector (cf. \eqref{eq:char-2}). Next, bearing in mind 
the boundedness of the bilinear forms in \eqref{eq:bound-a1}--\eqref{eq:bound-b2}, the symmetry of the bilinear forms $a_1(\cdot,\cdot)$ and $a_2(\cdot,\cdot)$, the coercivity of $a_1(\cdot,\cdot)$ on the Kernel of the operator $\bB_1$ intersected with that of $\bB_2$ \eqref{eq:a1-coer}, the positivity of $a_2(\cdot,\cdot)$ on the Kernel of the operator $\bB_2$ \eqref{eq:a2-pos}, and the inf-sup conditions from Lemma~\ref{lem:inf-sup}; it follows that conditions (1)--(3) of Theorem~\ref{th:abstract} are met. Therefore there exists a unique  $(\bomega,\bu)\in \bH_\star(\bcurl_s,\Omega)\times \bV_0$ solution to the reduced problem \eqref{eq:decoupled-Brinkman-reduced}, and the continuous dependence on data together with the boundedness of $\widetilde{F}$ imply that 
\begin{equation}
    \label{eq:wu}
\|(\bomega,\bu)\| = \|\bomega\|_{\bcurl_s,\Omega}+ \|\bu\|_{r,\vdiv,\Omega} \leq C_{s_1}\, \rho \beta \|\bg\|_{\bL^\infty(\Omega)} \|\widetilde{T}\|_{1,\Omega},
\end{equation}
with $C_{s_1}$ depending on $C_{\Omega,r}$, $C_{Sob}$, $\beta_1$, $\kappa$ and $\mu$. 
Finally, in order to verify \eqref{eq:Brinkman-cont-dep}, we use again the inf-sup condition \eqref{eq:inf-sup-b2},  as well as the second equation in \eqref{eq:decoupled-Brinkman} to obtain 
\begin{align*} \|p\|_{0,\Omega} & \leq \frac{1}{\beta_2}\sup_{\bv\in \bH^r_\star(\vdiv,\Omega)\setminus\{\cero\}} \frac{b_2(\bv,p)}{\|\bv\|_{r,\vdiv,\Omega}} \\
& \leq  \frac{1}{\beta_2}\sup_{\bv\in \bH^r_\star(\vdiv,\Omega)\setminus\{\cero\}} \frac{|F(\widetilde{T};\bv) - b_1(\bomega,\bv) + a_2(\bu,\bv)|}{\|\bv\|_{r,\vdiv,\Omega}} \\
& {\lesssim} \frac{1}{\beta_2}\bigl(1 + \sqrt{\mu'} {+ \frac{\mu}{\kappa}}\bigr) \rho \beta \|\bg\|_{\bL^\infty(\Omega)} \|\widetilde{T}\|_{1,\Omega},
\end{align*}
where for the last step we have used {triangle inequality and the boundedness properties \eqref{eq:bound-b1} and \eqref{eq:bound-a2}}, together with \eqref{eq:wu}. 
\end{proof}

\subsection{Solvability of the decoupled thermal energy equations}
The well-posedness of the temperature equation (for a given vorticity and velocity) is addressed next. It is a straightforward consequence of the decoupling assumptions and of the Lax--Milgram lemma. 
\begin{lemma}\label{lem:energy}
 For a fixed $\tilde{\bu}\in\bV_0$,
 there exists a unique $T\in \rH^1_\star(\Omega)$ such that 
\begin{equation}\label{eq:decoupled-energy}
    a_3(T,S)+c_1(\tilde{\bu};T,S) 
    =\; G (S) + c_2(\tilde{\bu},\tilde{\bu};S)\qquad \forall S\in {\rH_\star^1(\Omega)}.
\end{equation}
Furthermore, its solution satisfies the following continuous dependence on data 
\begin{equation}\label{eq:energy-cont-dep}
{\|T\|_{1,\Omega} \leq C_{s_2} \max\{\sigma_0^{-1},\alpha^{-1}\}}
\biggl[\|g\|_{0,\Omega} + \frac{\mu}{c'\rho\kappa} \|\tilde{\bu}\|^2_{r,\vdiv,\Omega}
\biggr],
\end{equation}
where the constant $C_{s_2}$ depends on $C_{Sob}$, associated with the injection that leads to \eqref{bound-c1}--\eqref{bound-c2}.
\end{lemma}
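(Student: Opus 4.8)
The plan is to cast \eqref{eq:decoupled-energy} as a single variational problem posed on the Hilbert space $\rH^1_\star(\Omega)$ and invoke the Lax--Milgram lemma, with the divergence-free character of $\tilde{\bu}$ doing the essential work. First I would introduce the bilinear form $\mathcal{B}(T,S) := a_3(T,S) + c_1(\tilde{\bu};T,S)$ and the linear functional $\mathcal{L}(S) := G(S) + c_2(\tilde{\bu},\tilde{\bu};S)$, both on $\rH^1_\star(\Omega)$, so that \eqref{eq:decoupled-energy} reads $\mathcal{B}(T,S) = \mathcal{L}(S)$ for all $S\in\rH^1_\star(\Omega)$. Note that $\tilde{\bu}$ enters only as a fixed datum, so $\mathcal{B}$ is genuinely bilinear in $(T,S)$.

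Boundedness of $\mathcal{B}$ follows by combining the continuity of $a_3$ in \eqref{eq:bound-a3} with the trilinear estimate \eqref{bound-c1} evaluated at the frozen argument $\tilde{\bu}$, giving a continuity constant controlled by $\max\{\sigma_0,\alpha\} + C\,\|\tilde{\bu}\|_{r,\vdiv,\Omega}$. Boundedness of $\mathcal{L}$ follows from \eqref{eq:G-bound} together with \eqref{bound-c2}, yielding $\|\mathcal{L}\|_{(\rH^1_\star(\Omega))'} \lesssim \|g\|_{0,\Omega} + \frac{\mu}{\kappa c'\rho}\|\tilde{\bu}\|^2_{r,\vdiv,\Omega}$, which already exhibits exactly the two contributions appearing on the right-hand side of \eqref{eq:energy-cont-dep}.

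The key step, and the only one requiring genuine care, is coercivity. Testing with $S = T$, I would use the coercivity of $a_3$ from \eqref{eq:a3-coer} and, crucially, the antisymmetry identity \eqref{eq:c1-zero} --- valid precisely because $\tilde{\bu}\in\bV_0$ is divergence-free with compatible boundary conditions --- to kill the advective contribution:
\begin{equation*}
\mathcal{B}(T,T) = a_3(T,T) + c_1(\tilde{\bu};T,T) = a_3(T,T) \geq \min\{\sigma_0,\alpha\}\,\|T\|^2_{1,\Omega}.
\end{equation*}
Thus $\mathcal{B}$ is coercive with constant $\min\{\sigma_0,\alpha\}$, \emph{independently} of $\tilde{\bu}$. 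This is the single structural ingredient on which the argument rests: the non-symmetric advection term, which would otherwise threaten coercivity and force a smallness assumption on the data, disappears identically thanks to the skew-symmetric reformulation of $c_1$ on $\bV_0$.

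With boundedness and coercivity established, the Lax--Milgram lemma delivers a unique $T\in\rH^1_\star(\Omega)$ solving \eqref{eq:decoupled-energy}, together with the standard a priori bound $\|T\|_{1,\Omega} \leq (\min\{\sigma_0,\alpha\})^{-1}\,\|\mathcal{L}\|_{(\rH^1_\star(\Omega))'}$. Since $(\min\{\sigma_0,\alpha\})^{-1} = \max\{\sigma_0^{-1},\alpha^{-1}\}$, inserting the bound on $\|\mathcal{L}\|$ obtained above yields \eqref{eq:energy-cont-dep}, with the constant $C_{s_2}$ absorbing the Sobolev embedding constant $C_{\mathrm{Sob}}$ entering \eqref{bound-c1}--\eqref{bound-c2}. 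I do not expect any serious obstacle beyond verifying the coercivity bookkeeping, since every estimate needed is already recorded in the preceding subsection.
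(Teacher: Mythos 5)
Your proposal is correct and follows essentially the same route as the paper: freeze $\tilde{\bu}$ so that $c_1(\tilde{\bu};\cdot,\cdot)$ becomes a bounded bilinear form and $c_2(\tilde{\bu},\tilde{\bu};\cdot)$ a bounded functional, use the skew-symmetry property \eqref{eq:c1-zero} together with the coercivity \eqref{eq:a3-coer} to get a $\tilde{\bu}$-independent coercivity constant, and conclude by Lax--Milgram with the a priori bound \eqref{eq:energy-cont-dep}. Your write-up is in fact more explicit than the paper's about the bookkeeping of constants, but the argument is identical.
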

\begin{proof}
Similarly as in the proof of Lemma~\ref{lem:Brinkman} above, since the pair $(\tilde{\bomega},\tilde{\bu})\in \bH_\star(\bcurl_s,\Omega)\times \bV_0$ is given, the trilinear form $c_1(\cdot;\cdot,\cdot)$ can be regarded as a bilinear form and the trilinear form $c_2(\cdot,\cdot;\cdot)$  turns out to be a linear functional in $(\rH^1_\star(\Omega))'$. Then, the boundedness of $a_3(\cdot,\cdot)$ and of $c_1(\cdot,\cdot;\cdot)$, the coercivity \eqref{eq:a3-coer} and the skew-symmetry of $c_1(\cdot,\cdot;\cdot)$ for $\tilde{\bu}\in \bV_0$ stated in \eqref{eq:c1-zero} imply, thanks to the Lax--Milgram lemma, that there exists a unique $T$ solution to \eqref{eq:decoupled-energy}. On the other hand, the verification of the bound \eqref{eq:energy-cont-dep} readily follows from the boundedness of $c_2(\cdot,\cdot;\cdot)$, from the {coercivity constant} of $a_3(\cdot,\cdot)$, and from the estimate \eqref{eq:G-bound}. 
\end{proof}
\subsection{A fixed-point approach}
We will appeal to the Banach fixed-point theorem. For this we follow the steps used in, e.g., \cite{gatica22}. Let us now define the following solution operator 
\begin{align*}\cS_1: \ \rH^1_\star(\Omega) & \to \bH_\star(\bcurl_s,\Omega)\times \bV_0 , \\
 \widetilde{T} & \mapsto \cS_1(\widetilde{T})= \bigl(\cS_{11}(\widetilde{T}),\cS_{12}(\widetilde{T})\bigr) := (\bomega,\bu),
\end{align*}
where $(\bomega,\bu)$ is the unique solution to \eqref{eq:decoupled-Brinkman-reduced}, confirmed thanks to  Lemma~\ref{lem:Brinkman} and the equivalence between problems \eqref{eq:decoupled-Brinkman} and \eqref{eq:decoupled-Brinkman-reduced}; and the solution operator 
\[\cS_2: 
\bV_0 \to \rH^1_\star(\Omega), \qquad 
\tilde{\bu}\mapsto \cS_2(\tilde{\bu})
:= T, 
\]
where $T$ is the unique solution to \eqref{eq:decoupled-energy}, according to Lemma~\ref{lem:energy}. Owing to the well-definition of these solution operators we can properly define the operator 
\[\cF: \rH^1_\star(\Omega)\to \rH^1_\star(\Omega), \qquad 
T \mapsto \cF(T) := [\cS_2\circ\cS_{12}](T),\]
and observe that the nonlinear problem \eqref{eq:weak2} is thus equivalent to the following fixed-point equation:
\begin{equation}\label{eq:fixed-point}
\text{Find $T\in \rH^1_\star(\Omega)$ such that $\cF(T) = T$}.\end{equation}

Let us define the following data-dependent constants
\begin{equation}\label{def:C1}
C_1:= C_{s_2}
{\max\{\sigma_0^{-1},\alpha^{-1}\}
\|g\|_{0,\Omega}} \qan  
C_2:=C_{s_1}^2 C_{s_2}{\max\{\sigma_0^{-1},\alpha^{-1}\}
(\rho \beta \|\bg\|_{\bL^\infty(\Omega)})^2}
\frac{\mu}{c'\rho\kappa} 
.
\end{equation}
\begin{lemma}\label{lem:F-map-b}
Assume that
\begin{equation}\label{eq:def-r}
C_1C_2 < \frac14,
\end{equation}
and denote by $x_1:=\frac{1-\sqrt{1-4C_1 C_2}}{2C_2}$ and $x_2:=\frac{1+\sqrt{1-4C_1 C_2}}{2C_2}$ the solutions of the equation $C_2 x^2 -x +C_1 =0$. Then, given {$R>0$} such that $x_1\leq R \leq x_2$, we have that $\cF$ maps the following closed ball in $\rH^1_\star(\Omega)$ into itself 
\[ \rY^R :=\{ S \in \rH^1_\star(\Omega): \|S\|_{1,\Omega} \leq R\}. \]
\end{lemma}
\begin{proof}
Recalling  the continuous dependence on data from \eqref{eq:wu} and \eqref{eq:energy-cont-dep}, and taking $\widetilde{T} \in\rY^R$, and $(\wt\bomega,\wt\bu)=(\cS_{11}(\widetilde{T}),\cS_{12}(\widetilde{T}))$, we have that  
\[\|(\cS_{11}(\widetilde{T}),\cS_{12}(\widetilde{T}))\| \leq C_{s_1}\, \rho \beta \|\bg\|_{\bL^\infty(\Omega)} \|\widetilde{T}\|_{1,\Omega} \leq C_{s_1}\,\rho \beta \|\bg\|_{\bL^\infty(\Omega)} R,\]
and 
\begin{align*} \| \widetilde{T} \|_{1,\Omega} & 
\leq C_{s_2} {\max\{\sigma_0^{-1},\alpha^{-1}\}}\bigl[\|g\|_{0,\Omega} + \frac{\mu}{c'\rho\kappa} \|\tilde{\bu}\|^2_{r,\vdiv,\Omega}
\bigr], \\
& 
\leq C_{s_2}
{\max\{\sigma_0^{-1},\alpha^{-1}\}\biggl[\|g\|_{0,\Omega} + \frac{\mu}{c'\rho\kappa} 
\|(\cS_{11}(\widetilde{T}),\cS_{12}(\widetilde{T}))\|^2\biggr]},
\end{align*}
respectively. Then, appealing to 
the definition of $\cF(\widetilde{T}) := [\cS_2\circ
\cS_{12}
](\widetilde{T})$ and the definition of the constants in \eqref{def:C1}, we can assert that 
\begin{align*}
\|\cF(\widetilde{T}) \|_{1,\Omega} & =    \|\cS_2(
\cS_{12}(\widetilde{T})) \|_{1,\Omega} \\
& \leq C_{s_2} 
{\max\{\sigma_0^{-1},\alpha^{-1}\}
\biggl[\|g\|_{0,\Omega} + 
\frac{\mu}{c'\rho\kappa} 
(C_{s_1}\,\rho \beta \|\bg\|_{\bL^\infty(\Omega)} R)^2\biggr]}\\ 
& 
= {C_1} 
+ C_2 R^2\\
& \leq R,  
\end{align*}
where the last line comes from $x_1 \leq R \leq x_2$ and 
the assumption \eqref{eq:def-r}. This completes the proof.
\end{proof}

\begin{lemma}
 The map $\cF$ is Lipschitz continuous in a neighbourhood of the origin.   
\end{lemma}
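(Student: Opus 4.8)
The plan is to establish Lipschitz continuity of the two constituent operators $\cS_{12}$ and $\cS_2$ separately, control the relevant norms on a bounded ball, and then compose. First I would exploit the fact that the reduced Brinkman problem \eqref{eq:decoupled-Brinkman-reduced} depends on the temperature datum only through the affine form $F(\widetilde T;\cdot)$. Given $\widetilde T_1,\widetilde T_2\in\rH^1_\star(\Omega)$ with $(\bomega_i,\bu_i)=\cS_1(\widetilde T_i)$, the difference $(\bomega_1-\bomega_2,\bu_1-\bu_2)$ solves the same problem but with right-hand side $\widetilde F(\widetilde T_1-\widetilde T_2;\cdot)$, since the constant contribution $-T_0\bg$ in $F$ cancels. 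Applying the stability bound \eqref{eq:wu} to this difference yields
\[\|\bu_1-\bu_2\|_{r,\vdiv,\Omega}\leq C_{s_1}\,\rho\beta\|\bg\|_{\bL^\infty(\Omega)}\,\|\widetilde T_1-\widetilde T_2\|_{1,\Omega},\]
so that $\cS_{12}$ is globally Lipschitz with the indicated constant.

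Next I would treat $\cS_2$, which is genuinely nonlinear because of the advective form $c_1$ and the quadratic source $c_2$. For $\tilde{\bu}_1,\tilde{\bu}_2\in\bV_0$ with $T_i=\cS_2(\tilde{\bu}_i)$, I would subtract the two instances of \eqref{eq:decoupled-energy} and split the differences using bilinearity as
\[c_1(\tilde{\bu}_1;T_1,S)-c_1(\tilde{\bu}_2;T_2,S)=c_1(\tilde{\bu}_1;T_1-T_2,S)+c_1(\tilde{\bu}_1-\tilde{\bu}_2;T_2,S),\]
\[c_2(\tilde{\bu}_1,\tilde{\bu}_1;S)-c_2(\tilde{\bu}_2,\tilde{\bu}_2;S)=c_2(\tilde{\bu}_1,\tilde{\bu}_1-\tilde{\bu}_2;S)+c_2(\tilde{\bu}_1-\tilde{\bu}_2,\tilde{\bu}_2;S),\]
and then test with $S=T_1-T_2$. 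The crucial cancellation is that $c_1(\tilde{\bu}_1;T_1-T_2,T_1-T_2)=0$ by the skew-symmetry \eqref{eq:c1-zero} (valid since $\tilde{\bu}_1\in\bV_0$), which removes the only term carrying the unknown in a form not controllable by the data. Using coercivity \eqref{eq:a3-coer} on the left and the boundedness \eqref{bound-c1}--\eqref{bound-c2} on the right, and then dividing by $\|T_1-T_2\|_{1,\Omega}$, I obtain
\[\|T_1-T_2\|_{1,\Omega}\lesssim \|\tilde{\bu}_1-\tilde{\bu}_2\|_{r,\vdiv,\Omega}\Bigl(\|T_2\|_{1,\Omega}+\tfrac{\mu}{c'\rho\kappa}\bigl(\|\tilde{\bu}_1\|_{r,\vdiv,\Omega}+\|\tilde{\bu}_2\|_{r,\vdiv,\Omega}\bigr)\Bigr).\]
This shows $\cS_2$ is Lipschitz, but only locally, since the amplification factor grows with the norms of the arguments and of the output.

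Finally I would compose the two estimates on the ball $\rY^R$ from Lemma~\ref{lem:F-map-b}. For $T_1,T_2\in\rY^R$, set $\tilde{\bu}_i=\cS_{12}(T_i)$ and $\cF(T_i)=\cS_2(\tilde{\bu}_i)$. The bound \eqref{eq:wu} gives $\|\tilde{\bu}_i\|_{r,\vdiv,\Omega}\leq C_{s_1}\rho\beta\|\bg\|_{\bL^\infty(\Omega)}R$, while Lemma~\ref{lem:F-map-b} ensures $\|\cF(T_i)\|_{1,\Omega}\leq R$ (alternatively, the output norm can be controlled directly via \eqref{eq:energy-cont-dep}). Hence the amplification factor in the $\cS_2$ estimate is bounded by a constant depending only on the data and $R$; combining it with the Lipschitz bound for $\cS_{12}$ yields
\[\|\cF(T_1)-\cF(T_2)\|_{1,\Omega}\leq L(R)\,\|T_1-T_2\|_{1,\Omega},\]
which is the claim. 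I expect the main obstacle to be precisely the local nature of the estimate: the skew-symmetry \eqref{eq:c1-zero} is what makes the nonlinear energy equation tractable, but without the a priori bounds on $\|\tilde{\bu}_i\|_{r,\vdiv,\Omega}$ and $\|\cF(T_i)\|_{1,\Omega}$ supplied by the ball-mapping lemma the amplification factor would be uncontrolled, which is exactly why the Lipschitz property holds only near the origin.
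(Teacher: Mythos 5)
Your proposal is correct and follows essentially the same route as the paper: the same Lipschitz bound for $\cS_{12}$ via the stability estimate \eqref{eq:wu} applied to the difference problem with datum $\widetilde F(T_1-T_2;\cdot)$, the same add-and-subtract decompositions of $c_1$ and $c_2$ with the cancellation \eqref{eq:c1-zero} after testing with $T_1-T_2$, and the same composition on the ball $\rY^R$ using the a priori bounds to control the amplification factor.
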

\begin{proof}
For $T_1,T_2\in \rH^1_\star(\Omega)$ consider the unique solutions $(\bomega_1,\bu_1)$ and $(\bomega_2,\bu_2)$ associated with each problem of the type \eqref{eq:decoupled-Brinkman-reduced}. Subtracting the resulting problems and using the bilinearity of $\widetilde{F}$, we have that $(\bomega_1-\bomega_2,\bu_1-\bu_2)$ is the unique solution of the reduced problem 
\begin{align*}
    a_1(\bomega_1-\bomega_2,\bzeta) + b_{1}(\bzeta,\bu_1-\bu_2)&=\;0&\qquad\forall\bzeta\in {\bH_\star(\bcurl_s,\Omega)}   ,\nonumber \\
 b_1(\bomega_1-\bomega_2,\bv)- a_2(\bu_1-\bu_2,\bv)  &=\;\widetilde{F}(T_1-T_2;\bv)&\qquad\forall \bv\in{\bV_0}.
\end{align*}
Then, from the definition of the solution operator $\cS_1$ we can readily obtain  
\begin{align}
\nonumber    \|\cS_1(T_1)-\cS_{1}(T_2)\| & =  \|(\bomega_1,\bu_1)-(\bomega_2,\bu_2)\| \\
\nonumber    & \leq \|\bomega_1-\bomega_2\|_{\bcurl_s,\Omega}+ \|\bu_1-\bu_2\|_{r,\vdiv,\Omega} \\
    & \leq 
    C_{s_1}\rho\beta\|\bg\|_{\bL^\infty(\Omega)}\|T_1-T_2\|_{1,\Omega},\label{eq:S1-lip}
\end{align}
where we have used the continuous dependence on data \eqref{eq:wu}. 

Analogously as above, for $(\bomega_1,\bu_1),(\bomega_2,\bu_2) \in \bH_\star(\bcurl_s,\Omega)\times \bV_0$ 
let $T_1,T_2 \in \rH^1_\star(\Omega)$ be the unique solutions to each problem of the type \eqref{eq:decoupled-energy}. Subtracting these problems gives us 
\begin{align*}
     a_3(T_1-T_2,S)+c_1(\bu_1;T_1,S)-c_1(\bu_2;T_2,S)-c_2(\bu_1,\bu_1;S)    +c_2(\bu_2,\bu_2;S)     &=0 \quad \forall S\in {\rH_\star^1(\Omega)},
\end{align*}
and after adding and subtracting the terms $c_1(\bu_1;T_2,S)$ and  $c_2(\bu_1,\bu_2;S)$, then taking $S = T_1 - T_2 \in \rH_\star^1(\Omega)$ as test function, and using the coercivity of the bilinear form $a_3(\cdot,\cdot)$ together with the property \eqref{eq:c1-zero}, we get 
\begin{equation*}
\begin{array}{l}
\min\{\sigma_0,\alpha\} \|T_1-T_2\|^2_{1,\Omega} \leq a_3(T_1-T_2,T_1-T_2) \\
\ds \qquad = -c_1(\bu_1-\bu_2;T_2,T_1-T_2)+c_2(\bu_1-\bu_2;\bu_2,T_1-T_2)+c_2(\bu_1,\bu_1-\bu_2;T_1-T_2).
\end{array}  
\end{equation*}
Therefore we can combine this estimate with the definition of the map $\cS_2$ and the boundedness properties \eqref{bound-c1}--\eqref{bound-c2}, and then divide by $\|T_1-T_2\|_{1,\Omega}$ on both sides of the inequality to arrive at 
\begin{align}
\nonumber  &   \|\cS_2(\bu_1)-\cS_2(\bu_2)\|_{1,\Omega}  =  \|T_1-T_2\|_{1,\Omega} \\
    &\quad \leq C_{s_2}\max\{\sigma_0^{-1},\alpha^{-1}\}( \|T_2\|_{1,\Omega}+ \|\bu_1\|_{r,\vdiv,\Omega} + \|\bu_2\|_{r,\vdiv,\Omega} )\|\bu_1-\bu_2\|_{r,\vdiv,\Omega}.\label{eq:S2-lip}
\end{align}
Now, let $T_1, T_2, \wt T_1, \wt T_2\in \rY^R$, be such that $\wt T_1 = \cF(T_1)$ and $\wt T_2 = \cF(T_2)$.
According to the definition of $\cF$, from \eqref{eq:S1-lip} and \eqref{eq:S2-lip}, using that $\|(\cS_{11}(T_1),\cS_{12}(T_1))\|$ and $ \|(\cS_{11}(T_2),\cS_{12}(T_2))\|$ satisfy \eqref{eq:wu}, and $T_1,\, T_2, \,\wt T_2\in \rY^R$,  we deduce that
\begin{align}\label{eq:F-lip}
&\|\cF(T_1) - \cF(T_2) \|_{1,\Omega}  =    \|\cS_2(\cS_{12}(T_1)) - \cS_2(\cS_{12}(T_2))\|_{1,\Omega} \nonumber \\
&\leq C_{s_2}\max\{\sigma_0^{-1},\alpha^{-1}\}\Big(\|\wt T_2\|_{1,\Omega}+ \|\cS_{12}(T_1)\|_{r,\vdiv,\Omega} + \|\cS_{12}(T_2)\|_{r,\vdiv,\Omega} \Big)\|\cS_{12}(T_1)-\cS_{12}(T_2)\|_{r,\vdiv,\Omega} \nonumber\\
&\leq C_{s_2}\max\{\sigma_0^{-1},\alpha^{-1}\}\Big( \|\wt T_2\|_{1,\Omega}+ C_{s_1}\rho \beta \|\bg\|_{\bL^\infty(\Omega)} (\|T_1\|_{1,\Omega} + \|T_2\|_{1,\Omega} )\Big)\|\cS_{12}(T_1)-\cS_{12}(T_2)\|_{r,\vdiv,\Omega} \nonumber\\
&\leq R\,C_{s_2} \max\{\sigma_0^{-1},\alpha^{-1}\}( 1 + 2C_{s_1} \rho \beta \|\bg\|_{\bL^\infty(\Omega)}) C_{s_1}\rho\beta\|\bg\|_{\bL^\infty(\Omega)} \|T_1-T_2\|_{1,\Omega}.
\end{align}
\end{proof}

We are ready now to prove the main result of this section, that is, the existence and uniqueness of solution of problem \eqref{eq:weak2-1}--\eqref{eq:weak2-4}. 

\begin{theorem}
 Assume that $C_1C_2 < 1/4$, where $C_1,C_2$ are as in \eqref{def:C1}. Then, given $R>0$ such that
\begin{equation}\label{eq:small-r}
 x_1\leq R\leq x_2 \qan R\,C_{s_1} C_{s_2}\rho\beta\|\bg\|_{\bL^\infty(\Omega)} \max\{\sigma_0^{-1},\alpha^{-1}\}( 1 + 2C_{s_1} \rho \beta \|\bg\|_{\bL^\infty(\Omega)}) <1,
\end{equation} 
there exists a unique solution to \eqref{eq:fixed-point}, and equivalently, to \eqref{eq:weak2}.    
\end{theorem}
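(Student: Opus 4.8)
The plan is to apply the Banach fixed-point theorem to the operator $\cF:\rH^1_\star(\Omega)\to\rH^1_\star(\Omega)$ defined by $\cF = \cS_2\circ\cS_{12}$, restricted to the closed ball $\rY^R$. The previous lemmas supply precisely the two hypotheses needed: Lemma~\ref{lem:F-map-b} guarantees that $\cF(\rY^R)\subseteq\rY^R$ whenever $C_1 C_2 < 1/4$ and $x_1 \leq R \leq x_2$, so $\cF$ is a self-map of a complete metric space (a closed ball in a Banach space). The contraction estimate \eqref{eq:F-lip} provides the Lipschitz bound
\[
\|\cF(T_1)-\cF(T_2)\|_{1,\Omega} \leq L_R\,\|T_1-T_2\|_{1,\Omega}\qquad \forall\, T_1,T_2\in\rY^R,
\]
with Lipschitz constant
\[
L_R := R\,C_{s_1} C_{s_2}\,\rho\beta\|\bg\|_{\bL^\infty(\Omega)}\max\{\sigma_0^{-1},\alpha^{-1}\}\bigl(1 + 2C_{s_1}\rho\beta\|\bg\|_{\bL^\infty(\Omega)}\bigr).
\]

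First I would verify that $\rY^R$ is a complete metric space under the $\rH^1_\star(\Omega)$-norm metric, which is immediate since it is a closed subset of the Banach space $\rH^1_\star(\Omega)$. Next I would invoke Lemma~\ref{lem:F-map-b} under the standing assumption $C_1 C_2 < 1/4$ and the constraint $x_1 \leq R \leq x_2$ from \eqref{eq:small-r} to conclude $\cF:\rY^R\to\rY^R$. I would then read off from \eqref{eq:F-lip} that $\cF$ is Lipschitz on $\rY^R$ with constant $L_R$, and observe that the \emph{second} inequality in \eqref{eq:small-r} is exactly the statement $L_R < 1$, so that $\cF$ is a genuine contraction on $\rY^R$. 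The Banach fixed-point theorem then yields a unique $T\in\rY^R$ with $\cF(T)=T$.

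Having obtained the fixed point $T$, I would set $(\bomega,\bu) := \cS_1(T) = (\cS_{11}(T),\cS_{12}(T))\in\bH_\star(\bcurl_s,\Omega)\times\bV_0$ via Lemma~\ref{lem:Brinkman}, and recover the associated pressure $p\in\rL^2(\Omega)$ through the equivalence between the reduced problem \eqref{eq:decoupled-Brinkman-reduced} and the full Brinkman problem \eqref{eq:decoupled-Brinkman} established earlier (using the inf-sup condition \eqref{eq:inf-sup-b2} via the map $\cG$ in \eqref{eq:G-problem-equiv}). By construction $\cS_2(\cS_{12}(T)) = T$ means $T$ solves the energy equation \eqref{eq:decoupled-energy} with datum $\tilde{\bu} = \bu$, and $(\bomega,\bu,p)$ solves the Brinkman system \eqref{eq:decoupled-Brinkman} with datum $\widetilde{T} = T$; together these reproduce the coupled system \eqref{eq:weak2}, so $(\bomega,\bu,p,T)$ is the desired solution. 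Uniqueness of the full tuple follows from uniqueness of the fixed point $T$ combined with the unique solvability granted by Lemmas~\ref{lem:Brinkman} and \ref{lem:energy} for the decoupled subproblems.

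The only delicate point is purely bookkeeping rather than analytical: one must check that the single radius $R$ can simultaneously satisfy both conditions in \eqref{eq:small-r}, i.e.\ that the interval $[x_1,x_2]$ (on which $\cF$ is a self-map) has nonempty intersection with the set of radii for which $L_R<1$. Since $L_R$ is linear and increasing in $R$, the second condition holds precisely for $R$ below an explicit threshold, and the smallness of the data implicit in $C_1 C_2 < 1/4$ ensures that the lower endpoint $x_1$ lies below that threshold; taking $R$ near $x_1$ then meets both requirements. I expect this compatibility verification to be the main (and essentially only) obstacle, and it is resolved entirely by the hypotheses already packaged into \eqref{eq:small-r}.
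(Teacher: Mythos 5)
Your proposal is correct and follows essentially the same route as the paper: Lemma~\ref{lem:F-map-b} gives the self-map property of $\cF$ on $\rY^R$ under the first condition in \eqref{eq:small-r}, the estimate \eqref{eq:F-lip} together with the second condition gives the contraction, and the Banach fixed-point theorem concludes. Your additional remarks on recovering the pressure and on the compatibility of the two constraints on $R$ are harmless elaborations of what the paper leaves implicit in ``the proof then follows from the definition of the fixed-point map.''
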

\begin{proof}
    We recall from the previous analysis that the first assumption in \eqref{eq:small-r} ensures that $\cF$ maps $Y^R$ into itself (see Lemma~\ref{lem:F-map-b}). In addition, from \eqref{eq:F-lip} and the second assumption in \eqref{eq:small-r}, we have that $\cF$ is a contraction mapping, which together with the Banach fixed-point theorem implies that $\cF$ has a unique fixed point in $Y^R$. The proof then follows from the definition of the fixed-point map. 
\end{proof}

\section{Galerkin scheme and well-posedness of the discrete problem}\label{sec:FE}
Here we derive a discrete formulation and show, under appropriate assumptions of the bilinear forms and finite dimensional spaces, that the discrete problem has a unique solution. 
\subsection{Preliminaries}
Let $\cT_h$ denote a family of non-degenerate triangular / tetrahedral meshes on $\Omega$ and denote by $\cE_h$ the set of all facets (edges in 2D) in the mesh. 
By $h_K$ we denote the diameter of the element $K$ and by $h_F$ we denote the length/area of the facet $F$. As usual, by $h$ we denote the maximum of the diameters of elements in $\cT_h$. For all meshes we assume that they are sufficiently regular (there exists a uniform positive constant $\eta_1$ such that each element $K$ is star-shaped with respect to a ball of radius greater than $\eta_1 h_K$. It is also assumed that there exists $\eta_2>0$ such that for each element and every facet $F\in \partial K$, we have that $h_F\geq \eta_2 h_K$, see, e.g., \cite{quarteroni09,ern21}). 

Let us consider the following generic finite dimensional subspaces of the trial-test spaces 
\[\bZ_h \subset \bH_\star(\bcurl_s,\Omega), \quad 
\bV_h \subset \bH^r_\star(\vdiv,\Omega), \quad \rQ_h \subset \rL^2(\Omega), \quad \rY_h\subset \rH^1_\star(\Omega),\]
and denote by $\bV_{0,h}$ the discrete Kernel of $b_2(\cdot,\cdot)$, that is 
\[ \bV_{0,h}: = \{ \bv_h\in \bV_h: b_2(\bv_h,q_h) = 0 \quad \forall q_h\in \rQ_h \}.\]
We also take into account the subspace $\bV_{h,s}\subset \bH^s_\star(\vdiv,\Omega)$.

We consider along this section the following assumptions on these spaces  
\begin{enumerate}
[label=(\textbf{A\arabic*}), ref=\textrm{(A\arabic*)}]
   \item \label{A1} there exists $\hat{\beta}_1>0$ independently of $h$ 
such that 
      \[ \sup_{\bzeta_h \in \bZ_h\setminus\{\cero\}} \frac{b_1(\bzeta_h,\bv_h)}{\|\bzeta_h\|_{\bcurl_s,\Omega}}  \geq \hat{\beta}_1 \|\bv_h\|_{r,\vdiv,\Omega} \qquad \forall \bv_h\in \bV_{0,h},\]
  \item \label{A2} there exists $\hat{\beta}_2>0$ independently of $h$ 
such that   \[    \sup_{\bv_h \in \bV_h\setminus\{\cero\}} \frac{b_2(\bv_h,q_h)}{\|\bv_h\|_{r,\vdiv,\Omega}}  \geq \hat{\beta}_2 \|q_h\|_{0,\Omega} \qquad \forall q_h\in \rQ_h,\]
    \item \label{A3} $\vdiv\bV_h \subseteq \rQ_h$. 
    \item \label{A4} {$\bcurl \bZ_h \subseteq \bV_{h,s}$}.
\end{enumerate}
The following characterisation is a consequence of the definition of the bilinear form $b_2(\cdot,\cdot)$ and  assumption \ref{A3}:  
\[ \bV_{0,h}= \{ \bv_h\in \bV_h: \vdiv\bv_h = 0\}.\]
Similarly, from assumption \ref{A4} we have that 
\[ \bZ_{0,h} =  \{ \bzeta\in \bZ_h: \bcurl\bzeta_h = \cero\},\]
which implies that the discrete Kernel is contained in the continuous one, yielding in turn that 
\[ a_1(\bzeta_h,\bzeta_h) = \|\bzeta_h\|^2_{\bcurl_s,\Omega} \qquad \forall \bzeta_h \in \bZ_{0,h}. \]
The discrete problem is a system of  nonlinear algebraic equations that reads as follows: find $(\bomega_h,\bu_h,p_h,T_h)\in \bZ_h\times\bV_h\times \rQ_h\times \rY_h$ such that
\begin{subequations}\label{eq:galerkin}
\begin{align}
a_1(\bomega_h,\bzeta_h)+b_{1}(\bzeta_h,\bu_h)&=\;0& \forall\bzeta_h\in\bZ_h,\\
b_1(\bomega_h,\bv_h)-a_2(\bu_h,\bv_h)+b_2(\bv_h,p_h) &=\;F(T_h;\bv_h)&\forall\bv_h\in\bV_h,\\
b_2(\bu_h,q_h)&=\;0&\forall q_h\in \rQ_h,\\
a_3(T_h,S_h) + c_1(\bu_h; T_h,S_h) - c_2(\bu_h,\bu_h;S_h) 
& =\; G(S_h)&\forall S_h\in \rY_h.
\end{align}
\end{subequations}

\subsection{Unique solvability of the Galerkin method}
We shall proceed in much the same way as in the continuous case where we decouple the system and seek a fixed point. First we consider the reduced problem
in the discrete kernel of $\bB_2$. That is, for a fixed $\widetilde{T}_h \in \rY_h$ we seek $(\bomega_h, \bu_h) \in \bZ_h \times \bV_{0,h}$ such that 
\begin{equation}\label{eq:Brinkman-reduced-disc}
\begin{aligned}
a_1(\bomega_h,\bzeta_h) + b_{1}(\bzeta_h,\bu_h)&=\;0&\qquad\forall\bzeta_h\in \bZ_h   , \\
 b_1(\bomega_h,\bv_h)- a_2(\bu_h,\bv_h) &=\;F(\widetilde{T}_h;\bv_h)&\qquad\forall \bv_h\in \bV_{0,h}.
\end{aligned}
\end{equation}
The discrete solution theory for the above is another consequence of results shown in, and particularly by, \cite[Theorem 3.5]{correa22}, which we state here for convenience.
\begin{theorem}\label{th:discrete-abstract}
Let $\{X_h\}_{h > 0}$ and $\{Y_h\}_{h>0}$ be families of finite dimensional subspaces of $X,Y$,
respectively. Consider the discrete restrictions $a:X_h \times X_h \to \mathbb{R}$,
$b:X_h \times Y_h \to \mathbb{R}$, and $c:Y_h \times Y_h \to \mathbb{R}$.
Let $\bB: X_h \to Y_h$ be the discrete induced operator by $b$ and denote its kernel by $X_{0,h}$.
Assume that the discrete transpose $\bB^*$ has trivial kernel such that $\ker(\bB^*) = \{0\}$. 
Then, if
\begin{enumerate}
    \item $a(\cdot,\cdot)$ and $c(\cdot,\cdot)$ are symmetric and semi-positive definite over $X_h$ and $Y_h$, respectively,
    \item there exists constant $\tilde{\alpha}_d > 0$ which is independent of $h$, such that,
    \[
    \sup_{\tau_h \in X_{0,h} \setminus \{0\}} \frac{a(\sigma_{0,h}, \tau_h)}{\norm{\tau_h}_{X_h}}
    \geq \tilde{\alpha}_d \norm{\sigma_0}_{X_h}
    \qquad \forall \sigma_{0,h} \in X_{0,h},
    \]
    \item there exists constant $\tilde{\beta}_d > 0$ which is independent of $h$, such that,
    \[
    \sup_{\tau_h \in X_{0,h} \setminus \{0\}} \frac{b(\tau_h, v_h)}{\norm{\tau_h}_{X_h}}
    \geq \tilde{\beta}_d \norm{v}_{Y_h}
    \qquad \forall v_h \in Y_h,
    \]
\end{enumerate}
then for each $(f,g) \in X' \times Y'$ there exists a unique $(\sigma_h, u_h) \in X_h \times Y_h$
solution to the following discrete perturbed saddle-point problem
\begin{align*}
    a(\sigma_h,\tau_h) + b(\tau_h,u_h)&=\;f(\tau_h)\qquad\forall\tau\in X_h, \\
 b(\sigma_h,v_h)- c(u_h,v_h) &=\;g(v_h)\qquad\forall v_h\in Y_h.
\end{align*}
Moreover, the solution satisfies the stability bound  
\[\|(\sigma_h,u_h)\|_{X\times Y} \lesssim \|f\|_{X'}+\|g\|_{Y'},\]
where the hidden constant depends only on the $h$-independent constants $\|a\|,\|b\|,\tilde{\alpha}_d$, and $\tilde{\beta}_d$.
\end{theorem}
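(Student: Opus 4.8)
The plan is to obtain Theorem~\ref{th:discrete-abstract} as a direct specialisation of the continuous abstract result Theorem~\ref{th:abstract}. The crucial observation is that every finite dimensional normed space is reflexive, so $X_h$ and $Y_h$ qualify as reflexive Banach spaces in the sense required by Theorem~\ref{th:abstract}. Moreover, the restrictions of $a(\cdot,\cdot)$, $b(\cdot,\cdot)$ and $c(\cdot,\cdot)$ to these subspaces are automatically bounded (any bilinear form on a finite dimensional space is continuous), and the induced operator $\bB$ maps $X_h$ into $Y_h'$, which we identify with $Y_h$ in the usual way. First I would record these structural facts and then simply check that each numbered hypothesis of Theorem~\ref{th:abstract} is the exact discrete counterpart of what is assumed here.

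The verification is then essentially a transcription: hypothesis~(1) (symmetry and semi-positive definiteness of $a$ and $c$) is assumed verbatim; the triviality of $\ker(\bB^*)$ is assumed; hypothesis~(2) (the inf-sup of $a$ over the kernel $X_{0,h}$, with constant $\tilde\alpha_d$) and hypothesis~(3) (the inf-sup of $b$, with constant $\tilde\beta_d$) are precisely the two displayed estimates in the statement. Applying Theorem~\ref{th:abstract} to the pair $(X_h,Y_h)$ then yields, for every right-hand side $(f,g)$, a unique solution $(\sigma_h,u_h)\in X_h\times Y_h$ together with the stability bound $\|(\sigma_h,u_h)\|_{X\times Y}\lesssim \|f\|_{X'}+\|g\|_{Y'}$.

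The step that requires genuine care --- and the one I would single out as the main obstacle --- is the claim of $h$-independence of the stability constant. In the abstract theorem the hidden constant depends only on $\|a\|$, $\|c\|$, $\tilde\alpha$ and $\tilde\beta$; I would argue that, in the discrete setting, the boundedness constants are inherited unchanged from the continuous forms (the discrete forms are mere restrictions of the global ones), while $\tilde\alpha_d$ and $\tilde\beta_d$ are $h$-independent by assumption. Since the constant produced by Theorem~\ref{th:abstract} is an explicit expression in these quantities, it is itself independent of $h$, which is exactly the uniform stability asserted here.

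As a cross-check (and an alternative route that sidesteps the reflexivity argument) I would note that on $X_h\times Y_h$ the problem is a \emph{square} linear system, so existence is equivalent to uniqueness. Testing the homogeneous problem with $(\tau_h,v_h)=(\sigma_h,u_h)$ and subtracting the two equations gives $a(\sigma_h,\sigma_h)+c(u_h,u_h)=0$, and semi-positive definiteness forces $a(\sigma_h,\sigma_h)=c(u_h,u_h)=0$. One then uses the inf-sup of $b$ together with the kernel decomposition and the inf-sup of $a$ on $X_{0,h}$ to upgrade this to $\sigma_h=0$ and $u_h=0$, recovering uniqueness and hence well-posedness. This merely reproduces in finite dimensions the mechanism already packaged in Theorem~\ref{th:abstract}.
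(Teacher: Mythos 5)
Your proposal is correct, but it takes a different route from the paper: the paper offers no proof of Theorem~\ref{th:discrete-abstract} at all, simply quoting it as \cite[Theorem 3.5]{correa22} ``for convenience'', whereas you derive it as a direct corollary of the continuous abstract result Theorem~\ref{th:abstract} applied to the pair $(X_h,Y_h)$. Your derivation is sound: finite-dimensional subspaces are reflexive Banach spaces, the restricted forms are bounded with continuity constants no larger than $\|a\|$, $\|b\|$, $\|c\|$, and the remaining hypotheses (triviality of $\ker(\bB^*)$, symmetry and semi-positive definiteness, and the two inf-sup conditions with $h$-independent constants $\tilde\alpha_d$, $\tilde\beta_d$) are assumed verbatim, so the stability constant produced by Theorem~\ref{th:abstract} is automatically $h$-uniform --- the one small point worth making explicit is that you should feed the theorem the \emph{continuous} continuity constants (which remain valid bounds for the restricted forms) rather than the possibly $h$-dependent discrete ones, so that no monotonicity argument is needed. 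What your approach buys is a self-contained reduction of the discrete statement to the continuous one already present in the paper, rather than a second appeal to the external reference; your finite-dimensional cross-check (square system, energy identity $a(\sigma_h,\sigma_h)+c(u_h,u_h)=0$, then the inf-sup conditions to force $\sigma_h=u_h=0$) is also a correct and more elementary alternative for existence and uniqueness, though it does not by itself deliver the quantitative stability bound. Incidentally, your reading silently corrects two typographical slips in the paper's statement (the supremum in hypothesis~(3) should run over $X_h\setminus\{0\}$ rather than $X_{0,h}\setminus\{0\}$, since $b(\tau_h,\cdot)$ vanishes on the kernel, and $\bB$ should map into $Y_h'$); it would be worth flagging these explicitly.
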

With the above result we are able to immediately present the well-posedness of the discrete Brinkman problem under a fixed temperature.
\begin{lemma}\label{lem:Brinkman-discrete-solve}
    Under assumptions \ref{A1}--\ref{A2}, for any fixed $\widetilde{T}_h \in \rY_h$ there exists a unique $(\bomega_h, \bu_h, p_h) \in \bZ_h \times \bV_{h} \times \rQ_h$ such that the discrete operator equation $\mathcal{A}(\bomega_h, \bu_h, p_h) = (0,F(\widetilde{T}_h),0)$ is satisfied, implicitly defined by the system,
    \begin{align*}
    a_1(\bomega_h,\bzeta_h) + b_{1}(\bzeta_h,\bu_h)&=\;0&\qquad\forall\bzeta_h\in \bZ_h   ,\nonumber \\
 b_1(\bomega_h,\bv_h)- a_2(\bu_h,\bv_h) + b_2(\bv_h,p_h) &=\;F(\widetilde{T}_h;\bv_h)&\qquad\forall \bv_h\in\bV_h, \\
b_2(\bu_h,q_h)&=\;0&\qquad\forall q_h\in \rQ_h.\nonumber
\end{align*}
Furthermore we have the continuity of the discrete solution mapping by 
\[
    \norm{\bomega_h}_{\bcurl_s,\Omega} + \norm{\bu_h}_{r,\vdiv,\Omega} + \norm{p_h}_{0,\Omega}
    \lesssim (2 + \sqrt{\mu'} + \frac{\mu}{\kappa}) \rho \beta \norm{\bg}_{\bL^\infty} \|\widetilde{T}_h\|_{1,\Omega},
\]
where the hidden constant is $h$-independent and depends only on $\hat{\beta}_1, \hat{\beta}_2$ and $C_{\Omega,r}$. 
\end{lemma}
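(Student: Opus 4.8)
The plan is to reproduce, at the discrete level, the argument used for the continuous Lemma~\ref{lem:Brinkman}, splitting the task into two stages: first I would establish unique solvability of the reduced problem \eqref{eq:Brinkman-reduced-disc} posed in the discrete kernel $\bV_{0,h}$ by invoking the abstract discrete result Theorem~\ref{th:discrete-abstract}, and then I would recover the discrete pressure $p_h$ through the discrete inf-sup condition \ref{A2}. The equivalence between the full discrete system and its reduced counterpart is the discrete mirror of the continuous equivalence, and rests on the characterisation $\bV_{0,h} = \{\bv_h \in \bV_h : \vdiv \bv_h = \cero\}$ obtained from assumption \ref{A3}.

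To apply Theorem~\ref{th:discrete-abstract} to \eqref{eq:Brinkman-reduced-disc}, I would identify $X_h = \bZ_h$ with $\sigma_h = \bomega_h$, $Y_h = \bV_{0,h}$ with $u_h = \bu_h$, and set $a = a_1$, $b = b_1$, $c = a_2$, while $f = 0$ and $g = F(\widetilde{T}_h)$. The symmetry and semi-positive definiteness of $a_1$ and $a_2$ are immediate from their definitions. For the triviality of the transpose kernel, $\ker(\bB_1^*) = \{\cero\}$, I would observe that this is exactly the statement that any $\bv_h \in \bV_{0,h}$ with $b_1(\bzeta_h,\bv_h) = 0$ for all $\bzeta_h \in \bZ_h$ must vanish, which is a direct consequence of the discrete inf-sup condition \ref{A1}; note that, unlike the continuous case, here I do not rely on a vector-potential characterisation but read the claim off from \ref{A1} directly. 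Condition (2) of the theorem reduces to the coercivity of $a_1$ on $\bZ_{0,h}$: since assumption \ref{A4} forces $\bZ_{0,h} \subseteq \bZ_0$, one has $a_1(\bzeta_h,\bzeta_h) = \norm{\bzeta_h}_{0,\Omega}^2 = \norm{\bzeta_h}_{\bcurl_s,\Omega}^2$ on this kernel, so the estimate holds with constant independent of $h$. Finally, condition (3) is precisely the discrete inf-sup \ref{A1}. Theorem~\ref{th:discrete-abstract} then yields a unique $(\bomega_h,\bu_h) \in \bZ_h \times \bV_{0,h}$, together with the stability bound $\norm{\bomega_h}_{\bcurl_s,\Omega} + \norm{\bu_h}_{r,\vdiv,\Omega} \lesssim \norm{F(\widetilde{T}_h)}_{(\bV_h)'} \lesssim \rho\beta\norm{\bg}_{\bL^\infty(\Omega)} \norm{\widetilde{T}_h}_{1,\Omega}$, where the last step uses the boundedness \eqref{eq:F-bound} and the hidden constant depends only on $\hat{\beta}_1$ and $C_{\Omega,r}$.

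For the pressure I would proceed as in the continuous case: define the functional $\bv_h \mapsto F(\widetilde{T}_h;\bv_h) - b_1(\bomega_h,\bv_h) + a_2(\bu_h,\bv_h)$ on $\bV_h$, which vanishes on $\bV_{0,h}$ by the second equation of \eqref{eq:Brinkman-reduced-disc}. The discrete inf-sup condition \ref{A2} guarantees that $\bB_2^*$ has closed range equal to the annihilator of $\bV_{0,h}$, so there is a unique $p_h \in \rQ_h$ realising this functional through $b_2(\cdot,p_h)$; uniqueness again follows from \ref{A2}. Bounding $\norm{p_h}_{0,\Omega}$ by $\hat{\beta}_2^{-1}$ times the supremum of that functional and using the boundedness properties \eqref{eq:bound-b1}, \eqref{eq:bound-a2}, \eqref{eq:F-bound} together with the already-derived bound on $(\bomega_h,\bu_h)$ yields the stated estimate. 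I do not anticipate a genuinely hard step here, since the discrete inf-sup conditions \ref{A1}--\ref{A2} (whose proof is the substantive work, carried out elsewhere) are taken as hypotheses; the only point requiring care is the bookkeeping that keeps every constant independent of $h$, which is ensured because $\hat{\beta}_1,\hat{\beta}_2$ are $h$-independent by assumption and all boundedness constants coincide with those of the continuous forms.
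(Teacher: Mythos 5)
Your proposal is correct and follows essentially the same route as the paper's proof: reduce to the discrete kernel of $\bB_2$, verify the hypotheses of Theorem~\ref{th:discrete-abstract} (symmetry and positive semi-definiteness by conformity, coercivity of $a_1$ on $\bZ_{0,h}\subseteq\bZ_0$ via \ref{A4}, inf-sup via \ref{A1}), and then recover $p_h$ through \ref{A2} exactly as in the continuous case. The only minor (and arguably cleaner) deviation is that you deduce $\ker(\bB_1^*|_{\bV_{0,h}})=\{\cero\}$ directly from the discrete inf-sup condition \ref{A1}, whereas the paper argues via conformity of $\bZ_h$ together with the continuous characterisation \eqref{eq:char-2}.
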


\begin{proof}
First let us consider the reduced discrete problem \eqref{eq:Brinkman-reduced-disc} in the discrete kernel of $\bB_2$. 
Take any $\bv_h$ in the kernel of the transpose operator $\bB_1^*$ restricted to $\bV_{0,h}$, then it necessarily satisfies 
\[
\int_\Omega \bcurl \bzeta_h \cdot \bv_h = 0 \qquad 
\forall \bv_h \in \ker(\bB_1^*|_{V_{0,h}}) \quad
\forall \bzeta_h \in \bZ_h. 
\]
As $\bZ_h \subset \bH_\star(\bcurl_s,\Omega)$ is conforming, then from \eqref{eq:char-2}
we have that $\ker(\bB_1^*|_{V_{0,h}}) = \{0\}$ is trivial.
Furthermore regarding conformity, the symmetry and positive semi-definiteness of $a_1,a_2$ are inherited.
From $\bZ_{0,h} \subset \bZ_0$ we have by \eqref{eq:a1-coer} that 
\begin{equation*}
    \sup_{\bzeta_h \in \bZ_{0,h} \setminus \{\cero\}} 
    \frac{\norm{a_1(\bomega_h, \bzeta_h)}}{\norm{\bomega_h}_{\bcurl_s, \Omega}}
    \geq \frac{\norm{a_1(\bomega_h, \bomega_h)}}{\norm{\bomega_h}_{\bcurl_s, \Omega}}
    = \norm{\bomega_h}_{\bcurl_s, \Omega}
    \qquad \forall \bomega_h \in \bZ_{0,h}. 
\end{equation*}
Thus in combination with the assumption \ref{A1},
we are able to apply \eqref{th:discrete-abstract} to yield a unique solution 
$(\bomega_h, \bu_h) \in \bZ_h \times \bV_{0,h}$ to \eqref{eq:Brinkman-reduced-disc} with continuity 
\begin{equation*}
    \norm{\bomega_h}_{\bcurl_s, \Omega} + \norm{\bu_h}_{r,\vdiv,\Omega} 
    \leq C_{s_1,d} \rho \beta \norm{\bg}_{\bL^\infty(\Omega)} \|\widetilde{T}_h\|_{1,\Omega}. 
\end{equation*}
Here $C_{s_1,d}$ is an $h$-independent constant, depending only on the volume constant $C_{\Omega,r}$ and $\hat{\beta}_1$.
Following a similar argument of \eqref{eq:G-problem-equiv}, we consider the restriction $\mathcal{G}_h$ of $\mathcal{G}$ to the finite-dimensional subspace $\bV_h$: 
\begin{equation*}
\mathcal{G}_h\bv_h := F(\widetilde{T}_h;\bv_h) - b_1(\bomega_h,\bv_h) + a_2(\bu_h,\bv_h) \qquad \forall \bv_h \in \bV_h.
\end{equation*}
Then, from conformity and in particular $\bV_{0,h} \subset \bV_{0}$, we have $\mathcal{G}_h |_{\bV_{0,h}} \equiv 0$ and hence there exists some $p_h \in \rQ_h$ such that $\bB_2^* p_h = \mathcal{G}_h$. Utilising assumption \ref{A1} affords uniqueness of $p_h$ by similar means to \eqref{eq:G-problem-equiv}, and by employing  assumption \ref{A2} we find the bound 
\begin{align*}
\norm{p_h}_{0,\Omega} \lesssim 
    \frac{1}{\hat{\beta}_2} \bigl(1 + \sqrt{\mu'} {+ \frac{\mu}{\kappa}}\bigr) \rho \beta \|\bg\|_{\bL^\infty(\Omega)} \|\widetilde{T}_h\|_{1,\Omega}.
\end{align*}
\end{proof}

Establishing the unique solvability of the discrete decoupled thermal problem is much the same due to the conformity, where the primary property inherited is the coercivity of $a_3(\cdot,\cdot)$.

\begin{lemma}\label{lem:energy-discrete}
 For a fixed $\tilde{\bu}_h\in\bV_{0,h}$,
 there exists a unique $T_h\in \rY_h$ such that 
\begin{equation}\label{eq:decoupled-energy-disc}
    a_3(T_h,S_h)+c_1(\tilde{\bu}_h;T_h,S_h)-c_2(\tilde{\bu}_h,\tilde{\bu}_h;S_h) 
    =\; G (S_h)\qquad \forall S_h\in \rY_h.
\end{equation}
Furthermore we have continuous dependence on data by,
\begin{equation}\label{eq:energy-cont-dep-discrete}
{\|T_h\|_{1,\Omega} \leq C_{\text{Sob}} \max\{\sigma_0^{-1},\alpha^{-1}\}}
\biggl[\|g\|_{0,\Omega} + \frac{\mu}{c'\rho\kappa} \|\tilde{\bu}_h\|^2_{r,\vdiv,\Omega}
\biggr].
\end{equation}
\end{lemma}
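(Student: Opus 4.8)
The plan is to mirror the continuous argument of Lemma~\ref{lem:energy}, exploiting conformity to transfer the relevant structural properties to the finite-dimensional setting and then invoking the Lax--Milgram lemma on $\rY_h$. First I would fix $\tilde{\bu}_h \in \bV_{0,h}$ once and for all, so that the trilinear form $c_1(\tilde{\bu}_h;\cdot,\cdot)$ may be read as a bilinear form on $\rY_h \times \rY_h$, while $c_2(\tilde{\bu}_h,\tilde{\bu}_h;\cdot)$ becomes a bounded linear functional on $\rY_h$. The boundedness of the bilinear part $a_3(\cdot,\cdot) + c_1(\tilde{\bu}_h;\cdot,\cdot)$ follows directly from \eqref{eq:bound-a3} and \eqref{bound-c1}, and the boundedness of the right-hand side $G(\cdot) + c_2(\tilde{\bu}_h,\tilde{\bu}_h;\cdot)$ follows from \eqref{eq:G-bound} and \eqref{bound-c2}.

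The key observation is that, by conformity together with assumption~\ref{A3}, the discrete kernel satisfies $\bV_{0,h} = \{\bv_h \in \bV_h : \vdiv\bv_h = 0\} \subset \bV_0$, and likewise $\rY_h \subset \rH^1_\star(\Omega)$. Consequently the skew-symmetry identity \eqref{eq:c1-zero} is inherited verbatim, giving $c_1(\tilde{\bu}_h; S_h, S_h) = 0$ for all $S_h \in \rY_h$. Combined with the coercivity \eqref{eq:a3-coer} of $a_3(\cdot,\cdot)$ (which holds on all of $\rH^1_\star(\Omega)$ and hence on $\rY_h$), this yields coercivity of the full bilinear form with constant $\min\{\sigma_0,\alpha\}$. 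The Lax--Milgram lemma then furnishes a unique $T_h \in \rY_h$ solving \eqref{eq:decoupled-energy-disc}.

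For the continuous dependence estimate \eqref{eq:energy-cont-dep-discrete}, I would test the equation with $S_h = T_h$. The term $c_1(\tilde{\bu}_h; T_h, T_h)$ vanishes by the above, so coercivity bounds the left-hand side below by $\min\{\sigma_0,\alpha\}\|T_h\|^2_{1,\Omega}$, while \eqref{eq:G-bound} and \eqref{bound-c2} bound the right-hand side by a constant (involving the Sobolev embedding constant $C_{\mathrm{Sob}}$ appearing in \eqref{bound-c2}) times $\bigl(\|g\|_{0,\Omega} + \tfrac{\mu}{c'\rho\kappa}\|\tilde{\bu}_h\|^2_{r,\vdiv,\Omega}\bigr)\|T_h\|_{1,\Omega}$. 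Dividing through by $\|T_h\|_{1,\Omega}$ and by the coercivity constant $\min\{\sigma_0,\alpha\}$ --- equivalently, multiplying by $\max\{\sigma_0^{-1},\alpha^{-1}\}$ --- produces exactly \eqref{eq:energy-cont-dep-discrete}.

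Since every ingredient is simply the conforming restriction of a continuous property already established, there is no genuine obstacle here; the only point requiring care is the verification that $\bV_{0,h} \subset \bV_0$ so that \eqref{eq:c1-zero} applies, which is precisely where assumption~\ref{A3} is used. This is the discrete counterpart of Lemma~\ref{lem:energy}, with the constant $C_{s_2}$ there replaced by the Sobolev constant $C_{\mathrm{Sob}}$ underlying the embedding in \eqref{bound-c2}.
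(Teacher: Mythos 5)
Your proposal is correct and follows essentially the same route as the paper: conformity gives $\bV_{0,h}\subset\bV_0$ and $\rY_h\subset\rH^1_\star(\Omega)$, so the vanishing of $c_1(\tilde{\bu}_h;S_h,S_h)$ and the coercivity of $a_3(\cdot,\cdot)$ are inherited, Lax--Milgram yields existence and uniqueness, and the stability bound follows from the coercivity constant together with the boundedness of $G$ and $c_2$. Your derivation of the estimate by testing with $S_h=T_h$ is just the standard proof of the Lax--Milgram stability bound that the paper invokes directly, so there is no substantive difference.
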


\begin{proof}
    Mimicking the proof from the continuous case, from $\bV_{0,h} \subset \bV_{0}$ we have that $c_1(\tilde{\bu}_h, \cdot, \cdot)|_{Y_h} \equiv 0$. Combining this with the inherited coercivity of $a_3(\cdot,\cdot)$ restricted to $\rY_h \times \rY_h$ and applying Lax--Milgram, we have a unique $T_h \in \rY_h$ solving \eqref{eq:decoupled-energy-disc}. Furthermore, we have the continuous dependence on data by 
    \begin{equation*}
        \|T_h\|_{1,\Omega} 
        \leq \max\{\sigma_0^{-1},\alpha^{-1}\} \norm{G + c_2(\tilde{\bu}_h, \tilde{\bu}_h; \cdot)}_{(Y_h)'}
        \leq C_{\text{Sob}} \max\{\sigma_0^{-1},\alpha^{-1}\}
\biggl[\|g\|_{0,\Omega} + \frac{\mu}{c'\rho\kappa} \|\tilde{\bu}_h\|^2_{r,\vdiv,\Omega}
\biggr],
    \end{equation*}
    where $C_{\text{Sob}}$ appears from the continuity of $c_2(\cdot,\cdot;\cdot)$ as seen in \eqref{eq:c1-c2-bound}.
\end{proof}

To conclude upon the discrete solvability we define the discrete solution operators,
\begin{alignat*}{2}
\mathcal{S}_{1,h}: &\ \rY_h &&\to \bZ_h \times \bV_{0,h},\\
                   &\ \tilde{T}_h &&\mapsto 
                    ({S}_{11,h} (\tilde{T}_h),{S}_{12,h} (\tilde{T}_h)) := (\bomega_h, \bu_h),
\end{alignat*}
where $(\bomega_h, \bu_h)$ is the unique solution to the discrete reduced problem \eqref{eq:Brinkman-reduced-disc} due to Lemma \ref{lem:Brinkman-discrete-solve}. Further define,
\begin{equation*}
    \mathcal{S}_{2,h} : \bV_{0,h} \to \rY_h, \quad \tilde{\bu}_h \mapsto \mathcal{S}_{2,h} (\tilde{\bu}_h) := T_h,
\end{equation*}
where $T_h$ is the unique solution to the discrete decoupled thermal problem \eqref{eq:energy-cont-dep-discrete}. To re-couple the discrete Brinkman and thermal problems, we consider the map $\mathcal{F}_h: \rY_h \to \rY_h$ given by $\mathcal{F}_h = \mathcal{S}_{2,h} \circ \mathcal{S}_{12,h}$. Then, one may observe that the solvability of the discrete nonlinear problem \eqref{eq:galerkin} is equivalent to the following: 
\begin{equation}\label{eq:discrete-fixed-point}
\text{Find $T_h \in \rY_h$ such that $\mathcal{F}_h(T_h) = T_h$.}
\end{equation}
Let us further define the discrete $h$-independent constants 
\begin{equation}
C_{1,d} := C_{s_2,d} \max\{\sigma^{-1}_0, \alpha^{-1}\} \norm{g}_{0,\Omega}
\,\,\, \text{and} \,\,\,
C_{2,d} := C^2_{s_1} C_{s_2} \max\{\sigma^{-1}_0, \alpha^{-1}\} 
    (\rho \beta \norm{\bg}_{\bL^\infty(\Omega)})^2 \frac{\mu}{c' \rho \kappa},
\end{equation}
and take the discrete analogue to \eqref{lem:F-map-b} by defining constants,
\begin{equation*}
x_{1,d} := \frac{1 - \sqrt{1 - 4C_{1,d}C_{2,d}}}{2C_{2,d}}
\qquad \text{and} \qquad
x_{2,d} := \frac{1 + \sqrt{1 - 4C_{1,d}C_{2,d}}}{2C_{2,d}},
\end{equation*}
which will be real under the appropriate assumptions.

\begin{theorem}\label{discrete-small-r}
Assume that $C_{1,d} C_{2,d} < 1/4$. Then, for a radius $R_d$ subject to both $x_{1,d} \leq R_d \leq x_{2,d}$ and,
\begin{equation}\label{eq:discrete-radius-assumption}
    R_d C_{s_1,d} C_{s_2,d} \rho \beta \norm{\bg}_{\bL^{\infty}(\Omega)} 
    \max\{\sigma_0^{-1}, \alpha^{-1}\} (1 + 2C_{s_1,d} \rho \beta \norm{\bg}_{\bL^{\infty}(\Omega)})
    < 1,
\end{equation}
there exists a unique solution to the discrete fixed-point problem \eqref{eq:discrete-fixed-point}, 
and hence equivalently showing the discrete well-posedness of the nonlinear problem \eqref{eq:galerkin}.
\end{theorem}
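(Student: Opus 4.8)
The plan is to reproduce, at the discrete level, the two-step contraction argument already used for the continuous problem: first show that $\mathcal{F}_h=\mathcal{S}_{2,h}\circ\mathcal{S}_{12,h}$ maps a closed ball into itself, then show that $\mathcal{F}_h$ is a contraction on that ball, and finally invoke the Banach fixed-point theorem. All the building blocks are already available: Lemma~\ref{lem:Brinkman-discrete-solve} provides the discrete Brinkman operator $\mathcal{S}_{1,h}=(\mathcal{S}_{11,h},\mathcal{S}_{12,h})$ together with an $h$-independent stability bound, and Lemma~\ref{lem:energy-discrete} does the same for $\mathcal{S}_{2,h}$. The decisive structural fact is that assumption \ref{A3} forces $\bV_{0,h}\subset\bV_0$, so the skew-symmetry identity \eqref{eq:c1-zero} for $c_1(\cdot;\cdot,\cdot)$ survives for discrete kernel velocities; this is precisely what makes the coercivity test in the thermal step legitimate on $\rY_h$.

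First I would prove the discrete self-mapping property, mirroring Lemma~\ref{lem:F-map-b}, on the closed ball $\rY_h^{R_d}:=\{S_h\in\rY_h:\|S_h\|_{1,\Omega}\le R_d\}$. For $\widetilde{T}_h\in\rY_h^{R_d}$, the continuous-dependence bound of Lemma~\ref{lem:Brinkman-discrete-solve} gives $\|\mathcal{S}_{12,h}(\widetilde{T}_h)\|_{r,\vdiv,\Omega}\le C_{s_1,d}\,\rho\beta\|\bg\|_{\bL^\infty(\Omega)}\,R_d$, and feeding this into the bound of Lemma~\ref{lem:energy-discrete} yields
\[
\|\mathcal{F}_h(\widetilde{T}_h)\|_{1,\Omega}\le C_{1,d}+C_{2,d}\,R_d^2 .
\]
Because $R_d$ lies between the two roots $x_{1,d},x_{2,d}$ of $C_{2,d}x^2-x+C_{1,d}=0$ (which are real and positive precisely when $C_{1,d}C_{2,d}<1/4$), the right-hand side is bounded by $R_d$, so $\mathcal{F}_h(\rY_h^{R_d})\subseteq\rY_h^{R_d}$.

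Next I would establish the contraction estimate, following the derivation that produced \eqref{eq:S1-lip}, \eqref{eq:S2-lip} and \eqref{eq:F-lip}. For $T_{1,h},T_{2,h}\in\rY_h^{R_d}$, subtracting the two discrete reduced Brinkman systems shows that $(\mathcal{S}_{11,h}(T_{1,h})-\mathcal{S}_{11,h}(T_{2,h}),\,\mathcal{S}_{12,h}(T_{1,h})-\mathcal{S}_{12,h}(T_{2,h}))$ solves a reduced problem with datum $\widetilde{F}(T_{1,h}-T_{2,h};\cdot)$, so the stability of Lemma~\ref{lem:Brinkman-discrete-solve} gives the discrete analogue of \eqref{eq:S1-lip}. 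Subtracting the two discrete thermal problems, testing with $S_h=T_{1,h}-T_{2,h}\in\rY_h$, and using discrete coercivity of $a_3(\cdot,\cdot)$ together with \eqref{eq:c1-zero} and the boundedness \eqref{bound-c1}--\eqref{bound-c2} produces the discrete analogue of \eqref{eq:S2-lip}. Chaining these through $\mathcal{F}_h=\mathcal{S}_{2,h}\circ\mathcal{S}_{12,h}$ and using $T_{1,h},T_{2,h},\mathcal{F}_h(T_{2,h})\in\rY_h^{R_d}$ gives
\[
\|\mathcal{F}_h(T_{1,h})-\mathcal{F}_h(T_{2,h})\|_{1,\Omega}\le L_d\,\|T_{1,h}-T_{2,h}\|_{1,\Omega},
\]
with $L_d:=R_d\,C_{s_1,d}C_{s_2,d}\,\rho\beta\|\bg\|_{\bL^\infty(\Omega)}\max\{\sigma_0^{-1},\alpha^{-1}\}\,(1+2C_{s_1,d}\,\rho\beta\|\bg\|_{\bL^\infty(\Omega)})$. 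Assumption \eqref{eq:discrete-radius-assumption} is exactly $L_d<1$, so $\mathcal{F}_h$ is a contraction on $\rY_h^{R_d}$, which is a closed ball in the finite-dimensional space $\rY_h$ and hence complete. The Banach fixed-point theorem then delivers a unique $T_h\in\rY_h^{R_d}$ with $\mathcal{F}_h(T_h)=T_h$, and unwinding the definitions of $\mathcal{S}_{1,h}$ and $\mathcal{S}_{2,h}$ recovers the unique solution $(\bomega_h,\bu_h,p_h,T_h)$ of \eqref{eq:galerkin}.

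The only genuinely delicate point, and where I would concentrate the write-up, is the $h$-independence of the contraction constant $L_d$: each constant entering the chain ($C_{s_1,d}$, $C_{s_2,d}$, and the Sobolev constant behind \eqref{bound-c1}--\eqref{bound-c2}) must be uniform in $h$. This is guaranteed by the $h$-independent discrete inf-sup constants $\hat{\beta}_1,\hat{\beta}_2$ of assumptions \ref{A1}--\ref{A2} and by the conformity $\rY_h\subset\rH^1_\star(\Omega)$, which allows the continuous embedding \eqref{eq:Sobolev-inequality} to be applied verbatim to discrete functions, so that \emph{no} inverse inequality is required. Everything else is a routine transcription of the continuous argument.
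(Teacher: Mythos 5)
Your proposal is correct and follows essentially the same route as the paper: the authors likewise transplant the continuous fixed-point argument verbatim, using Lemma~\ref{lem:Brinkman-discrete-solve} and Lemma~\ref{lem:energy-discrete} to get the self-mapping property of $\mathcal{F}_h$ on $\rY_h^{R_d}$ and the Lipschitz bound with constant exactly the left-hand side of \eqref{eq:discrete-radius-assumption}, then invoke Banach's fixed-point theorem. Your additional remarks on the $h$-independence of $C_{s_1,d}$, $C_{s_2,d}$ via \ref{A1}--\ref{A2} and conformity are consistent with (indeed slightly more explicit than) the paper's treatment.
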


\begin{proof}
    Under the construction of discrete constants $C_{1,d}, C_{2,d}$ and suppositions regarding these constants, we may directly apply the same arguments as was used in the continuous setting. 
    Namely, by taking the following closed ball in the discrete space $\rY_h$:
    \begin{equation*}
    \rY^{R_d}_h := \{ S \in \rY_h : \norm{S}_{1,\Omega} \leq R_d \},
    \end{equation*}
    we have from $x_{1,d} \leq R_d \leq x_{2,d}$ and \eqref{lem:F-map-b} that 
    $\mathcal{F}_h(\rY^{R_d}_h) \subseteq \rY^{R_d}_h$. Similarly we have that $\mathcal{F}_h$
    is Lipschitz continuous by noting first from \eqref{lem:Brinkman-discrete-solve} that for any $T_{1,h}, T_{2,h} \in \rY_h$,
    \begin{align*}
    \norm{\mathcal{S}_{1,h}(T_{1,h}) - \mathcal{S}_{1,h}(T_{2,h})}
    &\leq \norm{\bomega_{1,h} - \bomega_{2,h}}_{\bcurl_s,\Omega} +
        \norm{\bu_{1,h} - \bu_{2,h}}_{r,\vdiv, \Omega}\\
    &\leq C_{s_1,d} \rho \beta \norm{\bg}_{\bL^\infty(\Omega)} \norm{T_{1,h} - T_{2,h}}_{1,\Omega}.
    \end{align*}
    Secondly, from \eqref{lem:energy-discrete} and conformity of the spaces, we have a similar estimate for $\bu_{1,h}, \bu_{2,h} \in \bV_{0,h}$,
    \begin{align*}
    &\norm{\mathcal{S}_{2,h}(\bu_{1,h}) - \mathcal{S}_{2,h}(\bu_{2,h})}_{1,\Omega}\\
    &\quad \leq C_{s_2,d} \max\{\sigma_0^{-1}, \alpha^{-1}\} 
    (\norm{\mathcal{S}_{2,h}(\bu_{2,h})}_{1,\Omega} + 
        \norm{\bu_{1,h}}_{r,\vdiv,\Omega} + \norm{\bu_{2,h}}_{r,\vdiv,\Omega})
    \norm{\bu_{1,h} - \bu_{2,h}}_{r,\vdiv,\Omega}.
    \end{align*}
    To conclude we consider then $T_{1,h}, T_{2,h} \in \rY^{R_d}_h$ which yields 
    \begin{align*}
  &  \norm{\mathcal{F}_h(T_{1,h}) - \mathcal{F}_h(T_{2,h})}_{1,\Omega} \\
&\qquad     \leq R_d C_{s_2,d} \max\{\sigma_0^{-1}, \alpha^{-1}\} 
        (1 + 2 C_{s_1,d} \rho \beta \norm{\bg}_{\bL^{\infty}(\Omega)})
        C_{s_1,d} \rho \beta \norm{\bg}_{\bL^{\infty}(\Omega)} \norm{T_{1,h} - T_{2,h}}_{1,\Omega},
    \end{align*}
    and we apply both the secondary radial assumption \eqref{eq:discrete-radius-assumption}, 
    as well as the Banach fixed-point theorem.
\end{proof}

\section{Quasi-optimality and convergence rates}\label{sec:error} This section is dedicated to the error analysis of \eqref{eq:galerkin}. First we use the generic assumptions \ref{A1}--\ref{A4} and unique solvability of continuous and discrete problems to obtain a C\'ea estimate. Then we choose specific finite element subspaces and show that they satisfy the generic assumptions, and provide concrete error bounds. 
\subsection{C\'ea estimate}
\begin{lemma}\label{cea-estimate}
    Suppose the conditions of \eqref{eq:small-r} and \eqref{discrete-small-r} are met such that $T,T_h$ are the unique fixed-point solutions   to the continuous and discrete problem, respectively.    Further consider the following assumption on constants
    \begin{equation}\label{eq:assumption-cea}
    (1 + \max(C_{s_1}, \beta_2^{-1}) C_{\Omega,r} \rho \beta \norm{\bg}_{\bL^\infty(\Omega)})
    \, \min(\sigma_0^{-1}, \alpha^{-1}) \, \mathcal{C}_1 < \frac12,
    \end{equation}
    where we set $\mathcal{C}_1$ to be,
    \begin{equation}\label{def:calC1}
    \mathcal{C}_1 := (1 + \frac{\mu}{\kappa c'} \beta \norm{\bg}_{\bL^\infty(\Omega)}) C_{s_1} R 
    + \frac{\mu}{\kappa c'} \beta \norm{\bg}_{\bL^\infty(\Omega)} C_{s_1,d} R_d.
    \end{equation}
    Then the corresponding solutions $(\bomega,\bu,p,T)$ and $(\bomega_h,\bu_h,p_h,T_h)$ are quasi-optimal, such that there exists an
    $h$-independent constant for which,
    \begin{align*}
        &\norm{\bomega - \bomega_h}_{\bcurl_s, \Omega} + \norm{\bu - \bu_h}_{r,\vdiv,\Omega} + \norm{p - p_h}_{0,\Omega} + \norm{T - T_h}_{1,\Omega} \\
        & \qquad \lesssim \dist(\bomega, \bZ_h) + \dist(\bu, \bV_h) + \dist(p, \rQ_h) + \dist(T, \rY_h). 
    \end{align*}
\end{lemma}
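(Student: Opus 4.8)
The plan is to prove quasi-optimality field-by-field: I would derive one Céa-type estimate for the Brinkman block $(\bomega,\bu,p)$ that carries a coupling term in $\|T-T_h\|_{1,\Omega}$, and one for the temperature $T$ that carries a coupling term in $\|\bu-\bu_h\|_{r,\vdiv,\Omega}$, and then close the resulting circular dependence by absorbing the coupling terms via the smallness hypothesis \eqref{eq:assumption-cea}.

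First I would treat the Brinkman block. Since the discrete spaces are conforming and the forms $a_1,a_2,b_1,b_2$ are evaluated exactly, the only inconsistency between the continuous problem \eqref{eq:decoupled-Brinkman} (driven by $F(T)$) and the discrete one of Lemma~\ref{lem:Brinkman-discrete-solve} (driven by $F(T_h)$) is the data. I would introduce the auxiliary discrete solution $(\hat\bomega_h,\hat\bu_h,\hat p_h)\in\bZ_h\times\bV_h\times\rQ_h$ of the \emph{same} discrete perturbed saddle-point system but driven by $F(T)$; its existence and stability follow from Theorem~\ref{th:discrete-abstract} under \ref{A1}--\ref{A2}. Because $(\bomega,\bu,p)$ and $(\hat\bomega_h,\hat\bu_h,\hat p_h)$ solve the same data problem, Galerkin orthogonality holds and the standard Céa argument for conforming discretisations of well-posed perturbed saddle-point problems gives
\[
\|\bomega-\hat\bomega_h\|_{\bcurl_s,\Omega}+\|\bu-\hat\bu_h\|_{r,\vdiv,\Omega}+\|p-\hat p_h\|_{0,\Omega}\lesssim \dist(\bomega,\bZ_h)+\dist(\bu,\bV_h)+\dist(p,\rQ_h).
\]
The difference $(\hat\bomega_h-\bomega_h,\hat\bu_h-\bu_h,\hat p_h-p_h)$ solves the discrete problem with right-hand side $F(T)-F(T_h)=\widetilde{F}(T-T_h)$, so the discrete stability bound together with the boundedness \eqref{eq:F-bound} of $\widetilde{F}$ controls it by $\max(C_{s_1},\beta_2^{-1})\,C_{\Omega,r}\,\rho\beta\|\bg\|_{\bL^\infty(\Omega)}\|T-T_h\|_{1,\Omega}$. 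A triangle inequality then yields a Brinkman estimate of the form (best-approximation errors) $+\,c_B\|T-T_h\|_{1,\Omega}$.

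Next I would handle the temperature. Writing $T-T_h=(T-T_I)+e_h$ with $e_h:=T_I-T_h\in\rY_h$ for arbitrary $T_I\in\rY_h$, and testing the subtracted continuous/discrete energy equations with $S_h=e_h$, the coercivity \eqref{eq:a3-coer} combined with the cancellation $c_1(\bu_h;e_h,e_h)=0$ (valid since $\bu_h\in\bV_{0,h}\subset\bV_0$, cf.\ \eqref{eq:c1-zero}) bounds $\min\{\sigma_0,\alpha\}\|e_h\|_{1,\Omega}^2$ from below. I would then split the trilinear discrepancies as
\begin{align*}
c_1(\bu;T,e_h)-c_1(\bu_h;T_h,e_h)&=c_1(\bu-\bu_h;T,e_h)+c_1(\bu_h;T-T_I,e_h),\\
c_2(\bu,\bu;e_h)-c_2(\bu_h,\bu_h;e_h)&=c_2(\bu-\bu_h,\bu;e_h)+c_2(\bu_h,\bu-\bu_h;e_h),
\end{align*}
where the term $c_1(\bu_h;e_h,e_h)$ again vanishes by \eqref{eq:c1-zero}. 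Applying the boundedness \eqref{bound-c1}--\eqref{bound-c2} and the a priori bounds $\|T\|_{1,\Omega}\le R$, $\|\bu\|_{r,\vdiv,\Omega}\lesssim C_{s_1}R$ and $\|\bu_h\|_{r,\vdiv,\Omega}\lesssim C_{s_1,d}R_d$ from Lemmas~\ref{lem:Brinkman} and \ref{lem:Brinkman-discrete-solve} collapses all coefficients into the constant $\mathcal C_1$ of \eqref{def:calC1}. Dividing by $\|e_h\|_{1,\Omega}$ and taking the infimum over $T_I$ gives a temperature estimate $\|T-T_h\|_{1,\Omega}\lesssim \dist(T,\rY_h)+\min\{\sigma_0,\alpha\}^{-1}\mathcal C_1\,\|\bu-\bu_h\|_{r,\vdiv,\Omega}$.

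Finally I would close the loop: substituting the Brinkman estimate into the temperature estimate produces an inequality of the form $\|T-T_h\|_{1,\Omega}\le(\text{best-approximation terms})+\Theta\,\|T-T_h\|_{1,\Omega}$, where the effective coupling constant $\Theta$ is precisely the quantity on the left-hand side of \eqref{eq:assumption-cea}. The hypothesis $\Theta<\tfrac12$ lets me absorb the last term on the left, bounding $\|T-T_h\|_{1,\Omega}$ by the four best-approximation errors; reinserting this into the Brinkman estimate bounds $\|\bomega-\bomega_h\|_{\bcurl_s,\Omega}+\|\bu-\bu_h\|_{r,\vdiv,\Omega}+\|p-p_h\|_{0,\Omega}$ by the same quantities, giving the claim. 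I expect the main obstacle to be exactly this final absorption: the bidirectional coupling through $F$ (temperature $\to$ velocity) and through $c_1,c_2$ (velocity $\to$ temperature) makes the two estimates mutually dependent, so one must keep the nonlinear coefficients controlled by the solution radii $R,R_d$ so that $\Theta$ stays below the threshold, and the anti-symmetry cancellation \eqref{eq:c1-zero} is indispensable to prevent an uncontrollable $\|e_h\|_{1,\Omega}^2$ contribution from appearing on the wrong side of the coercivity bound.
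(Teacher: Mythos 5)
Your proposal is correct and follows essentially the same route as the paper's proof: a Brinkman-block estimate with a $\|T-T_h\|_{1,\Omega}$ coupling term obtained from the discrete inf-sup/bounded-below property and the Lipschitz continuity of $\widetilde F$, a temperature estimate with a $\|\bu-\bu_h\|_{r,\vdiv,\Omega}$ coupling term obtained from the coercivity of $a_3$, the antisymmetry cancellation \eqref{eq:c1-zero}, and the a priori radii $R,R_d$, and finally absorption under \eqref{eq:assumption-cea}. The only differences are cosmetic bookkeeping (you introduce an auxiliary discrete solution driven by $F(T)$ rather than decomposing the error directly against an arbitrary element of $\mathfrak{U}_h$, and you cancel $c_1(\bu_h;e_h,e_h)$ rather than $c_1(\bu;\chi_T^h,\chi_T^h)$, which permutes $R$ and $R_d$ inside $\mathcal{C}_1$ but changes nothing structurally).
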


We briefly remark that given $\mathcal{C}_1$'s dependence on $R,R_d$, the assumption \eqref{eq:assumption-cea} will be dictated by the ratio of $\sigma_0$ and $\alpha$. This follows
from the lower bounds on $R,R_d$ by $x_1, x_{1,d}$ respectively.

\begin{proof}
Let us 
recall that the Brinkman operator $\mathcal{A}:\mathfrak{U} \to \mathfrak{U}'$ 
(with $\mathfrak{U}$ defined as in \eqref{def:frak}) is a continuous isomorphism by Lemma~\ref{lem:Brinkman}.  In particular for the fixed-point solution $(T,(\bomega,\bu,p)) \in \rH^1_\star(\Omega) \times \mathfrak{U}$ we have,
\begin{equation*}
\mathcal{A}((\bomega,\bu,p), (\bzeta, \bv, q)) = (\cero,F(T;v),0) \qquad \forall (\bzeta, \bv, q) \in \mathfrak{U},
\end{equation*}
and by $\bu \in \ker(\bB_2)$, the continuous dependence on data \eqref{eq:wu} of the reduced problem, and the bounded-below property of $\bB_2^*$, it holds that $\mathcal{A}$ is bounded below by,
\begin{equation*}
\norm{(\tilde{\bomega}, \tilde{\bu}, \tilde{p})}_{\mathfrak{U}} \leq \max(C_{s_1}, \beta_2^{-1}) \norm{\mathcal{A}((\tilde{\bomega}, \tilde{\bu}, \tilde{p}))}_{\mathfrak{U}'} 
\qquad \forall (\tilde{\bomega}, \tilde{\bu}, \tilde{p}) \in \mathfrak{U}.
\end{equation*}
Similarly as \eqref{def:frak}, we denote the discrete counterpart by $\mathfrak{U}_h := \bZ_h \times \bV_h \times \rQ_h$ and the discrete restriction $\mathcal{A}_h: \mathfrak{U}_h \to \mathfrak{U}_h'$,
which is another continuous isomorphism by Lemma~\ref{lem:Brinkman-discrete-solve} with fixed point $(T_h, (\bomega_h,\bu_h,p_h)) \in \rY_h \times \mathfrak{U}_h$ satisfying:
\begin{equation*}
\mathcal{A}_h((\bomega_h,\bu_h,p_h), (\bzeta_h, \bv_h, q_h)) = (\cero,F(T_h;\bv_h),0) \qquad \forall (\bzeta_h, \bv_h, q_h) \in \mathfrak{U}_h.
\end{equation*}
Furthermore $\mathcal{A}_h$ has an $h$-independent bounded-below property afforded by assumptions \ref{A1}--\ref{A4}, given as follows
\begin{equation*}
\norm{(\tilde{\bomega}_h, \tilde{\bu}_h, \tilde{p}_h)}_{\mathfrak{U}} 
\leq \max(C_{s_1,d}, \hat{\beta}_2^{-1}) \norm{\mathcal{A}_h(\tilde{\bomega}_h, \tilde{\bu}_h, \tilde{p}_h)}_{\mathfrak{U}_h'} 
\qquad \forall (\tilde{\bomega}_h, \tilde{\bu}_h, \tilde{p}_h) \in \mathfrak{U}_h.
\end{equation*}
As stated, $\mathcal{A}_h = \mathcal{A}|_{\mathfrak{U_h}}$ and $F_h = F|_{V_h}$ are simply restrictions so in constructing the errors,
\begin{equation*}
\be_{\bomega} := \bomega - \bomega_h \qquad
\be_{\bu}     := \bu     - \bu_h     \qquad
e_{p}       := p       - p_h       \qquad
e_{T}       := T       - T_h
\end{equation*}
one finds the following, recalling $\widetilde{F}$ denotes the linear part of the affine map $F: \rH^1_\star(\Omega) \to \bH^r_\star(\vdiv,\Omega)'$:
\begin{equation*}
\mathcal{A}((\be_{\bomega}, \be_{\bu}, e_{p}), \mathfrak{u}_h) = (\cero, \widetilde{F}(e_{T}; \mathfrak{u}_h), 0)
\qquad \forall \mathfrak{u}_h \in \mathfrak{U}_h.
\end{equation*}
If we let $(\tilde{\bomega}_h, \tilde{\bu}_h, \tilde{p}_h) \in \mathfrak{U}_h$ be arbitrary then
we may decompose each error in the usual way; for example in the case of $\be_{\bomega}$, as follows 
\begin{equation*}
\bchi_{\bomega}   := \bomega - \tilde{\bomega}_h \qquad
\bchi^h_{\bomega} := \tilde{\bomega}_h - \bomega_h \qquad \text{such that} \qquad
\be_{\bomega} = \bchi_{\bomega} + \bchi^h_{\bomega},
\end{equation*}
and analogously for each remaining individual error. From the bounded-below property of $\mathcal{A}_h$ and the continuity of $\widetilde{F},\mathcal{A}$, it follows that 
\begin{align*}
\norm{(\bchi^h_{\bomega},\bchi^h_{\bu},\chi^h_{p})}_{\mathfrak{U}}
&\leq \max(C_{s_1}, \beta_2^{-1}) \norm{\mathcal{A}((\be_{\bomega} - \bchi_{\bomega}, \be_{\bu} - \bchi_{\bu}, e_{p} - \chi_{p}))}_{\mathfrak{U}_h'}\\
&\leq \max(C_{s_1}, \beta_2^{-1}) (\Vert(0,\tilde{F}(e_T), 0)\Vert_{\mathfrak{U}_h'} + \norm{\mathcal{A}(\bchi_{\bomega}, \bchi_{\bu}, \chi_{p}))}_{\mathfrak{U}_h'} )\\
&\leq \max(C_{s_1}, \beta_2^{-1}) (C_{\Omega,r} \rho \beta \norm{\bg}_{\bL^\infty(\Omega)} \norm{e_T}_{1,\Omega} 
    + \norm{\mathcal{A}} \norm{(\bchi_{\bomega}, \bchi_{\bu}, \chi_{p}))}_{\mathfrak{U}} ),
\end{align*}
to which we may conclude the following partial result:
\begin{align*}
&\norm{(\be_{\bomega},\be_{\bu},e_{p})}_{\mathfrak{U}} \\
&\quad \leq  (1 + \max(C_{s_1}, \beta_2^{-1})\norm{\mathcal{A}}) \norm{(\bchi_{\bomega}, \bchi_{\bu}, \chi_{p}))}_{\mathfrak{U}} 
    + \max(C_{s_1}, \beta_2^{-1}) C_{\Omega,r} \rho \beta \norm{\bg}_{\bL^\infty(\Omega)} \norm{e_T}_{1,\Omega}.
\end{align*}
On the other hand, regarding the thermal problem one may view \eqref{eq:decoupled-energy} and \eqref{eq:decoupled-energy-disc} to deduce that 
\begin{equation*}
a_3(e_T, S_h) = c_1(\bu_h;T_h,S_h) - c_1(\bu;T, S_h) + c_2(\bu,\bu;S_h) - c_2(\bu_h,\bu_h;S_h) 
\qquad \forall S_h \in \rY_h.
\end{equation*}
To simplify the presentation of the
linear parts of the thermal-transport operator and the problem's corresponding functional,
we'll show two equalities regarding $c_1$ and $c_2$ respectively.
First, notice that by adding and subtracting $c_1(\bu; \tilde{T}_h,\cdot) - c_1(\bu; T_h, \cdot)$ we have
\begin{equation*}
c_1(\bu_h; T_h, \cdot) - c_1(\bu; T, \cdot) 
= -c_1(\bu - \bu_h; T_h, \cdot) - c_1(\bu; T - \tilde{T}_h, \cdot) - c_1(\bu; \tilde{T}_h - T_h, \cdot),
\end{equation*}
and so upon evaluation on $\chi^h_T$, recalling that both $\bu,\bu_h \in \bV_0$ from problem equivalence
and that the trilinear form $c_1(\cdot;\cdot,\cdot)$ is alternating over $\bV_0$ by (\ref{eq:c1-zero}), it follows that 
\begin{equation*}
c_1(\bu_h; T_h, \chi^h_T) - c_1(\bu; T, \chi^h_T)
= -c_1(\be_{\bu}; T_h, \chi^h_T) - c_1(\bu; \chi_T, \chi^h_T).
\end{equation*}
Secondly, we may do similar for $c_2(\cdot,\cdot;\cdot)$ by subtracting and adding the cross-term $c_2(\bu_h, \bu; \chi^h_T)$,
\begin{equation*}
c_2(\bu,\bu;\chi^h_T) - c_2(\bu_h, \bu_h; \chi^h_T)
= c_2(\bu - \bu_h,\bu; \chi^h_T) + c_2(\bu_h, \bu - \bu_h; \chi^h_T).
\end{equation*}
Now to estimate $\chi^h_T$ we shall make use of the $\rH^1_{\star}(\Omega)$-coercivity of $a_3(\cdot,\cdot)$ and the above equalities,
\begin{align*}
\min(\sigma_0, \alpha) \norm{\chi^h_T}^2
&  \leq a_3(\chi^h_T,\chi^h_T)\\
& = -c_1(\be_{\bu}, T_h,\chi^h_T) - c_1(\bu, \chi_T, \chi^h_T) - a_3(\chi_T,\chi^h_T)
    + c_2(\be_{\bu},\bu) + c_2(\bu_h, \be_{\bu})\\
&  \leq \Big(
    \big(
        \norm{c_1}\norm{T_h}_{1,\Omega} 
        + \norm{c_2}(\norm{\bu}_{r,\vdiv,\Omega} + \norm{\bu_h}_{r,\vdiv,\Omega})
    \big) \norm{\be_{\bu}}_{r,\vdiv,\Omega} \\
& \qquad \qquad + \big(
        \norm{c_1} \norm{\bu}_{r,\vdiv,\Omega} + \norm{a_3}
    \big) \norm{\chi_T}_{1,\Omega}
    \Big) \norm{\chi^h_T}_{1,\Omega}\\
&  := ( \,\, \tilde{\mathcal{C}}_1 \norm{\be_{\bu}}_{r,\vdiv,\Omega} + \tilde{\mathcal{C}}_2 \norm{\chi_T}_{1,\Omega} \,\, ) \norm{\chi^h_T}_{1,\Omega}.
\end{align*}
Utilising both the continuous dependence on data and the fact that each continuous and discrete fixed point $T,T_h$ resides within a respective ball around the origin, yields the following constant:
\begin{equation*}
\tilde{\mathcal{C}}_1 
\leq C_{\text{Sob}} \left(
    (1 + \frac{\mu}{\kappa c'} \beta \norm{\bg}_{\bL^\infty(\Omega)}) C_{s_1} R 
    + \frac{\mu}{\kappa c'} \beta \norm{\bg}_{\bL^\infty(\Omega)} C_{s_1,d} R_d
\right) = \mathcal{C}_1,
\end{equation*}
where $\mathcal{C}_1$ is as in \eqref{def:calC1}. 
Similarly, for $\mathcal{C}_2$ we obtain 
\begin{equation*}
\tilde{\mathcal{C}}_2
\leq C_{\text{Sob}} C_{s_1} \rho \beta \norm{\bg}_{\bL^\infty(\Omega)} R + \max(\sigma_0, \alpha)
=: \mathcal{C}_2.
\end{equation*}
This culminates in the following error estimate for the temperature field,
\begin{equation*}
\norm{e_{T}} \leq 
    \min(\sigma_0^{-1}, \alpha^{-1}) \, \mathcal{C}_1 \norm{\be_{\bu}}_{r,\vdiv,\Omega} 
        + (1 + \min(\sigma_0^{-1}, \alpha^{-1}) \, \mathcal{C}_2) \norm{\chi_T}_{1,\Omega},
\end{equation*}
for which we readily conclude with the following total error estimate
\begin{align*}
\norm{\be_{\bomega}}_{\bcurl_s,\Omega} &+ \norm{\be_{\bu}}_{r,\vdiv, \Omega} + \norm{e_{p}}_{0,\Omega} + \norm{e_T}_{1,\Omega}\\
&\leq (1 + \max(C_{s_1}, \beta_2^{-1}) \norm{\mathcal{A}}) \norm{(\bchi_{\bomega}, \bchi_{\bu}, \chi_{p})}\\
&\quad + (1 + \max(C_{s_1}, \beta_2^{-1}) C_{\Omega,r} \rho \beta \norm{\bg}_{\bL^\infty(\Omega)})
   (1 + \min(\sigma_0^{-1}, \alpha^{-1}) \mathcal{C}_2) \norm{\chi_T}_{1,\Omega}\\
&\quad + (1 + \max(C_{s_1}, \beta_2^{-1}) C_{\Omega,r} \rho \beta \norm{\bg}_{\bL^\infty(\Omega)})
   \min(\sigma_0^{-1}, \alpha^{-1}) \mathcal{C}_1 \norm{\be_{\bu}}_{r,\vdiv, \Omega}. 
\end{align*}
Given the assumption on parameters, subtracting the error term in velocity $\be_{\bu}$ and dividing throughout yields the claimed result, once the infimum of $((\tilde{\bomega}_h, \tilde{\bu}_h, \tilde{p}_h),\tilde{T}_h)$ in $\mathfrak{U}_h \times \rY_h$ is taken.
\end{proof}

\subsection{Specific finite element spaces}
By $\mathrm{P}_k(K)$ and $\mathbf{P}_k(K)$ we will denote scalar and vector polynomial spaces of degree up to $k$, defined locally on $K\in \cT_h$. In addition we denote by $\mathbf{RT}_k(K)=\mathbf{P}_k(K) \oplus \mathrm{P}_k(K)\bx$ the local Raviart--Thomas space and denote by $\mathbf{RT}_k(\cT_h)$ its global counterpart. 
From now on we use as discrete spaces  the $\bH(\bcurl_s,\Omega)$-conforming N\'ed\'elec elements of the first kind and order $k+1$ for vorticity (the local space denoted as $\mathbf{ND}_{k+1}(K)$), the $\bH^r(\vdiv,\Omega)$-conforming Raviart--Thomas elements of degree $k \geq 0$ for velocity approximation,  discontinuous and piecewise polynomials of degree $k$ for pressure, and continuous and piecewise polynomials of degree $k+1$ for temperature 
\begin{align}
\nonumber   \bZ_h &:=  \mathbf{ND}_{k+1}(\cT_h)\cap  \bH_\star(\bcurl_s,\Omega) = \{\bzeta_h\in {\bH_\star(\bcurl_s,\Omega)}: \bzeta_h|_K \in \mathbf{ND}_{k+1}(K),\quad \forall K\in \cT_h\},\\
\label{eq:fespaces} \bV_h &:= \mathbf{RT}_k(\cT_h)\cap \bH^r_\star(\vdiv,\Omega) = 
\{\bv_h \in {\bH^r_\star(\vdiv,\Omega)}: \bv_h|_K \in \mathbf{RT}_k(K),\quad \forall K\in \cT_h\},\\
\nonumber  \rQ_h &:= \mathrm{P}_k(\cT_h) =
\{q_h \in {\rL^2(\Omega)}: q_h|_K \in \mathrm{P}_k(K),\quad \forall K\in \cT_h\},\\
\nonumber  \rY_h &:= \mathrm{P}_{k+1}(\cT_h) \cap \mathrm{C}(\overline{\Omega})\cap \rH^1_\star(\Omega) = 
\{S_h \in \mathrm{C}(\overline{\Omega})\cap{\rH^1_\star(\Omega)}: S_h|_K \in \mathrm{P}_{k+1}(K),\quad \forall K\in \cT_h\}.
\end{align}

Let us denote by  $\cP_h:\rL^2(\Omega)\rightarrow \rQ_h$  the $\rL^2$-orthogonal projection into $\rQ_h$. In addition, let $\Gamma_h$ denote the set of facets on $\cT_h$ that lie on $\Gamma$ and denote by $\mathrm{P}_k(\Gamma_h)$ the space of piecewise polynomials of degree up to $k$ defined on each $e\in \Gamma_h$. We also denote by $\cP_h^\Gamma: \rL^1(\Gamma)\to \mathrm{P}_k(\Gamma_h)$ the orthogonal projector with respect to the $\rL^2(\Gamma)$-inner product. 
Consider for $t\in [1,\infty)$ the space 
\[ \bV^t = \{\bv \in \bH^t(\vdiv,\Omega): \bv \in \bW^{1,t}(K) \ \forall K\in \cT_h\}.\]
From \cite[Appendix A]{caucao23} we recall that the global Raviart--Thomas interpolator $\mathcal{I}_h^{\mathrm{RT}}:\bV_t\rightarrow \bV_h$ satisfies the Fortin-type property  
\begin{equation}\label{eq:commuting}
    \vdiv(\mathcal{I}_h^{\mathrm{RT}}(\bv)) = \cP_h(\vdiv\bv) \quad \forall \bv \in \bV_t.
\end{equation}
Moreover, we have that 
\begin{equation}
\label{eq:commuting-facet}
\mathcal{I}_h^{\mathrm{RT}}(\bv)\cdot \bn = \cP_h^\Gamma(\bv\cdot\bn) \quad \text{on} \ \Gamma \quad \forall \bv \in \bV_t,
\end{equation}
and we recall the following estimates
\begin{subequations}\label{eq:estimate-RT-Ph}
\begin{align}\label{estimate:RT}
    \|\mathcal{I}_h^{\mathrm{RT}}(\bv)\|_{\bL^{t}(\Omega)} &\lesssim \|\bv\|_{\bW^{1,t}(\Omega)} \quad \forall \bv \in \bW^{1,t}(\Omega),\\
    \|\cP_h(q)\|_{0,\Omega} &\lesssim \|q\|_{0,\Omega} \quad \forall q \in \rL^{2}(\Omega),\label{estimate:Lt}
\end{align}\end{subequations}
whose proof can be found in \cite{gatica22}.

\begin{remark}\label{lemma:uni-bound-RT}
Note that \eqref{estimate:RT} follows from the estimate  $\|\bv - \mathcal{I}_h^{\mathrm{RT}}(\bv)\|_{\bL^{t}(K)}\lesssim h_K^{l+1} |\bv|_{\bW^{1,t}(K)}$ for all $K\in \cT_h$ and $0\leq l\leq k$ (cf. \cite[Remark 4.2]{camano21}), using the condition $h_K\leq h\leq h_0$, where $h_0$ denotes the diameter of $\Omega$, and from the triangle inequality, we have
\[\ds \|\mathcal{I}_h^{\mathrm{RT}}(\bv)\|_{\bL^{t}(\Omega)}\leq \|\bv - \mathcal{I}_h^{\mathrm{RT}}(\bv)\|_{\bL^{t}(\Omega)} + \|\bv\|_{\bL^{t}(\Omega)} \leq c(h_0^{(l+1)/t} +1) \|\bv\|_{\bW^{1,t}(\Omega)},\]
with $c$ independent of $h$.
\end{remark}

It is important to remark that because of the interaction with the discrete space for vorticity, we will also require the space 
\[ \bV_{h,s} :=\mathbf{RT}_k(\cT_h)\cap \bH^s_\star(\vdiv,\Omega),\]
along with a dedicated version of the canonical Raviart--Thomas projection $\mathcal{I}_h^{\mathrm{RT}{,s}}:\bV_t\rightarrow \bV_{h,s}$, satisfying analogous properties as above. 
Similarly as for the Raviart--Thomas interpolator, we have in particular the following properties for the global N\'ed\'elec interpolator (see \cite[Lemma 16.8 \& {proof of Theorem 16.12}]{ern21}) $\mathcal{I}^{\mathrm{N}}_h:\bZ^t\to \bZ_h$ 
\begin{subequations}
\begin{align}
\label{eq:commuting-nedelec}
\bcurl(\mathcal{I}_h^\mathrm{N}(\bzeta)) &= \mathcal{I}_h^{\mathrm{RT},s}(\bcurl\bzeta) ,\\
  \|\mathcal{I}_h^{\mathrm{N}}(\bzeta)\|_{\bL^{2}(K)} &\lesssim \|\bzeta\|_{\bL^{2}(N(K))} + h\|\bcurl\bzeta\|_{\bL^t(N(K))} \lesssim \|\bzeta\|_{\bcurl_t,N(K)} ,\label{estimate:nedelec} 
\end{align}
\end{subequations}
for all $\bzeta \in \bZ^t$, where $N(K)$ denotes the neighbourhood of an element $K\in \cT_h$, and for $t\in (1,\infty)$ and $\delta t>2$, the domain space $\bZ^t$ is defined as (see 
\cite[eq. (16.10)]{ern21})
\[ \bZ^t : = \{ 
\bzeta \in \bH(\bcurl_t,\Omega): \bzeta \in \bW^{\delta,t}(K)
 \quad \forall K\in \cT_h\}.\]

\begin{remark}
The proof of \eqref{estimate:nedelec} follows the same arguments as \eqref{estimate:RT} (see Remark~\ref{lemma:uni-bound-RT}), utilising the properties of N\'ed\'elec interpolator.
\end{remark}
Note that, in essence, the only requirement on the domain spaces $\bV^t,\bZ^t$ is sufficient regularity so that the Raviart--Thomas and N\'ed\'elec degrees of freedom are linear functionals in their respective discrete spaces.  

Let us recall, from, e.g.,  \cite[Chapters 16, 17 and 22]{ern21} and \cite[Appendix A]{caucao23} the following approximation properties of the 
finite element subspaces \eqref{eq:fespaces}, which are obtained using the Bramble--Hilbert Lemma and scaling arguments applied to each contribution in the specific norms.   
Assume that 
\begin{enumerate}
\item $\bzeta \in \bH_\star(\bcurl_{s},\Omega)\cap  \bH^{m}(\Omega)$ with $\bcurl\bzeta \in \bW^{m,s}(\Omega)$,
\item $\bv \in \bH^{r}_\star(\vdiv,\Omega) \cap \bW^{m,r}(\Omega)$ with $\vdiv\bv \in \rH^m(\Omega)$,
\item $q\in  {\rH}^{m}(\Omega)$,
\item $S \in \rH^{m+1}(\Omega)$,
\end{enumerate} 
for some
$m\in(1/2,k+1]$. Then, there exists $C>0$ 
independent of $h$, such that
\begin{subequations}
\begin{align}
\|\bzeta-\mathcal{I}_h^{\mathrm{N}}\bzeta\|_{\bcurl_s,\Omega} &\leq C h^{m}(\|\bzeta\|_{m,\Omega} + \|\bcurl\bzeta\|_{\bW^{m,s}(\Omega)}),\label{Ap1}\\
\|\bv-\mathcal{I}_h^{\mathrm{RT}}\bv \|_{r,\vdiv,\Omega}&\leq C h^{m}(\|\bv\|_{\bW^{m,r}(\Omega)} + \|\vdiv\bv\|_{m,\Omega}), \label{Ap2}\\
\Vert q -\cP_h q \Vert_{0,\Omega} &\leq C h^{m}|q |_{m,\Omega},\label{Ap3}\\
\|S-\mathcal{I}_hS \|_{1,\Omega} &\leq  C h^{m}\|S\|_{1+m,\Omega},\label{Ap4}
\end{align}
\end{subequations}
where  $\mathcal{I}_h$ denotes  the 
 Lagrange interpolator.

\subsection{Verification of general hypotheses}
\paragraph{\underline{Inf-sup condition \ref{A1}}.} Consider $\bv_h\in \bV_{0,h}$. We can readily construct $\mathcal{D}_s[\mathcal{J}_r(\bv_h)] \in \bL^s(\Omega)$. By construction this function has zero divergence. Then, guided by the arguments used in the first part of the proof of Lemma~\ref{lem:inf-sup} we can find the unique $\bz \in \bH_\star(\bcurl_s,\Omega)$ such that 
\begin{equation}\label{baux03}\bcurl \bz = \mathcal{D}_s[\mathcal{J}_r(\bv_h)] , \quad \vdiv \bz = 0,\end{equation}
and 
\begin{equation}\label{baux04}
\|\bz\|_{\bcurl_s,\Omega} \leq \|\bz\|_{\bW^{1,s}(\Omega)}\lesssim \| \mathcal{D}_s[\mathcal{J}_r(\bv_h)] \|_{\bL^s(\Omega)},\end{equation} 
where the hidden constant only depends on the domain. 

We now define $\tilde{\bz}_h$ as the N\'ed\'elec interpolant of the $\bz$ found above
\[ \tilde{\bz}_h : = \mathcal{I}^{\mathrm{N}}_h\bz \in \bZ_h,\]
and use the commutativity of the N\'ed\'elec and Raviart--Thomas interpolation \eqref{eq:commuting-nedelec} to write 
\begin{equation}\label{baux06}\bcurl \tilde{\bz}_h = \bcurl \mathcal{I}^{\mathrm{N}}_h\bz =  \mathcal{I}^{\mathrm{RT}{,s}}_h(\bcurl \bz) = \mathcal{I}^{\mathrm{RT}{,s}}_h(\mathcal{D}_s[\mathcal{J}_r(\bv_h)] ) = {\mathcal{D}_s[\mathcal{J}_r(\bv_h)] },\end{equation}
where we have employed \eqref{baux03} and the fact that $\mathcal{D}_s[\mathcal{J}_r(\bv_h)]$ is in $\bV^t$ {as well as in $\bV_{h,s}$}.

Note also that since $\bz \in \bL^2(\Omega)$ (cf. proof of Lemma~\ref{lem:inf-sup}), then $\tilde{\bz}_h$ is also in $\bL^2(\Omega)$. Then, using triangle inequality, the bound \eqref{baux04}, the continuity of the N\'ed\'elec interpolator  \eqref{estimate:nedelec}, and \eqref{baux06}, we can write 
\begin{equation}\label{baux07} \|\tilde{\bz}_h\|_{\bcurl_s,\Omega} 
\leq \| \mathcal{I}^{\mathrm{N}}_h \bz\|_{\bL^2(\Omega)} + \| \bcurl \mathcal{I}^{\mathrm{N}}_h\bz 
\|_{\bL^s(\Omega)} \lesssim \| \mathcal{D}_s[\mathcal{J}_r(\bv_h)]  \|_{\bL^s(\Omega)}. \end{equation}

Therefore, using the definition of the bilinear form $b_1(\cdot,\cdot)$, relation  \eqref{baux06}, and the bound \eqref{baux07}, we can derive the inf-sup condition as follows 
\begin{align*}  \sup_{\bzeta_h \in \bZ_h\setminus\{\cero\}} \frac{b_1(\bzeta_h,\bv_h)}{\|\bzeta_h\|_{\bcurl_s,\Omega}} & \geq \frac{\int_\Omega \bcurl\tilde{\bz}_h\cdot \bv_h}{\|\tilde{\bz}_h\|_{\bcurl_s,\Omega}} = \frac{\int_\Omega \mathcal{I}^{\mathrm{RT}{,s}}_h(\cD_s[\mathcal{J}_r(\bv_h)])  \cdot \bv_h}{\|\tilde{\bz}_h\|_{\bcurl_s,\Omega}} \\
&  =  \frac{\int_\Omega {\cD_s[\mathcal{J}_r(\bv_h)]}  \cdot \bv_h}{\|\tilde{\bz}_h\|_{\bcurl_s,\Omega}} =
\frac{\| {\cD_s[\mathcal{J}_r(\bv_h)]} \|_{\bL^{s}(\Omega)} \|\bv_h\|_{\bL^r(\Omega)}}{\|\tilde{\bz}_h\|_{\bcurl_s,\Omega}} 
\\
& \gtrsim  \|\bv_h\|_{\bL^r(\Omega)} \qquad \forall \bv_h \in \bV_{0,h},\end{align*}
where we have also used \eqref{eq-lem-S-2}. The discrete inf-sup constant is independent of $h$, but depends on the continuous dependence on data of the auxiliary problem and on the continuity bound of the N\'ed\'elec interpolator. 

\paragraph{\underline{Inf-sup condition \ref{A2}}.} We start by considering $q_h\in\rQ_h$. Then, 
mimicking the proof of the continuous inf-sup condition for $b_2(\cdot,\cdot)$ in the second part of Lemma~\ref{lem:inf-sup}, we are able to construct 
$\tilde{\bv} \in \bH^1(\Omega)$ satisfying 
\[ \vdiv{\tilde{\bv}} = q_h \qan \|\tilde{\bv}\|_{1,\Omega} \lesssim \|q_h\|_{0,\Omega}.\]
Then we take $\tilde{\bv}_h = \mathcal{I}_h^{\mathrm{RT}}\tilde{\bv}$ and use the properties of the Raviart--Thomas interpolator \eqref{eq:commuting}, to get 
\[ \vdiv \tilde{\bv}_h = \vdiv \mathcal{I}_h^{\mathrm{RT}}\tilde{\bv} = \cP_h(\vdiv\tilde{\bv}) = \cP_h(q_h) \quad \text{in } \ \Omega,\]
and since from \eqref{eq:commuting-facet} we also have that 
\[ \tilde{\bv}_h \cdot \bn = \mathcal{I}_h^{\mathrm{RT}}(\tilde{\bv}) \cdot \bn = \cP_h^\Gamma(\tilde{\bv}\cdot\bn) = 0 \quad \text{on } \ \Gamma ,\]
then we can verify that $\tilde{\bv}_h \in \bV_h$. Next we use the stability of the Raviart--Thomas interpolator \eqref{estimate:RT} and of the $\rL^2$-projection \eqref{estimate:Lt} to obtain 
\begin{align*}
    \|\tilde{\bv}_h\|_{\bL^r(\Omega)} &=  \|\mathcal{I}_h^{\mathrm{RT}}(\tilde{\bv}) \|_{\bL^r(\Omega)} \lesssim 
    \|\tilde{\bv}\|_{\bH^1(\Omega)} \lesssim \|q_h\|_{0,\Omega},\\
    \|\vdiv\tilde{\bv}_h\|_{0,\Omega} &=  \|\cP_h (q_h)\|_{0,\Omega} \lesssim \|q_h\|_{0,\Omega},
\end{align*}
and thus we can assert that $\|\tilde{\bv}_h\|_{r,\vdiv,\Omega} \lesssim \|q_h\|_{0,\Omega}$. 

In this way, from the previous relations it follows  that 
\begin{align*}
   \sup_{\bv_h \in \bV_h\setminus\{\cero\}} \frac{b_2(\bv_h,q_h)}{\|\bv_h\|_{r,\vdiv,\Omega}} 
   & \geq 
   \frac{\int_\Omega\vdiv(\tilde{\bv}_h)\,q_h}{\|\tilde{\bv}_h\|_{r,\vdiv,\Omega}} = 
    \frac{\int_\Omega\cP_h(q_h)\,q_h}{\|\tilde{\bv}_h\|_{r,\vdiv,\Omega}} =  \frac{\|q_h\|^2_{0,\Omega}}{\|\tilde{\bv}_h\|_{r,\vdiv,\Omega}} 
    \geq \hat{\beta}_2 \|q_h\|_{0,\Omega} \quad \forall q_h\in \rQ_h,
\end{align*}
where the inf-sup constant depends on the continuous dependence on data of the continuous Stokes equations and on the boundedness constants of the Raviart--Thomas interpolation and $\rL^2$-projection \eqref{eq:estimate-RT-Ph}, and it is independent of $h$. 

\paragraph{\underline{Conditions \ref{A3}--\ref{A4}}.} These assumptions can be straightforwardly verified from the definition of the finite element spaces.  

\subsection{Convergence rates}
Given that the desired assumptions \ref{A1}--\ref{A4} which yield the conditional approximation quasi-optimality \eqref{cea-estimate} are satisfied, the following corollary is readily afforded by additionally bounding above by the interpolator estimates \eqref{Ap1}-\eqref{Ap4}.
\begin{lemma}
Suppose the conditions of \eqref{eq:small-r} and \eqref{discrete-small-r} are met such that $T,T_h$ are the unique fixed-point solutions to the continuous and discrete problem, respectively.
Additionally suppose that the assumption on constants \eqref{eq:assumption-cea} is met.
Then there exists an $h$-independent constant $C >0$ such that,
\begin{align*}
&\norm{\bomega - \bomega_h}_{\bcurl_s, \Omega} + \norm{\bu - \bu_h}_{r,\vdiv,\Omega} + \norm{p - p_h}_{0,\Omega} + \norm{T - T_h}_{1,\Omega} \\
&\quad  \leq C h^m (
\norm{\bomega}_{m,\Omega} + \norm{\bcurl \bomega}_{\bW^{m,s}(\Omega)} 
+ \norm{\bv}_{\bW^{m,r}(\Omega)} + \norm{\vdiv \bv}_{m,\Omega}
+ |p|_{m,\Omega}
+ \norm{S}_{1+m, \Omega}
).
\end{align*}
\end{lemma}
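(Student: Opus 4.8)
The plan is to combine the conditional quasi-optimality of Lemma~\ref{cea-estimate} with the interpolation estimates \eqref{Ap1}--\eqref{Ap4}, so that the argument reduces essentially to bookkeeping rather than new analysis. Under the standing hypotheses \eqref{eq:small-r}, \eqref{discrete-small-r} and \eqref{eq:assumption-cea}, Lemma~\ref{cea-estimate} already supplies the C\'ea-type bound
\[
\norm{\bomega - \bomega_h}_{\bcurl_s, \Omega} + \norm{\bu - \bu_h}_{r,\vdiv,\Omega} + \norm{p - p_h}_{0,\Omega} + \norm{T - T_h}_{1,\Omega} \lesssim \dist(\bomega, \bZ_h) + \dist(\bu, \bV_h) + \dist(p, \rQ_h) + \dist(T, \rY_h),
\]
with an $h$-independent hidden constant, so it suffices to control the four best-approximation distances on the right-hand side.

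First I would replace each distance by the error committed by the canonical interpolant attached to the corresponding discrete space. Since each distance is an infimum over the whole subspace, it is dominated by the norm of the difference with any single admissible element; selecting the N\'ed\'elec interpolant $\mathcal{I}_h^{\mathrm{N}}$ for vorticity, the Raviart--Thomas interpolant $\mathcal{I}_h^{\mathrm{RT}}$ for velocity, the $\rL^2$-projection $\cP_h$ for pressure, and the Lagrange interpolant $\mathcal{I}_h$ for temperature gives
\[
\dist(\bomega, \bZ_h) \leq \norm{\bomega - \mathcal{I}_h^{\mathrm{N}}\bomega}_{\bcurl_s,\Omega}, \quad \dist(\bu, \bV_h) \leq \norm{\bu - \mathcal{I}_h^{\mathrm{RT}}\bu}_{r,\vdiv,\Omega}, \quad \dist(p, \rQ_h) \leq \norm{p - \cP_h p}_{0,\Omega}, \quad \dist(T, \rY_h) \leq \norm{T - \mathcal{I}_h T}_{1,\Omega}.
\]
Then I would apply \eqref{Ap1}--\eqref{Ap4} term by term, each of which contributes the factor $h^m$ multiplied by the corresponding regularity seminorm; summing the four contributions and absorbing all constants into a single $h$-independent $C$ yields the stated bound, once the infimum over $\mathfrak{U}_h \times \rY_h$ in the C\'ea estimate has been taken.

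The only point requiring genuine care — and what I regard as the substance of the step — is verifying that the exact solution is regular enough for the interpolation estimates to be invoked, that is, that the smoothness hypotheses (1)--(4) preceding \eqref{Ap1}--\eqref{Ap4} hold for the quadruple $(\bomega,\bu,p,T)$ with a common index $m\in(1/2,k+1]$. Concretely, the N\'ed\'elec and Raviart--Thomas degrees of freedom must be well defined on $\bomega$ and $\bu$, which is precisely the content of requiring $\bomega\in\bH^m(\Omega)$ with $\bcurl\bomega\in\bW^{m,s}(\Omega)$ and $\bu\in\bW^{m,r}(\Omega)$ with $\vdiv\bu\in\rH^m(\Omega)$; these are taken as regularity assumptions, and once they are granted the remaining manipulations are immediate. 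I do not expect any new obstacle from the Banach-space framework at this stage, since the interpolation bounds \eqref{Ap1}--\eqref{Ap4} were already established directly in the relevant $\bL^r$/$\bL^s$ graph norms, so no reworking of the estimates in the non-Hilbertian setting is needed here.
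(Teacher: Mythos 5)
Your proposal is correct and follows exactly the route the paper takes: the paper states this result as an immediate consequence of the C\'ea estimate of Lemma~\ref{cea-estimate} combined with the interpolation bounds \eqref{Ap1}--\eqref{Ap4}, bounding each best-approximation distance by the error of the corresponding canonical interpolant. Your additional observation that the regularity hypotheses (1)--(4) preceding \eqref{Ap1}--\eqref{Ap4} must be assumed of the exact solution is a fair point that the paper leaves implicit, but it does not change the argument.
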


\section{Numerical examples}\label{sec:results}
The numerical implementation uses the open-source finite element library \texttt{Gridap} \cite{badia22}. In all cases the Newton--Raphson iterations are stopped once either the absolute or the relative $\ell^2$-norm of the residuals get below $10^{-8}$, and we use the direct method MUMPS for the tangent linear systems.

We proceed to validate the finite element method by  convergence verification, and then we apply the proposed formulation in the simulation of non-isothermal rotational flows.

\subsection{Accuracy tests} We consider a 2D and a 3D manufactured solutions computation with mixed boundary conditions. On the domain $\Omega = (0,2) \times(0,1)$ we consider closed-form solutions to the vorticity-based non-isothermal dissipative flow equations as follows 
\begin{gather*}
\bu(x,y) = \begin{pmatrix}
\cos(\pi x)\sin(\pi y)\\
-\sin(\pi x)\cos(\pi y)  
\end{pmatrix}, \quad \omega = \sqrt{\mu'}\curl \bu, \\ 
 p(x,y) = \frac12x^4-y^4,\quad 
T(x,y) = 1+\cos^2(\pi xy),
\end{gather*} 
and the external force and heat source terms, together with non-homogeneous essential boundary conditions are computed from these manufactured solutions. The part of the boundary $\Gamma$ where we impose the non-homogeneous counterpart of \eqref{bc:Gamma} is conformed by the segments $x=0$ and $y=0$, and  $\Sigma = \partial\Omega \setminus \Gamma$ by $x=2$ and $y=1$. 
For the 3D case we take  $\Omega = (0,1)\times (0,\frac12)\times(0,\frac12)$ together with the manufactured solutions 
\begin{gather*}
\bu(x,y,z) = \begin{pmatrix}
\sin^2(\pi x)\sin(\pi y)\sin(2\pi z)\\
\sin(\pi x)\sin^2(\pi y)\sin(2\pi z)\\ 
-[\sin(2\pi x)\sin(\pi y)+\sin(\pi x)\sin(2 \pi y)]\sin^2(\pi z)  
\end{pmatrix}, \quad \bomega = \sqrt{\mu'}\bcurl \bu, \\
\quad p(x,y,z) = \sin(\pi x)\cos(\pi y)\sin(\pi z),\quad 
T(x,y,z) = 1+ \sin^2(\pi x)\sin^2(\pi y)\sin^2(\pi z),
\end{gather*} 
and the boundary splitting has the faces $x=0$, $y=0$ and $z=0$ in $\Gamma$, and $x=1$, $y=\frac12$, $z=\frac12$ in $\Sigma$. 

\begin{table}[!t]
\setlength{\tabcolsep}{2.7pt}
\begin{center}
\begin{tabular}{|rccccccccccc|}
\hline
DoF  & $h$ &  $e_{\curl_s}(\omega)$  &  \texttt{rate}  &   $e_{r,\vdiv}(\bu)\!$ 
&   \texttt{rate}  &  $e_0(p)$  &  \texttt{rate} &  $e_1(T)$  &  \texttt{rate} & $\|\vdiv\bu_h\|_{\ell^\infty}$ &  \texttt{it} \\  
\hline
\multicolumn{12}{|c|}{Errors and convergence rates for $k = 0$}\\
\hline
   132 & 0.5000 & 8.87e+0 & $\star$ & 4.25e-01 & $\star$ & 7.81e-01 & $\star$ & 4.30e+0 & $\star$ & 8.88e-16 & 4\\
   486 & 0.2500 & 4.23e+0 & 1.07 & 2.07e-01 & 1.04 & 3.75e-01 & 1.06 & 2.22e+0 & 0.95 & 1.78e-15 & 4\\
  1866 & 0.1250 & 2.06e+0 & 1.04 & 1.02e-01 & 1.02 & 1.85e-01 & 1.02 & 1.16e+0 & 0.94 & 3.55e-15 & 4\\
  7314 & 0.0625 & 1.02e+0 & 1.02 & 5.09e-02 & 1.00 & 9.20e-02 & 1.01 & 5.69e-01 & 1.03 & 7.11e-15 & 4\\
 28962 & 0.0312 & 5.05e-01 & 1.01 & 2.54e-02 & 1.00 & 4.59e-02 & 1.00 & 2.81e-01 & 1.02 & 1.42e-14 &4 \\
 115266 & 0.0156 & 2.52e-01 & 1.00 & 1.27e-02 & 1.00 & 2.30e-02 & 1.00 & 1.40e-01 & 1.01 & 5.68e-14 &4 \\
\hline
\multicolumn{12}{|c|}{Errors and convergence rates for $k = 1$}\\
\hline
  422 & 0.5000 & 1.67e+0 & $\star$ & 9.08e-02 & $\star$ & 7.56e-02 & $\star$ & 1.66e+0 & $\star$ & 7.16e-14 & 4 \\
  1610 & 0.2500 & 4.29e-01 & 1.96 & 2.42e-02 & 1.91 & 1.78e-02 & 2.09 & 6.19e-01 & 1.42 & 1.46e-13 & 4 \\
  6290 & 0.1250 & 1.07e-01 & 2.01 & 6.13e-03 & 1.98 & 4.37e-03 & 2.02 & 1.50e-01 & 2.05 & 3.07e-13 & 4 \\
 24866 & 0.0625 & 2.65e-02 & 2.01 & 1.54e-03 & 2.00 & 1.09e-03 & 2.01 & 3.83e-02 & 1.97 & 6.37e-13 & 4 \\
 98882 & 0.0312 & 6.61e-03 & 2.00 & 3.85e-04 & 2.00 & 2.72e-04 & 2.00 & 9.64e-03 & 1.99 & 1.24e-12 & 4 \\
 394370 & 0.0156 & 1.65e-03 & 2.00 & 9.62e-05 & 2.00 & 6.80e-05 & 2.00 & 2.41e-03 & 2.00 & 2.54e-12 & 4\\
 \hline
\end{tabular}
\end{center}
\caption{Accuracy test in 2D. Error history (errors for each field variable in the corresponding norm on a sequence of successively refined grids, numerically computed  convergence rates, 
and discrete norm of the divergence of the approximate velocity) with $r=6$, $s=\frac65$, and for $\mathrm{P}_{k+1}-\mathbf{RT}_k-\mathrm{P}^{\mathrm{disc}}_k-\mathrm{P}_{k+1}$ elements with different polynomial degrees, and iteration count for the nonlinear Newton--Raphson solver. The symbol $\star$ indicates that no rate is computed at the initial coarse mesh refinement.} \label{table01}
\end{table}

\begin{table}[!t]
\setlength{\tabcolsep}{2.7pt}
\begin{center}
\begin{tabular}{|rccccccccccc|}
\hline
DoF  & $h$ &  $e_{\bcurl_s}(\bomega)$  &  \texttt{rate}  &   $e_{r,\vdiv}(\bu)\!$ 
&   \texttt{rate}  &  $e_0(p)$  &  \texttt{rate} &  $e_1(T)$  &  \texttt{rate} & $\|\vdiv\bu_h\|_{\ell^\infty}$ &  \texttt{it} \\  
\hline
\multicolumn{12}{|c|}{Errors and convergence rates for $k = 0$}\\
\hline
    91 & 0.8660 & 2.28e+1 & $\star$ & 4.10e-01 & $\star$ & 4.02e-01 & $\star$ & 2.64e+0 & $\star$ & 1.67e-16  & 4\\
   553 & 0.4330 & 1.62e+1 & 0.49 & 2.20e-01 & 0.90 & 1.42e-01 & 1.50 & 1.40e+0 & 0.91 & 2.66e-15  & 4\\
  3841 & 0.2165 & 8.86e+0 & 0.87 & 1.15e-01 & 0.93 & 5.29e-02 & 1.43 & 5.19e-01 & 1.44 & 6.00e-15 & 3\\ 
 28609 & 0.1083 & 4.56e+0 & 0.96 & 5.88e-02 & 0.97 & 1.73e-02 & 1.61 & 1.83e-01 & 1.50 & 2.00e-14 & 3 \\
220801 & 0.0541 & 2.30e+0 & 0.99 & 2.95e-02 & 0.99 & 6.61e-03 & 1.39 & 7.41e-02 & 1.31 & 5.70e-14 & 3\\
\hline
\multicolumn{12}{|c|}{Errors and convergence rates for $k = 1$}\\
\hline
  365 & 0.8660 & 1.20e+1 & $\star$ & 2.03e-01 & $\star$ & 2.29e-01 & $\star$ & 7.76e-01 & $\star$ & 4.05e-15  & 4\\
  2417 & 0.4330 & 4.96e+0 & 1.28 & 6.21e-02 & 1.71 & 6.44e-02 & 1.83 & 2.52e-01 & 1.62 & 2.11e-14  & 4\\
 17537 & 0.2165 & 1.39e+0 & 1.84 & 1.69e-02 & 1.88 & 1.13e-02 & 2.51 & 4.74e-02 & 2.41 & 7.76e-14  & 3\\
133505 & 0.1083 & 3.67e-01 & 1.92 & 4.38e-03 & 1.95 & 2.04e-03 & 2.47 & 1.10e-02 & 2.11 & 6.12e-13  & 3\\
1041665 & 0.0541 & 9.26e-02 & 1.96 & 1.11e-03 & 1.98 & 4.29e-04 & 2.25 & 2.72e-03 & 2.02 & 2.47e-12 & 3\\
 \hline
\end{tabular}
\end{center}
\caption{Accuracy test in 3D. Error history (errors for each field variable in the corresponding norm, with $r=6$, $s=\frac65$, on a sequence of successively refined grids, numerically computed convergence rates, 
and discrete norm of the divergence of the approximate velocity)   for $\mathbf{ND}_{k+1}-\mathbf{RT}_k-\mathrm{P}^{\mathrm{disc}}_k-\mathrm{P}_{k+1}$ elements with different polynomial degrees, and iteration count for the nonlinear Newton--Raphson solver. The symbol $\star$ indicates that no rate is computed at the initial coarse mesh refinement.} \label{table02}
\end{table}

In both 2D and 3D cases the remaining model  parameters are taken all as one 
 $\bg = (0,0,-1)^{\tt t}$ (and $\bg = (0,-1)^{\tt t}$ in 2D),  $\rho = \mu = \mu'=\kappa = c'=\beta = T_0=1$ (in their respective units). 
 Approximate solutions are computed on a sequence of $\texttt{n}_k^{\max}$ successively refined uniform  tetrahedral (triangular in 2D) meshes. Then we generate the error history at each refinement level, consisting of the error of each unknown 
 \begin{gather*} e_{\curl_s}(\bomega) = \|\bomega-\bomega_h\|_{\bcurl_s,\Omega}, \qquad e_{r,\vdiv}(\bu)= \|\bu-\bu_h\|_{r,\vdiv,\Omega}, \\ e_0(p) = \|p-p_h\|_{0,\Omega}, \qquad e_1(T) = \|T-T_h\|_{1,\Omega}, \end{gather*}
with $r=6$, $s=\frac65$, as well as the experimental convergence rate of the error decay as 
 \[ \texttt{rate} = \frac{\log(e_i(\cdot))- \log(e_{i+1}(\cdot))}{\log(h_i)- \log(h_{i+1})},\]
where we denote by $e_i$ the error associated with an approximation computed on the $i$-th mesh refinement level having a grid of meshsize $h_i$.

\begin{figure}[t!]
    \centering
\includegraphics[width=0.45\textwidth]{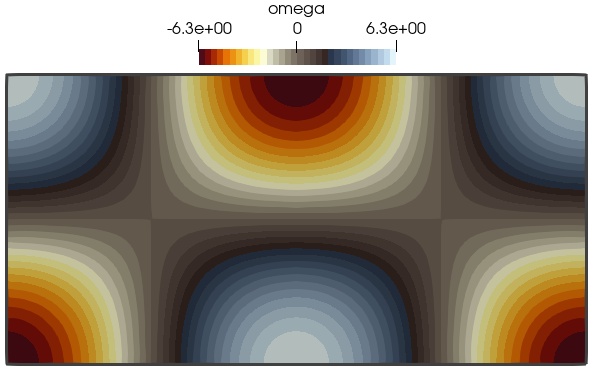}
\includegraphics[width=0.45\textwidth]{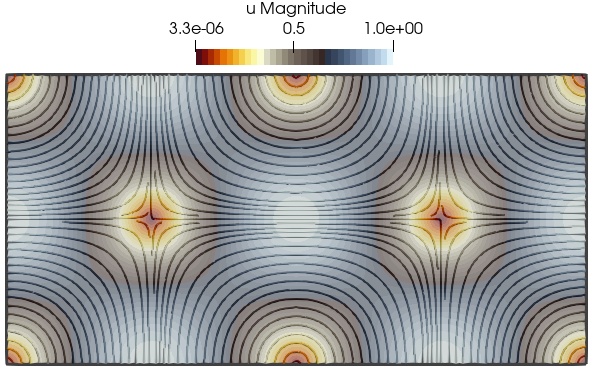}\\
\includegraphics[width=0.45\textwidth]{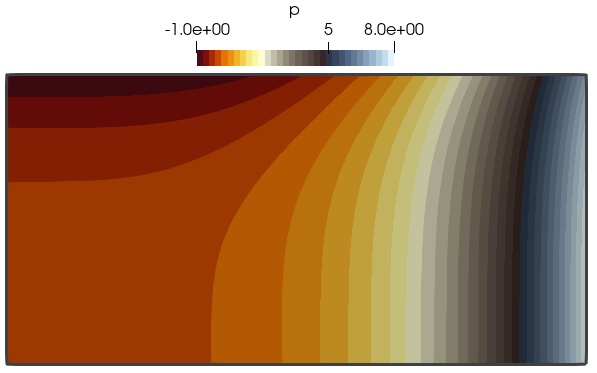}
\includegraphics[width=0.45\textwidth]{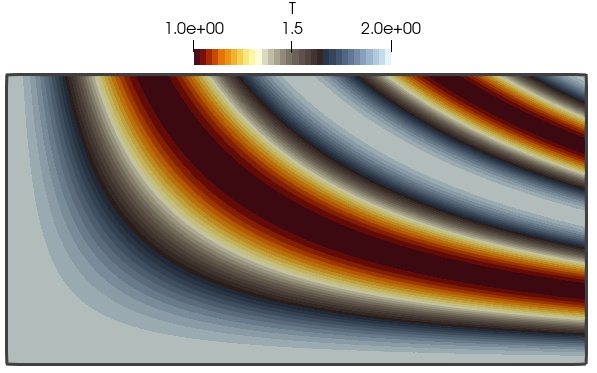}\\[2ex]
\includegraphics[width=0.45\textwidth]{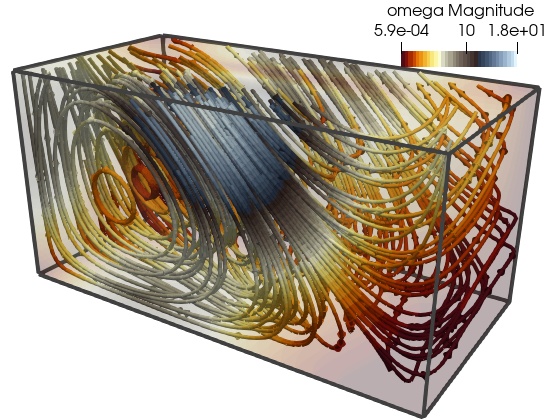}
\includegraphics[width=0.45\textwidth]{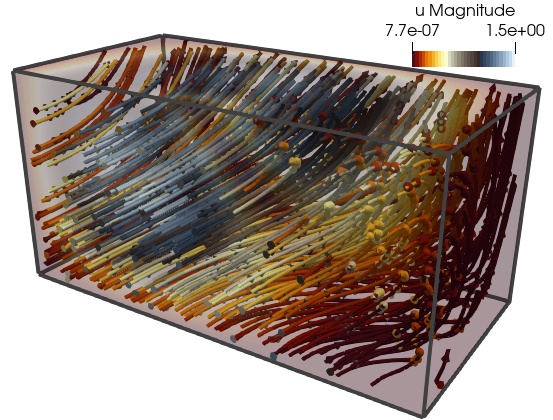}\\
\includegraphics[width=0.45\textwidth]{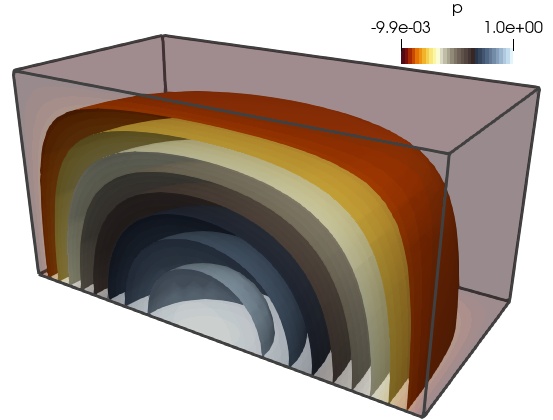}
\includegraphics[width=0.45\textwidth]{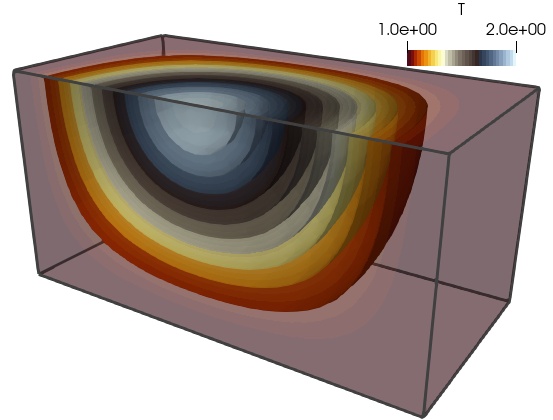}
    \caption{Accuracy tests in 2D and 3D. Approximate solutions (vorticity distribution and streamlines, velocity magnitude and streamlines, pressure profile, and temperature field) computed with the lowest-order methods.}
    \label{fig:ex01}
\end{figure}

 Over all refinements, a maximum of four iterations for the Newton--Raphson method are required to reach a tolerance (either absolute or relative) of $10^{-8}$ on the residual. 
For this particular case, and for the two lowest polynomial degrees $k=0,1$, we display the error history in Tables~\ref{table01}-\ref{table02} for the 2D and 3D case, respectively, where we observe that the method attains an asymptotic optimal convergence of $O(h^{k+1})$ as anticipated by the analysis in Section~\ref{sec:error}. The second-last column of the tables present the discrete $\ell^\infty$-norm of the divergence of the discrete velocity, confirming that the method is mass-conservative. Approximate solutions for the two cases are reported in Figure~\ref{fig:ex01}.

\subsection{Non-isothermal through-flow in a porous channel}
For our second example we model the flow of cold water through a channel with five hot cylinders. The design of the test aims at heating the liquid and observing buoyancy effects as well. The channel has length $L=1.7$ and height $H=1$ (adimensional units). The cylinders have radii of approximately 0.1 and are slightly unsymmetrically distributed. The flow enters the channel from the left segment and exits on the right end.  
For this test we  consider a time-dependent model (adding the terms $\partial_t \bu$ to the momentum balance and $\partial_t T$ to the thermal balance), and adopt a simple backward Euler time discretisation  with constant time step $\Delta t = 0.005$ and run the simulation until the final time $t=1$. We follow a similar flow configuration as in, e.g., \cite[Chapter 15]{quarteroni94} and  take the following values for the remaining parameters $\nu = 0.001$, $\gg=(0,-9.8)^{\tt t}$, $\rho = 1$, $\mu = 10^{-4}$, $K=1$, $\mu'=10^{-3}$, $\beta =c'=1$, $\alpha=0.1$. For the thermal equation we set a prescribed cold temperature on the inlet $T_{\mathrm{cold}} = 10$, hot at the cylinders' surface $T_{\mathrm{hot}}=50$, and consider zero-flux boundary conditions at the horizontal walls and at the outlet. For the flow problem we 
impose a parabolic velocity profile and a compatible vorticity on the inlet 
\[\bu_{\mathrm{in}} = (1.5\arctan(40y(1-y)),0)^{\tt t},\qquad \omega_{\mathrm{in}} = -60\sqrt{\nu}\frac{2y-1}{1600y^2(y-1)^2+1},\] 
slip velocity condition at the walls and cylinders, 
and on the outlet assume zero  pressure $p_{\mathrm{out}}=0$ (the vorticity is not prescribed at walls and cylinders). Then it suffices to add the following term in the weak form of the constitutive equation for rescaled vorticity 
\[ \sqrt{\mu'}\langle \bu_h\times\bn, \bzeta_h\rangle_{\mathrm{out}}.\]

\begin{figure}[!t]
 \begin{center}
\includegraphics[width=0.45\textwidth]{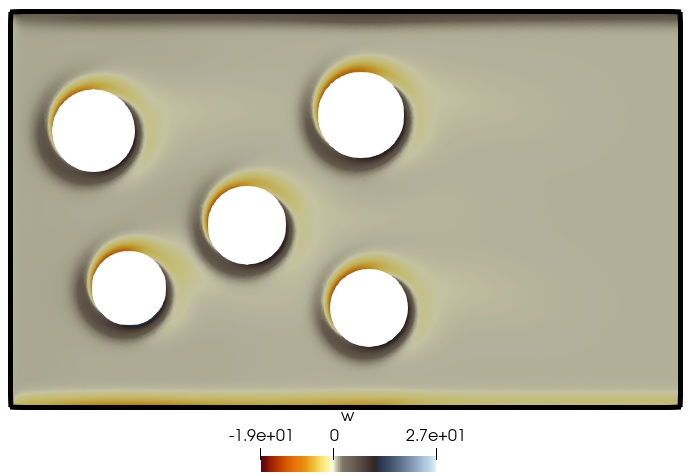}
\includegraphics[width=0.45\textwidth]{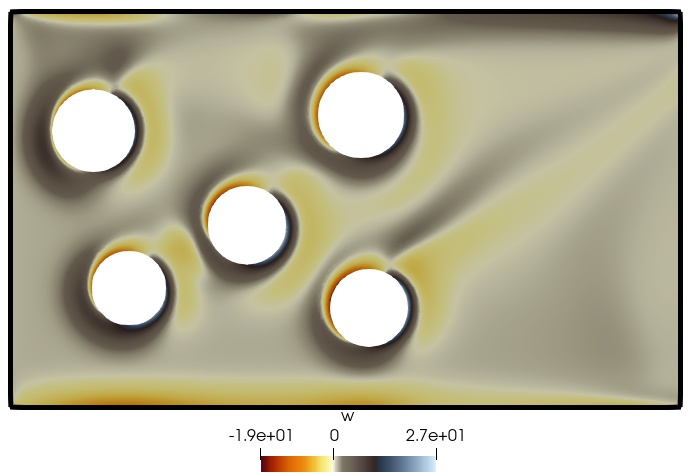}\\
\includegraphics[width=0.45\textwidth]{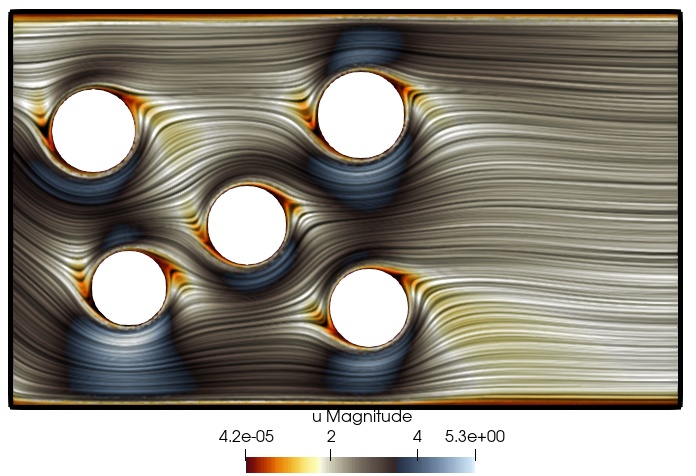}
\includegraphics[width=0.45\textwidth]{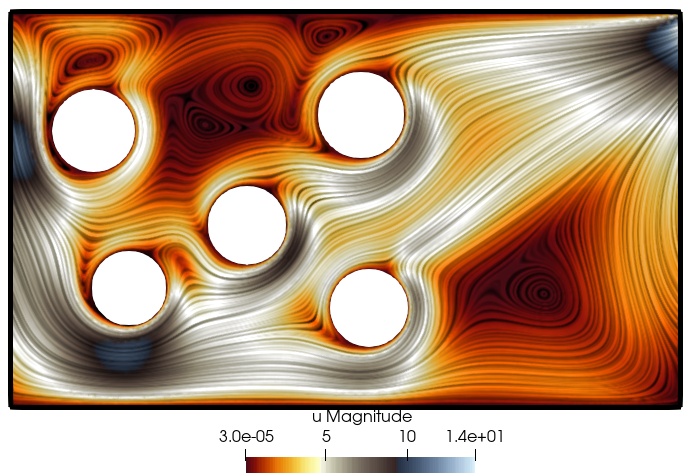}\\
\includegraphics[width=0.45\textwidth]{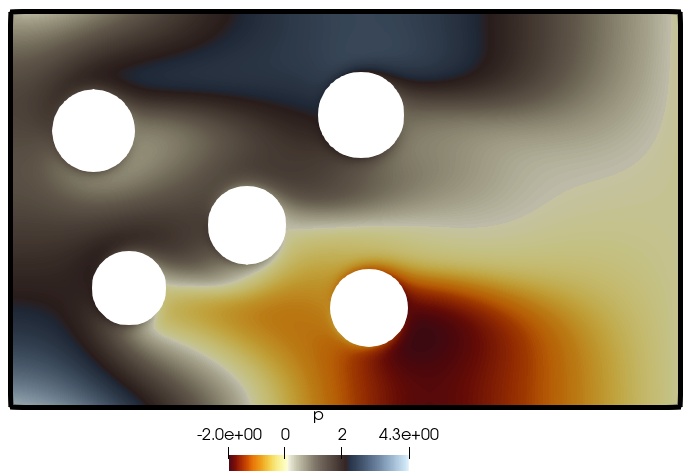}
\includegraphics[width=0.45\textwidth]{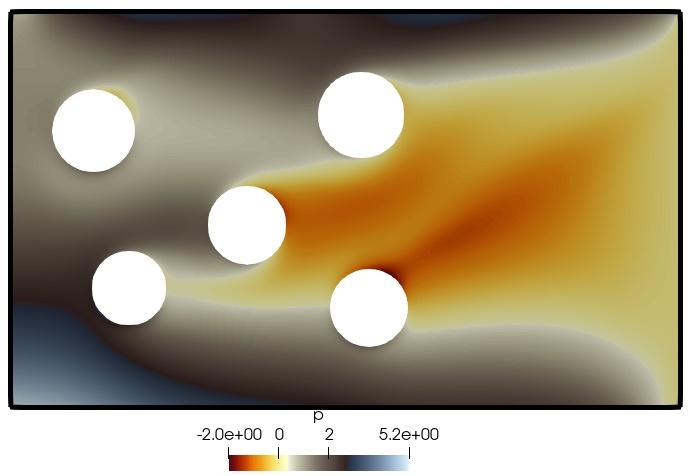}\\
\includegraphics[width=0.45\textwidth]{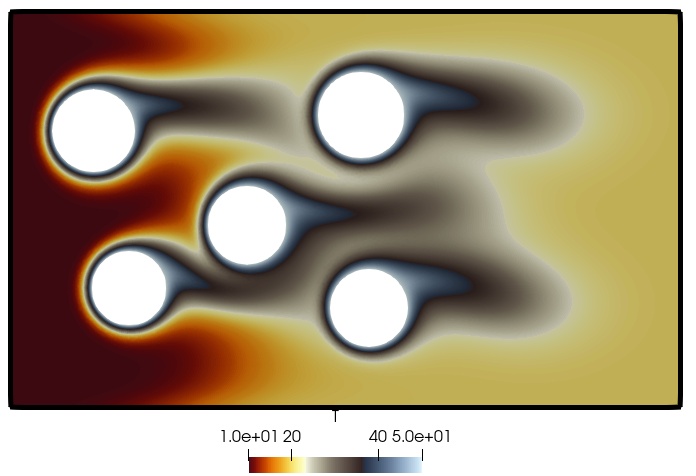}
\includegraphics[width=0.45\textwidth]{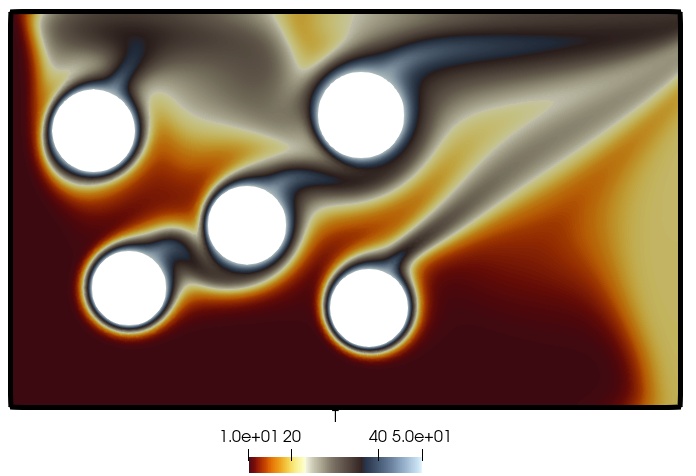}
 \end{center}
  
\vspace{-0.3cm}
\caption{Example 2. Snapshots at $t=0.2$ (left) and $t=1$ (right) for the the rescaled vorticity (top), velocity magnitude and line integral convolution (second row), pressure profile (third row), and temperature distribution (bottom) for the channel flow past five cylinders with $\mu=10^{-4}$. Here we have used the second-order scheme with $k=1$.}\label{fig:ex02}
\end{figure}

The system is able to produce the expected  phenomenon buoyancy (temperature rises towards the top wall forming channels of hot liquid), and we can also observe  vortexes behind the cylinders. The results are depicted in the different panels of Figure~\ref{fig:ex02}.  For this test we have used the second-order scheme with $k=1$.

\section{Concluding remarks}
Studying natural convection in highly permeable porous media considering vorticity and viscous dissipation has the potential to advance our understanding of complex fluid systems and improving various industrial processes. These multiphysics models include fluid dynamics, heat transfer, and porous media mechanisms. We prove that the governing equations are well-posed using the Banach fixed-point theory and perturbed saddle-point theory in Banach spaces. The discrete problem is shown to be well-posed and the analysis requires two Raviart--Thomas interpolators to match the regularity requirements of the continuous problem. We have presented numerical tests that confirm the properties of the proposed numerical methods. 
It still remains to tackle the time-dependent version of the problem, a mixed formulation for the thermal energy equation (to provide energy conservation as well), and to analyse the functional structure of the set of equations in the case of a second viscous dissipation term in the energy equation. 


\subsection*{Acknowledgement} 
The authors gratefully acknowledge the suggestions of Prof. J\"urgen Rossmann regarding the regularity of the auxiliary div-curl problem in Lemma~\ref{lem:prop}. We also thank useful comments from  Prof. Ch\'erif Amrouche and Prof. Abner Salgado. 
\bibliographystyle{siam}
\bibliography{brinkman_dissipation_references}
\end{document}